\definecolor{trp}{rgb}{1,1,1}
\definecolor{red}{rgb}{1,0,.2}
\newtheorem{theorem}{Theorem}[section]
\theoremstyle{plain}
\newtheorem{cor}[theorem]{Corollary}
\newtheorem{lemma}[theorem]{Lemma}
\newtheorem{prop}[theorem]{Proposition}
\numberwithin{equation}{section}
\newcommand{\R}{\mathbb{R}}
\newcommand{\N}{\mathbb{N}}
\newcommand{\proj}{\mathrm{proj}}
\newcommand{\ii}{\mathbf{i}}
\newcommand{\mx}[1]{\underline{\underline{#1}}}
\definecolor{green}{rgb}{0.0,0.4,0.2}
\begin{document}
\title[Slicing the Sierpi\'nski gasket] {Slicing the Sierpi\'nski gasket}

\author{Bal\'azs B\'ar\'any}
\address{Bal\'azs B\'ar\'any, Department of Stochastics, Institute of Mathematics, Technical University of Budapest, 1521
Budapest, P.O.Box 91, Hungary} \email{balubsheep@gmail.com}

\author{Andrew Ferguson}
\address{Andrew Ferguson\\Department of Mathematics\\University of Bristol\\ University Walk\\Bristol\\BS8 1TW\\UK.} \email{andrew.ferguson@bris.ac.uk}

\author{K\'aroly Simon}
\address{K\'aroly Simon, Department of Stochastics, Institute of Mathematics, Technical University of Budapest, 1521
Budapest, P.O.Box 91, Hungary} \email{simonk@math.bme.hu}

 \thanks{ \indent
{\em Key words and phrases.} Hausdorff dimension, multifractal analysis, Sierpi\'nski gasket}

\begin{abstract}We investigate the dimension of intersections of the Sierpi\'nski gasket with lines.  Our first main result describes  a countable, dense set of angles that are exceptional for Marstrand's theorem.  We then provide a multifractal analysis for the set of points in the projection for which the associated slice has a prescribed dimension.
\end{abstract}
\date{\today}

\maketitle

\thispagestyle{empty}

\vspace{-0.7cm}

\section{Introduction and Statements}\label{sintro}

Let $\Delta\subset\R^2$ denote the Sierpi\'nski gasket,  i.e. the unique non-empty compact set satisfying
\[
\Delta=S_0(\Delta)\cup S_1(\Delta)\cup S_2(\Delta),
\]
where
\begin{equation}\label{esiernor}
S_0(x,y)=\left(\frac{1}{2}x,\frac{1}{2}y\right),\ S_1(x,y)=\left(\frac{1}{2}x+\frac{1}{2},\frac{1}{2}y\right),\ S_2(x,y)=\left(\frac{1}{2}x+\frac{1}{4},\frac{1}{2}y+\frac{\sqrt{3}}{4}\right).
\end{equation}

It is well known that $\dim_H\Delta=\dim_B\Delta=\frac{\log3}{\log2}=s$, where $\dim_H$ denotes the Hausdorff and $\dim_B$ denotes the box (or Minkowski) dimension. For the definition and basic properties of the box  and Hausdorff dimensions we refer the reader
to \cite{Fal2}.

We denote by $\proj_{\theta}$ the projection onto the line through the origin making angle $\theta$ with the $x$-axis.   For $a\in \proj_\theta(\Delta)$ we let $L_{\theta,a}=\{(x,y)\,:\,\proj_\theta(x,y)=a\}=\{(x,a+x\tan\theta)\,:\,x\in\mathbb{R}\}$.  The main purpose of this paper is to investigate the dimension theory of the slices $E_{\theta,a}=L_{\theta,a}\cap\Delta$. Since $\Delta$ is rotation and reflection invariant, we may assume without loss of generality that $\theta\in[0,\frac{\pi}{3})$.  In Proposition \ref{pdc1} we show that a dimension conservation principle holds: if $\nu_\theta$ denotes the projection of the normalised $\log(3)/\log(2)$-dimensional Hausdorff measure then for all $\theta\in [0,\pi)$ and $a\in{\rm proj}_\theta(\Delta)$ we have $\underline{d}_{\nu_{\theta}}(a)+\overline{\dim}_B E_{\theta,a}=s$, where $\underline{d}_{\nu_\theta}(a)$ denotes the lower local dimension of $\nu_\theta$  at $a$.  The analogous relationship between upper local dimension and lower box dimension is also proved.
 Furthermore, in Theorem \ref{ttyp} we prove that whenever $\tan\theta=\frac{\sqrt{3}p}{2q+p}$ for positive integers $p,q$, the direction $\theta $
 is exceptional in Marstrand's Theorem. More precisely,  the dimension of Lebesgue almost all slices is a constant strictly smaller than $s-1$ but the dimension for almost all slices with respect to the projected measure is another constant strictly greater than $s-1$.

Finally, we provide a multifractal analysis of the Hausdorff dimension of the slices $E_{\theta,a}$ for $\tan\theta=\frac{\sqrt{3}p}{2q+p}$ for positive integers $p,q$.  Furstenberg \cite{Fur} proved a dimension conservation principle for homogeneous sets, which in our setting corresponds to showing that
\begin{equation*}{\rm dim}_H(\Delta)=\sup\left\{\delta+{\rm dim}_H\{a\in\proj_\theta(\Delta)\,:\,{\rm dim}_H(E_{\theta,a})\geq \delta\}\right\}\end{equation*} i.e. any loss of dimension in the projection may be accounted for in the fibres $\{E_{\theta,a}\}_{a\in\proj_\theta(\Delta)}$. We remark that the results found in \cite{Fur} apply to a quite wide class of compact sets  $E\subset\mathbb{R}^n$ and for all linear maps $P:E\to\mathbb{R}^n$.
	In Theorem \ref{tspectra} we investigate the function \begin{equation*}\Gamma:\delta\mapsto {\rm dim}_H\{a\in\proj_\theta(\Delta)\,:\,{\rm dim}_H(E_{\theta,a})\geq \delta\}.\end{equation*} We prove that $\Gamma$ admits a multifractal description, in particular it is continuous, concave and may be represented as the Legendre transform of a pressure function.

For technical reasons we elect to prove our statements for the so-called right-angle Sierpi\'nski gasket $\Lambda$ which is the attractor of iterated function system
\begin{equation}\label{erightangle}
\Phi=\left\{F_0(x,y)=\left(\frac{x}{2},\frac{y}{2}\right),\ F_1(x,y)=\left(\frac{x}{2}+\frac{1}{2},\frac{y}{2}\right),\ F_2(x,y)=\left(\frac{x}{2},\frac{y}{2}+\frac{1}{2}\right)\right\},
\end{equation}
and intersections with lines of rational slope. There is a linear transformation $T$
\begin{equation}\label{etrans}
T=\left(
    \begin{array}{cc}
      1 & -\frac{\sqrt{3}}{3} \\
      0 & \frac{2\sqrt{3}}{3} \\
    \end{array}
  \right)
\end{equation}
which maps the Sierpi\'nski gasket into the right-angle Sierpi\'nsi gasket. Since an invertible linear transformation does not change the dimension of a set we state our results for the usual Sierpi\'nski gasket and for appropriate slopes. For the transformation see Figure \ref{fusutoright}.

\begin{figure}
  \includegraphics[width=150mm]{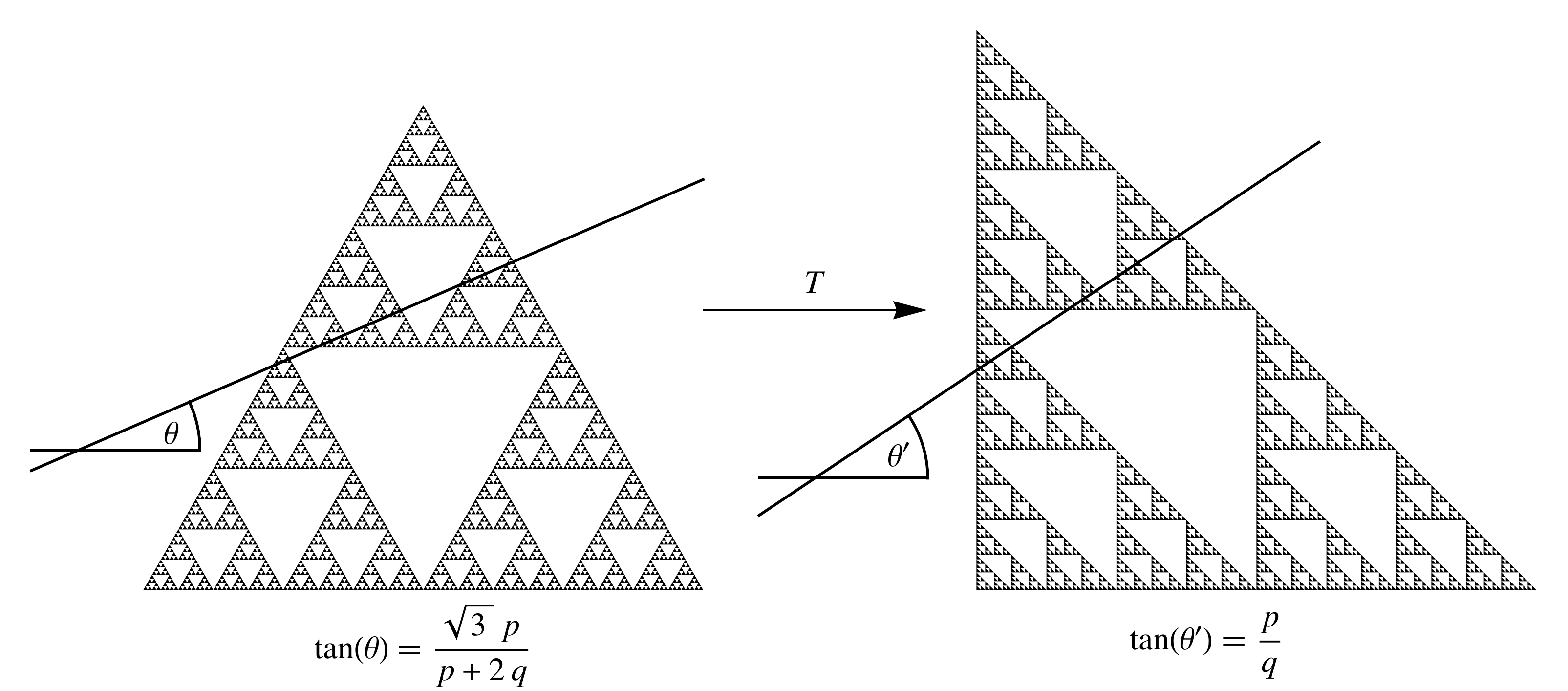}\\
  \caption{The transformation between the usual and right-angle Sierpi\'nski gasket.}\label{fusutoright}
\end{figure}

Denote by $\nu$ the unique self-similar measure satisfying
\[
\nu=\sum_{i=0}^2\frac{1}{3}\nu\circ S_i^{-1}.
\]

One may show that this measure is nothing more than the normalised $s$-dimensional Hausdorff measure restricted to $\Delta$.  We denote by $\nu_{\theta}$ the push-forward of $\nu$ under the projection $\proj_\theta$, i.e. $\nu_{\theta}=\nu\circ\proj_{\theta}^{-1}$.  Let $\Delta_{\theta}$ denote the projection of $\Delta$.

The description of typical slices is given by the following result of Marstrand (see \cite{Mar} or \cite[Theorem 10.11]{M}).
\begin{prop}[Marstrand]\label{pmar}
For Lebesgue almost every $\theta\in[0,\frac{\pi}{3})$ and $\nu_{\theta}$-almost all $a\in\Delta_{\theta}$
\[
\dim_B E_{\theta,a}=\dim_H E_{\theta,a}=s-1.
\]
\end{prop}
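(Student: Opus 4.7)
The plan is to deduce this statement from three classical ingredients: Marstrand's projection theorem for the measure $\nu$, the dimension conservation principle announced in Proposition \ref{pdc1}, and the Hausdorff-dimension lower bound from the classical Marstrand slicing theorem. The strategy is to split the claimed equalities $\dim_H E_{\theta,a} = \dim_B E_{\theta,a} = s-1$ into an upper bound routed through box dimension and a matching lower bound routed through a Frostman/energy argument.

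First, I would apply the projection theorem to $\nu$. Since $\dim_H\nu = s = \log 3/\log 2 > 1$, the standard $t$-energy argument (Mattila \cite{M}) shows that for Lebesgue almost every $\theta \in [0,\pi/3)$ the projected measure $\nu_\theta$ is absolutely continuous with respect to one-dimensional Lebesgue measure on $\proj_\theta(\R^2)$, with $L^2$ density. By Lebesgue differentiation, for $\nu_\theta$-almost every $a \in \Delta_\theta$ the density is finite and positive, so $\underline{d}_{\nu_\theta}(a) = \overline{d}_{\nu_\theta}(a) = 1$ at such points.

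Second, I would feed these local-dimension values into Proposition \ref{pdc1}. The identity $\underline{d}_{\nu_\theta}(a) + \overline{\dim}_B E_{\theta,a} = s$ immediately yields $\overline{\dim}_B E_{\theta,a} = s-1$, and the analogous identity for the upper local dimension gives $\underline{\dim}_B E_{\theta,a} = s-1$. Hence $\dim_B E_{\theta,a} = s-1$, which in turn gives the cheap upper bound $\dim_H E_{\theta,a} \le s-1$.

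For the matching lower bound $\dim_H E_{\theta,a} \ge s-1$, I would invoke the classical Marstrand slicing theorem \cite[Theorem 10.11]{M} applied to $\Delta$: since $0 < \mathcal{H}^s(\Delta) < \infty$ and $s>1$, for Lebesgue almost every $\theta$ one has $\dim_H E_{\theta,a} \ge s-1$ for $\mathcal{H}^1$-almost every $a \in \Delta_\theta$, and the absolute continuity from the first step converts this to $\nu_\theta$-almost every $a$. The main obstacle is therefore not the deduction itself, which is essentially bookkeeping once Proposition \ref{pdc1} is available, but rather the classical Hausdorff-dimension slicing lower bound, whose standard proof --- slicing a Frostman measure on $\Delta$ and applying Fubini to the $(s{-}1)$-energy integral in $\R^2$ --- is delicate, but for the purposes of this proposition I would take it as a black box from the cited references.
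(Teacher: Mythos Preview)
The paper does not actually prove this proposition: it is stated as a classical result of Marstrand, with a bare citation to \cite{Mar} and \cite[Theorem~10.11]{M} and no argument given. So there is no ``paper's own proof'' to compare against.

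Your argument is mathematically sound, but note two things. First, you are using Proposition~\ref{pdc1}, which in the paper is stated and proved \emph{after} Proposition~\ref{pmar}; this is not circular (Proposition~\ref{pdc1} does not rely on Proposition~\ref{pmar}), but it does mean your derivation is not the one the paper has in mind---the paper is simply quoting Marstrand's theorem as background and motivation, not deducing it from its own results. Second, the classical Marstrand slicing theorem as usually stated (e.g.\ in \cite{M}) only asserts the Hausdorff-dimension equality $\dim_H E_{\theta,a}=s-1$; the box-dimension statement does not come for free from that source. Your route through the absolute continuity of $\nu_\theta$ and Proposition~\ref{pdc1} is therefore a genuine (and correct) addition that justifies the $\dim_B$ part of the claim, which the paper leaves to the reader or to the cited reference. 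In that sense your write-up actually fills a small expository gap.
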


We define the (upper and lower) local dimension of a Borel measure $\eta$ at the point $x$ by
\[
\underline{d}_{\eta}(x)=\liminf_{r\rightarrow0}\frac{\log\eta(B_r(x))}{\log r},\ \overline{d}_{\eta}(x)=\limsup_{r\rightarrow0}\frac{\log\eta(B_r(x))}{\log r}.
\]

Manning and Simon proved a dimension conservation result for the Sierpi\'nski carpet, connecting the local dimension of the projected natural measure and the box dimension of the slices (see \cite[Proposition 4]{MS}). For the Sierpi\'nski gasket we state an analogous result.

\begin{prop}\label{pdc1}
For every $\theta\in(0,\frac{\pi}{3})$ and $a\in\Delta_{\theta}$
\begin{eqnarray}
\underline{d}_{\nu_{\theta}}(a)+\overline{\dim}_B E_{\theta,a}=s,\\
\overline{d}_{\nu_{\theta}}(a)+\underline{\dim}_B E_{\theta,a}=s.
\end{eqnarray}
\end{prop}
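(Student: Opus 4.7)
The plan is to emulate the Manning--Simon strategy from the Sierpi\'nski carpet case \cite{MS}: both quantities $\nu_\theta(B(a,r))$ and the box-counting function of $E_{\theta,a}$ are controlled, up to multiplicative constants depending on $\theta$, by the number of level-$n$ cylinders of $\Delta$ that the line $L_{\theta,a}$ meets. Fix $n\geq 1$ and, for $\ii \in \{0,1,2\}^n$, write $\Delta_\ii = S_\ii(\Delta)$: these $3^n$ triangles have diameter $2^{-n}$, satisfy $\nu(\Delta_\ii)=3^{-n}$, and intersect one another in at most a single point (hence on $\nu$-null sets). Set
\[
M_n(a) := \#\bigl\{\ii\in\{0,1,2\}^n \,:\, L_{\theta,a}\cap \Delta_\ii\neq\emptyset\bigr\}.
\]

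First I would verify that $M_n(a)$ pins down the box-counting behaviour of $E_{\theta,a}$ at scale $2^{-n}$. Since $\{\Delta_\ii : L_{\theta,a}\cap\Delta_\ii\neq\emptyset\}$ covers $E_{\theta,a}$ by sets of diameter $2^{-n}$, one has $N(E_{\theta,a},2^{-n})\leq M_n(a)$. A matching lower bound uses the open set condition: a positive fraction of the $\Delta_\ii$ hit by $L_{\theta,a}$ are well separated along the line (for $\theta\in(0,\pi/3)$ the line is transverse to the symmetry axes of the triangles), so any $2^{-n}$-cover of $E_{\theta,a}$ requires at least a fixed constant times $M_n(a)$ sets. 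Thus
\[
\overline{\dim}_B E_{\theta,a}=\limsup_{n\to\infty}\frac{\log M_n(a)}{n\log 2},\qquad \underline{\dim}_B E_{\theta,a}=\liminf_{n\to\infty}\frac{\log M_n(a)}{n\log 2}.
\]

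Next I would show $\nu_\theta\bigl(B(a,2^{-n})\bigr)\asymp 3^{-n} M_n(a)$, with implicit constants depending on $\theta$. For the lower bound, each $\Delta_\ii$ meeting $L_{\theta,a}$ projects to an interval $\proj_\theta(\Delta_\ii)$ of length $\asymp 2^{-n}$ that contains $a$, hence is contained in $B(a, C(\theta)\,2^{-n})$; summing the essentially disjoint contributions of these cylinders yields $\nu_\theta(B(a,C\,2^{-n}))\geq 3^{-n} M_n(a)$. For the upper bound, any cylinder with $\proj_\theta(\Delta_\ii)\cap B(a,2^{-n})\neq\emptyset$ lies inside a tube of width $O(2^{-n})$ about $L_{\theta,a}$; the triangular geometry of the cylinders and the transversality of $L_{\theta,a}$ force only $O(1)$ such cylinders to cluster about each cylinder genuinely pierced by the line, giving $\nu_\theta(B(a,2^{-n}))\leq C(\theta)\,3^{-n} M_n(a)$. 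Taking logarithms and dividing by $-n\log 2$ then yields
\[
\frac{\log\nu_\theta(B(a,2^{-n}))}{\log(2^{-n})} = s - \frac{\log M_n(a)}{n\log 2}+O(1/n).
\]

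Passing to $\liminf_{n\to\infty}$ on both sides (and invoking the standard interpolation between dyadic and arbitrary radii, valid since $\log 2^{-(n+1)}/\log 2^{-n}\to 1$) delivers $\underline d_{\nu_\theta}(a)+\overline{\dim}_B E_{\theta,a}=s$, and the symmetric passage to $\limsup$ gives the second identity. The hard step is the upper bound in the measure-to-cylinder comparison: one must rule out excess counting from cylinders whose projections touch $B(a,2^{-n})$ but which $L_{\theta,a}$ narrowly misses. This requires a careful geometric argument using that, for $\theta\in(0,\pi/3)$ bounded away from the degenerate directions, the number of level-$n$ triangles inside an $O(2^{-n})$-tube around any line but not crossed by the line itself is uniformly bounded in $n$ and $a$.
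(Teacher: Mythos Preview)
Your approach is essentially identical to the paper's: define the cylinder count $M_n(a)$ (which the paper calls $\sharp G_n(\theta,a)$), show it captures the box dimension of $E_{\theta,a}$ up to bounded multiplicative constants, and then establish $\nu_\theta(B(a,c\,2^{-n}))\asymp 3^{-n} M_n(a)$ via a ``bad set'' comparison. The paper carries this out the same way, choosing an explicit $C(\theta)=\tfrac{1}{2}\min\{\tan\theta,\cos(\theta+\tfrac{\pi}{6})\}$ (positive for each fixed $\theta\in(0,\tfrac{\pi}{3})$), and observes that every ``bad'' cylinder---one inside the tube of width $\asymp 2^{-n}$ but not crossed by $L_{\theta,a}$---shares a vertex with a ``good'' cylinder, while each cylinder has at most six neighbours; hence $\sharp R_n(\theta,a)\leq 6\,\sharp G_n(\theta,a)$.

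One point needs correction. Your final sentence asserts that the number of level-$n$ triangles inside the $O(2^{-n})$-tube but not crossed by the line is \emph{uniformly bounded in $n$ and $a$}. This is false: for a generic slice $\sharp R_n$ grows like $M_n(a)$ itself (of order $(3/2)^n$), not like $O(1)$. What is uniformly bounded is the \emph{ratio} $\sharp R_n/M_n(a)$, exactly as you correctly wrote earlier (``only $O(1)$ such cylinders cluster about each cylinder genuinely pierced by the line''). The adjacency argument delivers precisely that ratio bound, and that is all the proof needs; just make sure the two formulations in your write-up agree.
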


Feng and Hu proved in \cite[Theorem 2.12]{FH} that every self-similar measure is exact dimensional. That is, the lower and upper local-dimension coincide and this common value is almost everywhere constant.  Moreover, Young proved in \cite{Y} that this constant is the Hausdorff dimension of the measure. In other words, if $\eta$ is self-similar then
\[
\text{for $\eta$-almost all } x,\ \underline{d}_{\eta}(x)=\overline{d}_{\eta}(x)=d_{\eta}(x)=\dim_H\eta=\inf\left\{\dim_HA:\eta(A)=1\right\}.
\]
Using the above results we deduce.

\begin{cor}\label{cbox}
For every $\theta\in(0,\frac{\pi}{3})$ and $\nu_{\theta}$-almost every $a\in\Delta_{\theta}$ we have
\[
\dim_BE_{\theta,a}=s-\dim_H\nu_{\theta}\geq s-1.
\]
\end{cor}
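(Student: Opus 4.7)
The plan is to combine Proposition \ref{pdc1} with the exact-dimensionality of the projected measure $\nu_\theta$. First I would verify that $\nu_\theta$ is itself a self-similar measure on the real line. Writing each $S_i(z)=\frac{1}{2}z+c_i$ with $z,c_i\in\R^2$, the linearity of $\proj_\theta$ yields $\proj_\theta\circ S_i=\tau_i\circ\proj_\theta$, where $\tau_i(t)=\frac{1}{2}t+\proj_\theta(c_i)$ is an affine contraction of $\R$; pushing the invariance relation satisfied by $\nu$ forward by $\proj_\theta$ then gives
\[
\nu_\theta=\sum_{i=0}^{2}\frac{1}{3}\,\nu_\theta\circ\tau_i^{-1}.
\]

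Next, the Feng--Hu theorem \cite[Theorem 2.12]{FH} applied to this self-similar measure (no separation condition is required) yields exact dimensionality, while Young's theorem \cite{Y} identifies the common local dimension with $\dim_H\nu_\theta$. Substituting $\underline{d}_{\nu_\theta}(a)=\overline{d}_{\nu_\theta}(a)=\dim_H\nu_\theta$ into the two identities of Proposition \ref{pdc1} forces
\[
\overline{\dim}_B E_{\theta,a}=\underline{\dim}_B E_{\theta,a}=s-\dim_H\nu_\theta
\]
for $\nu_\theta$-almost every $a\in\Delta_\theta$, so the box dimension exists and equals this common value. The inequality $\dim_B E_{\theta,a}\geq s-1$ then follows at once from the trivial bound $\dim_H\nu_\theta\leq 1$, since $\nu_\theta$ is supported on $\Delta_\theta\subset\R$.

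I do not anticipate a genuine obstacle; the argument is essentially an assembly of the cited results. The only sensitive point is checking that the pushforward $\nu_\theta$ really is self-similar, which is immediate from the linearity of $\proj_\theta$, and that Feng--Hu applies without a separation hypothesis, which it does since \cite{FH} explicitly covers the overlapping case.
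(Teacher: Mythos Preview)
Your proposal is correct and follows essentially the same route as the paper: the paper simply states that the corollary is deduced ``using the above results,'' namely Proposition~\ref{pdc1} together with the Feng--Hu exact-dimensionality theorem and Young's identification of the local dimension with $\dim_H\nu_\theta$. You have filled in precisely the details the paper leaves implicit, including the verification that $\nu_\theta$ is self-similar and the trivial bound $\dim_H\nu_\theta\leq 1$ for the final inequality.
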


Liu, Xi and Zhao \cite{LXZ} encoded the Box dimensions of a slice
through the Sierpi\'nski carpet for lines of rational slope in terms of the Lyapunov exponent of a random matrix product.  They then used this coding to show that for a fixed rational direction $\theta$  the Box and Hausdorff dimensions of a slice coincide and are constant for Lebesgue-almost all translations.  Moreover, this constant $\alpha(\theta )$ was shown to satisfy $\alpha(\theta )\leq s-1$, with this inequality being conjectured to be strict. This conjecture was proved by Manning and Simon
 \cite[Theorem 9]{MS}.

 We prove an analogous result for the Sierpi\'nski gasket.  In addition, we will show
 that the $\nu_\theta$-typical dimension of a slice is strictly bigger than $s-1$.
\begin{theorem}\label{ttyp}
Let $p,q\in\N$ and let us suppose that $\tan\theta=\frac{\sqrt{3}p}{2q+p}$ and $\theta\in(0,\frac{\pi}{3})$. Then there exist constants $\alpha(\theta), \beta(\theta)$ depending only on $\theta$ such that
\begin{enumerate}
  \item for Lebesgue almost all $a\in\Delta_{\theta}$\label{ttyp1}
\[
\alpha(\theta):=\dim_B E_{\theta,a}=\dim_H E_{\theta,a}<s-1,
\]
  \item for $\nu_{\theta}$-almost all $a\in\Delta_{\theta}$\label{ttyp2}
\[
\beta(\theta):=\dim_B E_{\theta,a}=\dim_H E_{\theta,a}>s-1.
\]
\end{enumerate}
\end{theorem}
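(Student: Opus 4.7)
The plan is to transfer the problem to the right-angle gasket $\Lambda$ via the linear conjugacy $T$ (which is dimension preserving), observing that the angles with $\tan\theta=\sqrt3 p/(2q+p)$ correspond under $T$ to lines of rational slope $p/q$ in $\Lambda$. Thereafter I would fix a rational slope and analyse intersections of $\Lambda$ with lines $L_{\theta,a}$ of that slope symbolically: for each $F_i\in\Phi$, the preimage $F_i^{-1}(L_{\theta,a}\cap F_i([0,1]^2))$ is again a line of the same slope, but with a new intercept. The intercept, reduced modulo an appropriate lattice determined by $p,q$, takes only finitely many values, so the induced dynamics on intercepts gives a topological Markov chain on a finite alphabet $\mathcal A$, and the choice of which sub-square $F_i([0,1]^2)$ the line enters (either none, one or two of the three) yields a substitution matrix cocycle $A\colon\mathcal A\to M_{|\mathcal A|}(\N)$.

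Using this coding, the number $N_n(a)$ of level-$n$ cylinders of $\Lambda$ that are hit by $L_{\theta,a}$ is comparable to $\|A(\omega_n)\cdots A(\omega_1)\mathbf 1\|$ where $\omega=\omega(a)$ is the Markov-chain trajectory determined by $a$. Hence both box dimensions of $E_{\theta,a}$ equal $\lim\frac1{n\log 2}\log N_n(a)$ whenever this limit exists, and it does for a.e.\ $\omega$ with respect to any ergodic shift-invariant measure by Kingman's subadditive ergodic theorem. I would then identify two such natural measures: the measure $\mu_L$ on $\mathcal A^{\N}$ whose push-forward on intercepts is Lebesgue (which describes Lebesgue-typical $a\in\Delta_\theta$) and the measure $\mu_\nu$ coming from the symbolic coding of the self-similar measure $\nu$ (governing $\nu_\theta$-typical $a$). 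The resulting Lyapunov exponents divided by $\log 2$ define $\alpha(\theta)$ and $\beta(\theta)$, and the same cocycle argument together with a Frostman-type estimate along slices yields $\dim_H E_{\theta,a}=\dim_B E_{\theta,a}$ for a.e.\ $a$ in either sense.

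For part (\ref{ttyp2}) the inequality $\beta(\theta)>s-1$ is essentially painless: Corollary~\ref{cbox} gives $\beta(\theta)=s-\dim_H\nu_\theta$, and for these rational directions $\proj_\theta$ has genuine, positive-measure overlap between the cylinders $\proj_\theta\circ S_i$ (equivalently, the IFS generating $\nu_\theta$ on the projection line satisfies an exact-overlap relation induced by $F_0+F_1\equiv F_0+F_2$ after the right-angle change of coordinates). A standard upper bound for the dimension of a self-similar measure with non-trivial overlap then gives $\dim_H\nu_\theta<1$, whence $\beta(\theta)>s-1$.

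The main obstacle is the Lebesgue-almost-sure inequality $\alpha(\theta)<s-1$, the analogue of Manning--Simon's theorem for the carpet. My approach would be variational: interpreting $\alpha(\theta)\log 2$ as the top Lyapunov exponent of the matrix cocycle under $\mu_L$, I would compare it to the value $(s-1)\log 2$ one would obtain if every level-$n$ cylinder hit by $L_{\theta,a}$ contributed the ``expected'' number $3^n/2^n$ of continuations. Because the substitution matrix $A$ is strictly sub-stochastic in a precise sense determined by the geometry of the three sub-squares of $[0,1]^2$ meeting a fixed line of slope $p/q$ (some intercept classes see only one sub-square, others see two, and never all three along the entire orbit), a Perron--Frobenius / strict-concavity argument applied to the pressure of the induced random matrix product yields the strict inequality. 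Making this last step rigorous --- in particular, showing that the inequality is strict rather than merely non-strict --- is the delicate technical heart of the argument and will likely require an explicit inspection of the Markov chain on intercepts and the non-degeneracy of its stationary distribution.
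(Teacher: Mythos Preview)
Your overall architecture matches the paper's: the transfer to the right-angle gasket $\Lambda$, the matrix-cocycle encoding of $\sharp G_n(\theta,a)$ via $\mx{A}_0,\mx{A}_1$, Kingman's theorem producing constant exponents $\alpha(\theta),\beta(\theta)$ under the Lebesgue- and $\nu_\theta$-related ergodic measures, and the appeal to the Manning--Simon argument for the strict inequality $\alpha(\theta)<s-1$ are all exactly what the paper does. Two steps, however, are genuinely incomplete in your sketch.

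\textbf{The equality $\dim_H E_{\theta,a}=\dim_B E_{\theta,a}$.} You say this follows from ``a Frostman-type estimate along slices,'' but the cocycle only controls the \emph{number} of level-$n$ cylinders meeting $E_{\theta,a}$, not their distribution, so there is no obvious way to manufacture a Frostman measure on each slice. The paper instead embeds the problem in the $2$-torus: with $P(x,y)=(x,\,(px-qy)\bmod 1)$ the image $P(\Lambda)$ is compact and $T_2\times T_2$-invariant, and each finite union $F_{\theta,a}=\bigcup_{b\in\Gamma_a}E_{\theta,b}$ becomes a vertical fibre of $P(\Lambda)$. A lemma of Ledrappier (via Kenyon--Peres) gives $\dim_H=\dim_B$ for almost every fibre with respect to any $T_2$-invariant measure on the base, and the primitivity of some product $\mx{A}_{\xi_1}\cdots\mx{A}_{\xi_{n_0}}$ is then used to pass from $F_{\theta,a}$ back to the individual slice $E_{\theta,a}$. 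None of this apparatus is present in your outline.

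\textbf{The inequality $\beta(\theta)>s-1$.} You invoke an ``exact-overlap relation'' in the projected IFS $\{t/2,\,t/2+\tfrac12,\,t/2-\tfrac{p}{2q}\}$ and a ``standard upper bound'' to force $\dim_H\nu_\theta<1$. But even when exact overlaps exist, the similarity dimension here is $\log 3/\log 2>1$, and a single coincidence among $n$-th level cylinders need not push the random-walk entropy below $\log 2$; no off-the-shelf bound gives the strict inequality you need. The paper's route is different and, crucially, runs in the opposite logical direction: it \emph{derives} (\ref{ttyp2}) from (\ref{ttyp1}). Knowing $\alpha(\theta)<s-1$, one finds for large $n$ words $(\xi_1,\dots,\xi_n)$ with $\underline{e}\,\mx{A}_{\xi_1}\cdots\mx{A}_{\xi_n}\underline{p}$ deviating from $(3/2)^n$ by an exponential factor; feeding this into the entropy formula
\[
\dim_H\nu_\theta=-\lim_{n\to\infty}\frac{1}{n\log 2}\sum_{\xi_1,\dots,\xi_n}\frac{1}{3^n}\,\underline{e}\,\mx{A}_{\xi_1}\cdots\mx{A}_{\xi_n}\underline{p}\;\log\!\left(\frac{1}{3^n}\,\underline{e}\,\mx{A}_{\xi_1}\cdots\mx{A}_{\xi_n}\underline{p}\right)
\]
shows the cylinder masses of $\widetilde\nu_\theta$ are not uniform, so the entropy is strictly below $\log 2$, hence $\dim_H\nu_\theta<1$ and $\beta(\theta)=s-\dim_H\nu_\theta>s-1$ by Corollary~\ref{cbox}. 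Your proposal treats the two strict inequalities as independent, whereas in the paper the second is a consequence of the first.
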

A simple calculation reveals that the tangent of the set of angles in this theorem is equal
$\mathbb{Q}'=\left\{0<\sqrt[]{3}\frac{m}{n}<\sqrt[]{3}:
\mbox{ if } m \mbox{ is odd then }
n\mbox{ is odd }
\right\}$.

We remark that Theorem \ref{ttyp} shares a similarity with a result of Feng and Sidorov \cite{FS}[Theorem 3 and Proposition 4] where the  Lebesgue typical local dimension is computed for a class of self-similar measures.  The authors show that if an algebraic condition holds then the Lebesgue typical local dimension is strictly greater than one.  By Proposition \ref{pdc1} the Theorem above may be rephrased in terms of local dimensions being either strictly greater or less than one.

In \cite{Fur}, Furstenberg introduced and proved a dimension conservation formula \cite[Definition 1.1]{Fur} for homogeneous fractals (for example homotheticly self-similar sets). As a consequence of Theorem \ref{ttyp}(\ref{ttyp2}) and Corollary \ref{cbox} we state the special case of Furstenberg's dimension conservation formula for the Sierpi\'nski gasket and rational slopes.

Furstenberg in \cite[Theorem 6.2]{Fur} stated the result as an inequality but combining the result as stated with the  Marstrand Slicing Theorem (see \cite{Mar2} or \cite[Theorem 5.8]{Fal}) we see that

\begin{lemma}[Marstrand Slicing Theorem]\label{lslicing}
Let $F$ be any subset of $\R^2$, and let $E$ be a subset of the $y$-axis. If $\dim_H(F\cap L_{\theta,a})\geq t$ for all $a\in E$, then $\dim_HF\geq t + \dim_H E$.
\end{lemma}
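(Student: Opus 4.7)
The plan is to derive the slicing inequality from Frostman's lemma and the mass-distribution principle, following the classical route due to Marstrand. First I would fix arbitrary $s<t$ and $u<\dim_H E$; it suffices to prove $\dim_H F\geq s+u$ for every such pair, since we may then let $s\nearrow t$ and $u\nearrow \dim_H E$. Because $\dim_H(F\cap L_{\theta,a})\geq t>s$ for every $a\in E$, we have $\mathcal{H}^s_\infty(F\cap L_{\theta,a})>0$ (recall $\mathcal{H}^s_\infty$ and $\mathcal{H}^s$ vanish on the same sets). Stratify $E=\bigcup_{n\geq 1}E_n$ with $E_n=\{a\in E:\mathcal{H}^s_\infty(F\cap L_{\theta,a})\geq 1/n\}$; countable stability of Hausdorff dimension yields some $n_0$ with $\dim_H E_{n_0}>u$. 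Applying Frostman's lemma to a Borel hull of $E_{n_0}$ of the same dimension produces a finite non-zero Borel measure $\mu$ supported on this hull, satisfying $\mu([a-r,a+r])\leq C_1 r^u$ uniformly in $a$ and $r$.

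Next I would build the transverse mass distribution
\[
\nu(A)=\int \mathcal{H}^s_\infty(A\cap L_{\theta,a})\,d\mu(a)
\]
on Borel $A\subset\R^2$ and verify two properties. For positivity, $\nu(F)\geq \mu(E_{n_0})/n_0>0$. For the upper bound, if $U\subset\R^2$ has diameter $d$ then the slice $U\cap L_{\theta,a}$ has diameter at most $d$, so $\mathcal{H}^s_\infty(U\cap L_{\theta,a})\leq d^s$; moreover the set of parameters $a$ for which $L_{\theta,a}\cap U\neq\emptyset$ is contained in an interval on the $y$-axis of length at most $(1+\tan\theta)d$, hence of $\mu$-mass at most $C_\theta d^u$. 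Multiplying gives $\nu(U)\leq C d^{s+u}$. The mass-distribution principle then yields $\mathcal{H}^{s+u}(F)\geq \nu(F)/C>0$, so $\dim_H F\geq s+u$, as needed.

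The main obstacle is the measure-theoretic bookkeeping in the second step: one must check that $a\mapsto \mathcal{H}^s_\infty(A\cap L_{\theta,a})$ is Borel measurable in $a$ for each Borel $A\subset\R^2$, so that $\nu$ is well defined and the upper estimate carries over from single ``test'' sets $U$ to Borel sets. This is handled by a standard monotone-class argument, starting from open sets where the map is lower semicontinuous and using that Hausdorff content is monotone under increasing unions. A related but mild subtlety is that the level sets $E_n$ need not be Borel a priori; one circumvents this by passing to a Borel hull of the same Hausdorff dimension, which does no harm in Frostman's lemma. Once this measurability is in place, the two elementary geometric bounds above furnish the conclusion.
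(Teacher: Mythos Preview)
The paper does not prove this lemma; it is quoted as a classical result with references to Marstrand's original paper and to Falconer's textbook. Your argument via Frostman's lemma, Hausdorff content on the slices, and the mass--distribution principle is precisely the classical route taken in those sources, so there is no divergence of approach to report.

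One step does need tightening. Passing to a Borel hull $\widetilde E_{n_0}\supset E_{n_0}$ of the same Hausdorff dimension and applying Frostman to $\widetilde E_{n_0}$ produces a measure $\mu$ concentrated on $\widetilde E_{n_0}$, not on $E_{n_0}$; a priori $\mu(E_{n_0})$ may vanish, and with it your lower bound $\nu(F)\geq \mu(E_{n_0})/n_0$. So the hull trick does \emph{not} ``do no harm'' here. The clean fix is to assume $F$ (and $E$) Borel or analytic: then $a\mapsto\mathcal H^{s}_{\infty}(F\cap L_{\theta,a})$ is Borel measurable, $E_{n_0}$ is itself Borel, and Frostman applies directly to $E_{n_0}$ with no hull needed. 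This regularity hypothesis is present in the cited references and is certainly met in the paper's single use of the lemma (the proof of Corollary~\ref{cfurst}, where $F=\Delta$ is compact). With that amendment your argument is correct.
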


\begin{cor}[Furstenberg]\label{cfurst}
Let $p,q\in\N$ be and let us suppose that $\tan\theta=\frac{\sqrt{3}p}{2q+p}$ and $\theta\in(0,\frac{\pi}{3})$. Then the map $\proj_{\theta}$ satisfies the dimension conservation formula \cite[Definition 1.1]{Fur} at the value $\beta(\theta)$, i.e.
\begin{equation}\label{efurst}
\beta(\theta)+\dim_H\left\{a\in\Delta_{\theta}:\dim_H E_{\theta,a}\geq\beta(\theta)\right\}=s.
\end{equation}
\end{cor}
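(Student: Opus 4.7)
The plan is to sandwich $\dim_H A$ from above and below, where
\[
A := \{a \in \Delta_\theta : \dim_H E_{\theta,a} \geq \beta(\theta)\},
\]
and to conclude that $\dim_H A = s - \beta(\theta)$, which is precisely \eqref{efurst}. Everything needed has already been set up: Theorem~\ref{ttyp}(\ref{ttyp2}), Corollary~\ref{cbox} and Lemma~\ref{lslicing}, so the corollary is a short formal consequence rather than a fresh calculation.

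First I would pin down $\dim_H \nu_\theta$. Corollary~\ref{cbox} asserts that $\dim_B E_{\theta,a} = s - \dim_H \nu_\theta$ for $\nu_\theta$-almost every $a$, whereas Theorem~\ref{ttyp}(\ref{ttyp2}) asserts $\dim_B E_{\theta,a} = \beta(\theta)$ on the same type of full-measure set; comparing the two identities yields $\dim_H \nu_\theta = s - \beta(\theta)$. The same theorem also supplies $\dim_H E_{\theta,a} = \beta(\theta)$ for $\nu_\theta$-almost every $a$, so the $\nu_\theta$-full-measure set on which this holds is contained in $A$, giving $\nu_\theta(A) = 1$. Using the standard characterisation $\dim_H \nu_\theta = \inf\{\dim_H B : \nu_\theta(B) = 1\}$ recalled in the introduction, this immediately produces the lower bound
\[
\dim_H A \geq \dim_H \nu_\theta = s - \beta(\theta).
\]

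For the matching upper bound I would invoke the Marstrand Slicing Theorem. By the very definition of $A$, $\dim_H(\Delta \cap L_{\theta,a}) \geq \beta(\theta)$ for every $a \in A$. Applying Lemma~\ref{lslicing} to $F = \Delta$ with the parameter set $E = A$ (after the trivial rigid-motion identification that aligns the projection axis for $\proj_\theta$ with the $y$-axis of the lemma, which preserves Hausdorff dimension of both $\Delta$ and $A$) yields
\[
s = \dim_H \Delta \geq \beta(\theta) + \dim_H A,
\]
so $\dim_H A \leq s - \beta(\theta)$. Combining the two bounds gives \eqref{efurst}.

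There is really no substantive obstacle left in this corollary—all of the hard work has been packaged into Theorem~\ref{ttyp}(\ref{ttyp2}) and Corollary~\ref{cbox}. The only point requiring a moment's attention is the switch between $\dim_B$ (used in Corollary~\ref{cbox}) and $\dim_H$ (used in the definition of $A$) on the $\nu_\theta$-typical slice, but Theorem~\ref{ttyp}(\ref{ttyp2}) explicitly equates them $\nu_\theta$-almost surely, which bridges the two formulations cleanly.
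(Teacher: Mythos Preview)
Your proof is correct and follows essentially the same route as the paper's own argument: the lower bound comes from the fact that $A$ has full $\nu_\theta$-measure (via Theorem~\ref{ttyp}(\ref{ttyp2})) together with $\dim_H\nu_\theta=s-\beta(\theta)$ (via Corollary~\ref{cbox}), and the upper bound is Lemma~\ref{lslicing}. The paper merely compresses the lower-bound chain into one displayed inequality, but the content is identical.
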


\begin{proof}
\begin{equation*}
\begin{split}\dim_H\left\{a\in\Delta_{\theta}:\dim_H E_{\theta,a}\geq\beta(\theta)\right\}& \geq \dim_H\left\{a\in\Delta_{\theta}:\dim_B E_{\theta,a} =\dim_H E_{\theta,a}=\beta(\theta)\right\}  \\
& \geq \dim_H \nu_{\theta}=s-\beta(\theta).\end{split}
\end{equation*}
The other direction follows from Lemma \ref{lslicing}.
\end{proof}

We remark that the above argument also shows that
\[
\beta(\theta)+\dim_H\left\{a\in\Delta_{\theta}:\dim_H E_{\theta,a}=\beta(\theta)\right\}=s.
\]

The other main goal of this paper is to analyse the behaviour of the function $\Gamma:\delta\mapsto\dim_H\left\{a\in\Delta_{\theta}:\dim_H E_{\theta,a}\geq\delta\right\}$ under the assumption that $\tan\theta=\frac{\sqrt{3}p}{2q+p}$, where $p,q\in\N$ and $(p,q)=1$. For the analysis we use two matrices generated naturally by the projection and the IFS $\left\{S_0, S_1, S_2\right\}$. For simplicity, we illustrate these matrices for the right-angle gasket.

Denote the angle $\theta$ projection of $\Lambda$ to the $y$-axis by $\Lambda_{\theta}$. Then $\Lambda_{\theta}=[-\tan\theta,1]$. Consider the projected IFS of $\Phi$, i.e. \[
\phi=\left\{f_0(t)=\frac{t}{2},f_1(t)=\frac{t}{2}+\frac{1}{2},f_2(t)=\frac{t}{2}-\frac{p}{2q}\right\}.
\]
By straightforward calculations and \cite[Theorem 2.7.]{NW} we see that $\phi$ satisfies the finite type condition and therefore, the weak separation property.

Let us divide $\Lambda_{\theta}$ into $p+q$ equal intervals such that $I_k=\left[1-\frac{k}{q},1-\frac{k-1}{q}\right]$ for $k=1,\dots,p+q$. Moreover, let us divide $I_k$ for every $k$ into two equal parts. Namely, let $I_k^0=\left[1-\frac{k}{q},1-\frac{2k-1}{2q}\right]$ and $I_k^1=\left[1-\frac{2k-1}{2q},1-\frac{k-1}{q}\right]$. Let us define the $(p+q)\times(p+q)$ matrices $\mx{A}_0, \mx{A}_1$ in the following way:
\begin{equation}\label{eprojmatr}
(\mx{A}_n)_{i,j}=\sharp\left\{k\in\left\{0,1,2\right\}:f_k(I_j)=I^n_i\right\}.
\end{equation}
For example, see the case $\frac{p}{q}=\frac{2}{3}$ of the construction in Figure \ref{fex} and the matrices are
\[
\mx{A}_0=
\left(\begin{array}{ccccc}
    1 & 0 & 0 & 0 & 0 \\
    0 & 0 & 1 & 0 & 0 \\
    0 & 1 & 0 & 0 & 1 \\
    0 & 1 & 0 & 1 & 0 \\
    0 & 0 & 0 & 1 & 0 \\
\end{array}\right)\text{ and }
\mx{A}_1=\left(
           \begin{array}{ccccc}
             0 & 1 & 0 & 0 & 0 \\
             1 & 0 & 0 & 1 & 0 \\
             1 & 0 & 1 & 0 & 0 \\
             0 & 0 & 1 & 0 & 1 \\
             0 & 0 & 0 & 0 & 1 \\
           \end{array}
         \right).
\]

\begin{figure}
  \includegraphics[height=71mm]{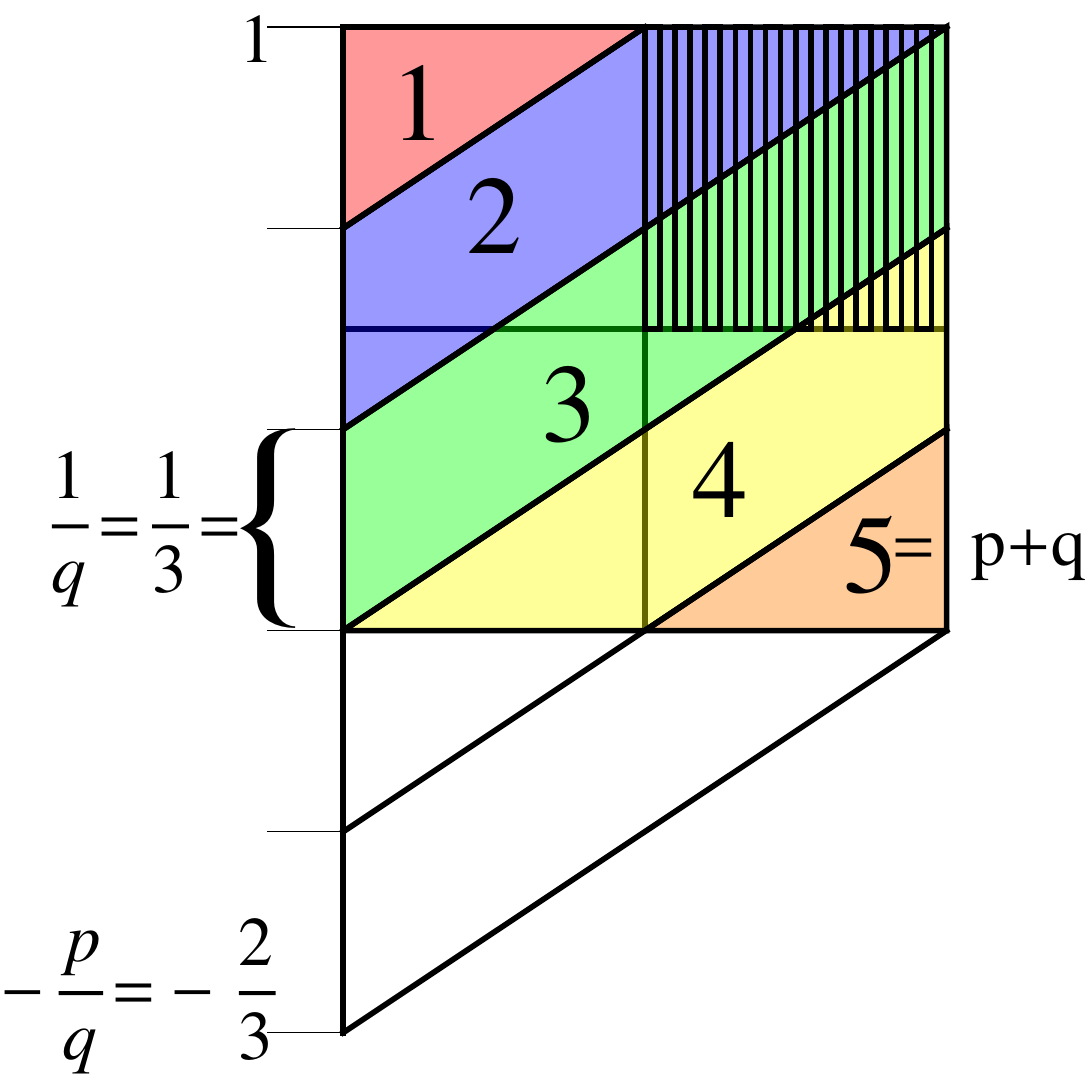}\ \ \ \ \ \ \ \ \ \ \includegraphics[height=70mm]{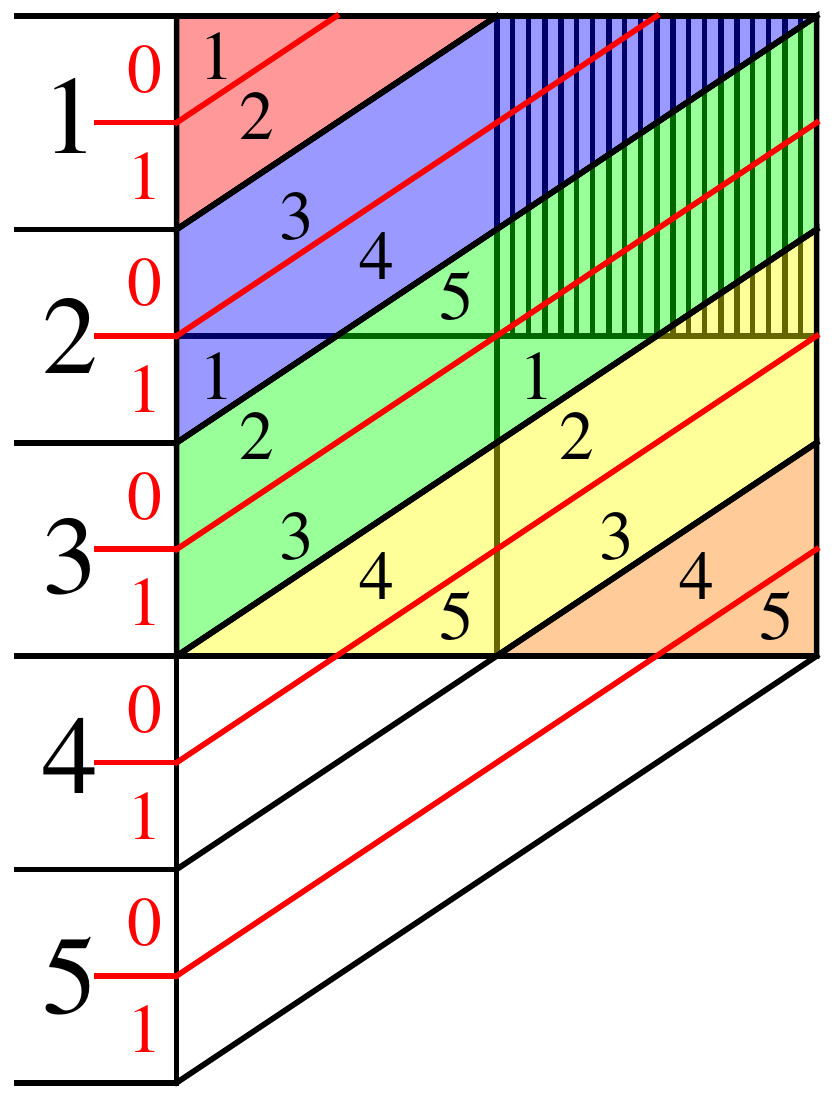}\\
  \caption{Graph of the projection and construction of matrices $\mx{A}_0, \mx{A}_1$ in the case $\frac{p}{q}=\frac{2}{3}.$}\label{fex}
\end{figure}

We note that by some simple calculations the matrices $\mx{A}_0, \mx{A}_1$ can be written in the form
\begin{multline}\label{eprojmatr2}
(\mx{A}_n)_{i,j}=1\text{ if and only if }2i+1-n\equiv j \mod p+q\text{ or }\\2q+p\geq2i+n-1\geq q+1\text{ and }2i+1-n-q\equiv j\mod p+q
\end{multline}
for $n=0,1$ and $1\leq i,j\leq p+q$. Using these matrices we are able to  explicitly express the quantities $\alpha(\theta),\beta(\theta)$.

\begin{prop}\label{pcalc}
Let $p,q\in\N$ be and let us suppose that $\tan\theta=\frac{\sqrt{3}p}{2q+p}$ and $\theta\in(0,\frac{\pi}{3})$. Moreover, let $\alpha(\theta)$ and $\beta(\theta)$ be as in Theorem \ref{ttyp}. Then
\begin{equation*}
\begin{split}\alpha(\theta)&=\frac{1}{\log2}\lim_{n\rightarrow\infty}\frac{1}{n}\sum_{\xi_1,\dots,\xi_n=0}^1\frac{1}{2^n}\log\underline{e}\mx{A}_{\xi_1}\cdots\mx{A}_{\xi_n}\underline{e},\\
\beta(\theta)& =\frac{1}{\log2}\lim_{n\rightarrow\infty}\frac{1}{n}\sum_{\xi_1,\dots,\xi_n=0}^1\frac{1}{3^n}\underline{e}\mx{A}_{\xi_1}\cdots\mx{A}_{\xi_n}\underline{p}\log\left(\underline{e}\mx{A}_{\xi_1}\cdots\mx{A}_{\xi_n}\underline{p}\right),
\end{split}\end{equation*}
where $\underline{e}=(1,\cdots,1)$ and $(\mx{A}_0+\mx{A}_1)\underline{p}=3\underline{p}$.
\end{prop}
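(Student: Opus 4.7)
The plan is to interpret the matrix products $\mx{A}_{\xi_1}\cdots\mx{A}_{\xi_n}$ as counts of $\phi$-cylinders landing in prescribed dyadic sub-intervals of $\Lambda_\theta$, and then to deduce both formulas by applying pointwise ergodic / subadditive results at typical points and passing to Ces\`aro averages. I work with the right-angle gasket $\Lambda$ throughout, which costs nothing by the invariance of dimension under the conjugacy $T$.

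The geometric interpretation is established by induction on $n$: the entry $(\mx{A}_{\xi_1}\cdots\mx{A}_{\xi_n})_{i,j}$ equals the number of words $(k_1,\dots,k_n)\in\{0,1,2\}^n$ for which $f_{k_1}\circ\cdots\circ f_{k_n}(I_j)$ coincides with the level-$n$ sub-interval of $I_i$ obtained by successively halving along the binary path $(\xi_1,\dots,\xi_n)$. The base case $n=1$ is the definition \eqref{eprojmatr}, and the inductive step uses that each $f_k$ is a similarity of ratio $1/2$, so halving and the action of $f_k$ commute. For a point $a\in\Lambda_\theta$ with address $(k(a),\xi_1(a),\xi_2(a),\dots)$, this identifies the number of level-$n$ cylinders of $\Phi$ whose projection contains $a$ as
\[
N_n(a)=\bigl(\mx{A}_{\xi_1(a)}\cdots\mx{A}_{\xi_n(a)}\,\underline{e}\bigr)_{k(a)}.
\]
An analogous iteration, starting from $\nu_\theta(I_i)=p_i$ (with $\underline{e}\,\underline{p}=1$) and using $\nu_\theta=\tfrac{1}{3}\sum_{k=0}^2\nu_\theta\circ f_k^{-1}$, yields the $\nu_\theta$-measure of the same sub-interval as $\tfrac{1}{3^n}(\mx{A}_{\xi_1(a)}\cdots\mx{A}_{\xi_n(a)}\,\underline{p})_{k(a)}$; the eigen-equation $(\mx{A}_0+\mx{A}_1)\underline{p}=3\underline{p}$ is precisely the step-consistency requirement.

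For $\alpha(\theta)$, since level-$n$ cylinders of $\Phi$ have diameter of order $2^{-n}$, the quantity $N_n(a)$ is comparable up to a fixed constant to the covering number $\mathcal{N}(E_{\theta,a},2^{-n})$, so by Theorem \ref{ttyp}(\ref{ttyp1}), $\lim_{n\to\infty}\frac{\log N_n(a)}{n\log 2}=\alpha(\theta)$ for Lebesgue-almost every $a$. Under Lebesgue measure the sequence $(\xi_i(a))_{i\geq 1}$ is i.i.d.\ Bernoulli$(1/2)$, and Kingman's subadditive ergodic theorem applied to the non-negative subadditive sequence $\log\underline{e}\,\mx{A}_{\xi_1}\cdots\mx{A}_{\xi_n}\,\underline{e}$ yields the same limit in $L^1$; irreducibility of the cocycle $\{\mx{A}_0,\mx{A}_1\}$ ensures that individual row-entries and the full scalar $\underline{e}\,\mx{A}\cdots\underline{e}$ differ only by a uniform multiplicative constant, so the a.s.\ value coincides with the Bernoulli average. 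Dividing by $\log 2$ produces the first formula.

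For $\beta(\theta)$, Feng--Hu exact dimensionality together with Young's theorem gives, for $\nu_\theta$-almost every $a$,
\[
\dim_H\nu_\theta=\lim_{n\to\infty}\frac{-\log\nu_\theta(B_{2^{-n}}(a))}{n\log 2}=\frac{\log 3}{\log 2}-\lim_{n\to\infty}\frac{\log\underline{e}\,\mx{A}_{\xi_1(a)}\cdots\mx{A}_{\xi_n(a)}\,\underline{p}}{n\log 2},
\]
after comparing the ball $B_{2^{-n}}(a)$ with its enclosing level-$n$ sub-interval (the overhead from neighbouring cylinders being absorbed in a uniformly bounded multiplicative factor). Using Corollary \ref{cbox}, which gives $\dim_H\nu_\theta=s-\beta(\theta)$, together with $s\log 2=\log 3$ isolates $\beta(\theta)=\lim_{n\to\infty}\tfrac{1}{n\log 2}\log\underline{e}\,\mx{A}_{\xi_1(a)}\cdots\mx{A}_{\xi_n(a)}\,\underline{p}$ for $\nu_\theta$-a.e.\ $a$. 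Dominated convergence (the integrand is uniformly $O(1)$ in $n$) then upgrades this pointwise limit to the $L^1$ limit, and writing the expectation as $\sum_\xi\tfrac{1}{3^n}\underline{e}\,\mx{A}_{\xi_1}\cdots\mx{A}_{\xi_n}\,\underline{p}\cdot\log\underline{e}\,\mx{A}_{\xi_1}\cdots\mx{A}_{\xi_n}\,\underline{p}$ delivers the second formula. The principal technical difficulty throughout is the uniform comparability between individual row-entries and the full scalar $\underline{e}\,\mx{A}\cdots\underline{v}$ for $\underline{v}\in\{\underline{e},\underline{p}\}$; this rests on a primitivity property of $\{\mx{A}_0,\mx{A}_1\}$ that must be extracted from \eqref{eprojmatr2} (equivalently, from the weak separation property noted earlier), after which all the limit exchanges are routine.
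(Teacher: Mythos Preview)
Your approach is essentially the same as the paper's. The paper's proof simply cites Proposition~\ref{pLebtypbox} (which proves the $\alpha(\theta)$ formula via the subadditive ergodic theorem for the Bernoulli$(1/2)$ measure, exactly as you do) and the proof of Theorem~\ref{ttyp}(\ref{ttyp2}) (which identifies $\beta(\theta)=s-\dim_H\nu_\theta$ and computes $\dim_H\nu_\theta$ as the entropy average over the measure $\eta([\xi_1,\dots,\xi_n])=\tfrac{1}{3^n}\underline{e}\mx{A}_{\xi_1}\cdots\mx{A}_{\xi_n}\underline{p}$). The only cosmetic difference is that for $\beta(\theta)$ the paper invokes the Volume lemma to obtain the entropy formula directly, whereas you reach the same formula via exact dimensionality (Feng--Hu/Young) followed by dominated convergence; the primitivity input you flag is precisely Proposition~\ref{pmain}.
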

The proof of Proposition \ref{pcalc} will follow from the proof of Theorem \ref{ttyp}.  In order to obtain further information on the nature of the function $\Gamma(\delta)$ we will employ the theory of multifractal analysis for products of non-negative matrices \cite{F1,F2,FL2}.  Let $P(t)$ denote the pressure function which is defined as
\begin{equation}\label{emxpressure}
P(t)=\lim_{n\rightarrow\infty}\frac{1}{n}\log\sum_{\xi_1,\dots,\xi_n=0}^1\left(\underline{e}\mx{A}_{\xi_1}\cdots\mx{A}_{\xi_n}\underline{e}\right)^t
\end{equation}
and let us define \[
b_{\min}=\lim_{t\rightarrow-\infty}\frac{P(t)}{t},\ b_{\max}=\lim_{t\rightarrow\infty}\frac{P(t)}{t}.
\]

\begin{prop}\label{cspectra}
Let $p,q\in\N$ and let us suppose that $\tan\theta=\frac{\sqrt{3}p}{2q+p}$ and $\theta\in(0,\frac{\pi}{3})$. Then
\begin{enumerate}
    \item $\dim_H\left\{a\in\Delta_{\theta}:\dim_BE_{\theta,a}=\alpha\right\}=\inf_t\left\{-\alpha t+\frac{P(t)}{\log2}\right\}$ for $b_{\min}\leq\alpha\leq b_{\max}$.\label{tspectra1}
    \item $\dim_H\left\{a\in\Delta_{\theta}:d_{\nu_{\theta}}(a)=\alpha\right\}=\inf_t\left\{-(s-\alpha) t+\frac{P(t)}{\log2}\right\}$ for $s-b_{\max}\leq\alpha\leq s-b_{\min}$.\label{tspectra4}
\end{enumerate}
Both of the functions are concave and continuous.
\end{prop}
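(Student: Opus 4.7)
The plan is to reduce the multifractal spectra in Proposition \ref{cspectra} to a multifractal analysis of products of the non-negative matrices $\mx{A}_0, \mx{A}_1$, and then invoke the existing machinery of Feng \cite{F1,F2} and Feng--Lau \cite{FL2}. The reduction should already be in place after the proofs of Theorem \ref{ttyp} and Proposition \ref{pcalc}: each $a\in\Delta_\theta$ admits a symbolic address $\xi=(\xi_1,\xi_2,\ldots)\in\{0,1\}^\N$ recording the $n$-th dyadic sub-interval $I_k^{\xi_n}$ containing $a$, and by the definition (\ref{eprojmatr})--(\ref{eprojmatr2}) the number of generation-$n$ cylinders of $\Phi$ whose slice hits $a$ is comparable, up to a universal multiplicative constant, to $\underline{e}\,\mx{A}_{\xi_1}\cdots\mx{A}_{\xi_n}\,\underline{e}$. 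Since these cylinders have diameter of order $2^{-n}$, whenever the limit exists
\[
\dim_B E_{\theta,a}=\lim_{n\to\infty}\frac{1}{n\log 2}\log\bigl(\underline{e}\,\mx{A}_{\xi_1}\cdots\mx{A}_{\xi_n}\,\underline{e}\bigr).
\]

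Given this identification, part (1) becomes the statement that the Hausdorff-dimension spectrum of the Lyapunov-type quantity $\frac{1}{n}\log\underline{e}\,\mx{A}_{\xi_1}\cdots\mx{A}_{\xi_n}\,\underline{e}$ on the symbolic space $\{0,1\}^\N$ is the Legendre transform of the pressure (\ref{emxpressure}). That is precisely what \cite{F1,F2,FL2} provide: the spectrum is finite and equals $\inf_t\{-\alpha t+P(t)/\log 2\}$ on $[b_{\min},b_{\max}]$. To transport this from symbolic to geometric space I would use that the coding map $\pi\colon\{0,1\}^\N\to\Delta_\theta$ is Lipschitz with bounded multiplicity --- a consequence of the finite type/weak separation property for $\phi$ noted just after (\ref{eprojmatr}) --- so that Hausdorff dimension is preserved.

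Part (2) then follows from part (1) together with the dimension conservation of Proposition \ref{pdc1}. Whenever $d_{\nu_\theta}(a)$ exists, the two identities of Proposition \ref{pdc1} force $\dim_B E_{\theta,a}$ to exist and to equal $s-d_{\nu_\theta}(a)$, so
\[
\{a\in\Delta_\theta:d_{\nu_\theta}(a)=\alpha\}=\{a\in\Delta_\theta:\dim_B E_{\theta,a}=s-\alpha\},
\]
and substituting $\alpha\mapsto s-\alpha$ into part (1) yields part (2), with the admissible interval $[s-b_{\max},s-b_{\min}]$ being the image of $[b_{\min},b_{\max}]$. Finally, concavity and continuity of both spectra are immediate: they are Legendre transforms of the convex function $P(t)/\log 2$, so concavity is automatic, continuity in the interior follows from concavity, and continuity at the endpoints from lower semicontinuity of $P$.

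The main obstacle, I expect, lies in verifying the hypotheses under which the machinery of \cite{F1,F2,FL2} delivers the full Legendre-transform identity: one needs an appropriate irreducibility (or primitivity) condition on the pair $\mx{A}_0,\mx{A}_1$, together with enough regularity of $P(t)$ so that its Legendre transform is attained and computes Hausdorff (not merely packing) dimension. The combinatorial rule (\ref{eprojmatr2}) and the existence of the positive right eigenvector $\underline{p}$ with $(\mx{A}_0+\mx{A}_1)\underline{p}=3\underline{p}$ strongly suggest these properties hold uniformly in $(p,q)$ with $(p,q)=1$, but the verification for arbitrary such pairs is the nontrivial technical piece. A secondary subtlety is that weak separation only guarantees bounded multiplicity of the symbolic coding, so one must absorb dimension-zero exceptional sets of multiply-coded points when translating spectra between the symbolic and geometric settings.
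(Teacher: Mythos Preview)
Your proposal is correct and aligns with the paper's treatment, with one mild structural difference worth noting. The paper's short proof runs in the opposite order: it first obtains part~(\ref{tspectra4}) directly from the Feng--Lau multifractal formalism for the self-similar measure $\nu_\theta$ (citing \cite[Theorems 1.1, 1.2]{FL}), and then deduces part~(\ref{tspectra1}) from part~(\ref{tspectra4}) via the dimension conservation of Proposition~\ref{pdc1}. You instead prove part~(\ref{tspectra1}) first via the matrix-product formalism of \cite{F2} and then deduce part~(\ref{tspectra4}) from it by the same dimension conservation. Both routes are valid and the paper in fact carries out your route for part~(\ref{tspectra1}) as an alternative derivation in Section~\ref{stspectra}, using Lemma~\ref{lmxbox2} and \cite[Theorem~1.1]{F2}.

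The two obstacles you flag are precisely the ones the paper handles: the primitivity hypothesis needed for \cite{F2} is supplied by Proposition~\ref{pmain} (there exists a finite word $(\xi_1,\dots,\xi_{n_0})$ with $\mx{A}_{\xi_1}\cdots\mx{A}_{\xi_{n_0}}>0$), and the symbolic-to-geometric transfer is justified by the estimate~(\ref{epmain}), which gives $\overline{\dim}_B(\Lambda_\theta\setminus\widetilde{\Lambda}_\theta)=0$ for the set of points whose coding never sees a strictly positive block. So your anticipated difficulties are real but already resolved by the paper's combinatorial lemmas.
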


\begin{proof}
Proposition \ref{cspectra}(\ref{tspectra4}) follows immediately from \cite[Theorem 1.1]{FL}, \cite[Theorem 1.2]{FL}. Proposition \ref{cspectra}(\ref{tspectra1}) follows from combining the dimension conservation principle Proposition \ref{pdc1} with Proposition \ref{cspectra}(\ref{tspectra4}).
\end{proof}

We note that Proposition \ref{cspectra}(\ref{tspectra1}) may derived by applying \cite{F2} to the matrices $\mx{A}_0, \mx{A}_1$.  We describe this derivation in Section \ref{stspectra}.

\begin{theorem}\label{tspectra}
Let $p,q\in\N$ and let us suppose that $\tan\theta=\frac{\sqrt{3}p}{2q+p}$ and $\theta\in(0,\frac{\pi}{3})$. Then
\begin{enumerate}
    \item$\Gamma(\delta)=\dim_H\left\{a\in\Delta_{\theta}:\dim_H E_{\theta,a}\geq\delta\right\}=\inf_{t>0}\left\{-\delta t+\frac{P(t)}{\log2}\right\}$ if $b_{\max}\geq\delta>\alpha(\theta)$ and $\Gamma(\delta)=1$ if $\delta\leq\alpha(\theta)$. The function $\Gamma$ is decreasing and continuous.\label{tspectra2}
    \item For every $b_{\max}\geq\delta\geq\alpha(\theta)$, $\chi(\delta)=\dim_H\left\{a\in\Delta_{\theta}:\dim_H E_{\theta,a}=\delta\right\}=\inf_{t>0}\left\{-\delta t+\frac{P(t)}{\log2}\right\}$. The function $\chi$ is decreasing and continuous.\label{tspectra3}
\end{enumerate}
\end{theorem}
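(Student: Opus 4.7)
The plan is to combine the dimension conservation principle of Proposition~\ref{pdc1}, the multifractal formalism already established in Proposition~\ref{cspectra}, and the slice-measure constructions underlying Theorem~\ref{ttyp}.

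For the upper bound on $\Gamma(\delta)$ when $\delta > \alpha(\theta)$, the first step is to use Proposition~\ref{pdc1}: since $\dim_H E_{\theta,a} \leq \overline{\dim}_B E_{\theta,a} = s - \underline{d}_{\nu_\theta}(a)$, we have the inclusion
\[
\{a \in \Delta_\theta : \dim_H E_{\theta,a} \geq \delta\} \subseteq \{a \in \Delta_\theta : \underline{d}_{\nu_\theta}(a) \leq s - \delta\}.
\]
A standard multifractal covering argument in the spirit of \cite{F1,F2,FL2}, applied to the submultiplicative cocycle $\underline{e}\mx{A}_{\xi_1}\cdots\mx{A}_{\xi_n}\underline{e}$ entering $P(t)$, bounds the Hausdorff dimension of the right-hand set by $\inf_{t>0}\{-\delta t + P(t)/\log 2\}$. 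For $\delta \leq \alpha(\theta)$, Theorem~\ref{ttyp}(\ref{ttyp1}) asserts that Lebesgue-a.e.\ $a \in \Delta_\theta$ satisfies $\dim_H E_{\theta,a} = \alpha(\theta) \geq \delta$, and since $\Delta_\theta \subset \mathbb{R}$ has positive Lebesgue measure we obtain $\Gamma(\delta) = 1$.

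For the lower bound, fix $\delta \in (\alpha(\theta), b_{\max}]$ and a subgradient $t = t(\delta) > 0$ at which the concave Legendre transform is realised. By the matrix-product thermodynamic formalism of \cite{F1,F2,FL,FL2}, there is a Gibbs-type measure $\mu_t$ on $\{0,1\}^{\mathbb{N}}$ whose push-forward $\mu_t^*$ onto $\Delta_\theta$ is exact dimensional with $\dim_H \mu_t^* = \inf_{t'>0}\{-\delta t' + P(t')/\log 2\}$ and $d_{\nu_\theta}(a) = s - \delta$ for $\mu_t^*$-a.e.\ $a$. Proposition~\ref{pdc1} already delivers $\dim_B E_{\theta,a} = \delta$ on a full-$\mu_t^*$-measure set. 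The key remaining step is to upgrade this box bound to $\dim_H E_{\theta,a} \geq \delta$. Following the argument for Theorem~\ref{ttyp}(\ref{ttyp2}), I would lift $\mu_t$ to a measure $\tilde\mu_t$ on $\{0,1,2\}^{\mathbb{N}}$ whose fibre disintegration over the $\{0,1\}$-projection respects the transition matrices $\mx{A}_0,\mx{A}_1$. The conditional measures are then supported on the slices $E_{\theta,a}$, and submultiplicativity along the random matrix product yields a Frostman condition of exponent $\delta$ for $\mu_t^*$-a.e.\ $a$; hence $\Gamma(\delta) \geq \dim_H \mu_t^* = \inf_{t'>0}\{-\delta t' + P(t')/\log 2\}$.

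Part~(\ref{tspectra3}) follows at once: $\chi(\delta) \leq \Gamma(\delta)$ is trivial, and on the support of $\mu_t^*$ the identity $\dim_B E_{\theta,a} = \delta$ from Proposition~\ref{pdc1} combined with the Frostman bound of the previous paragraph gives $\dim_H E_{\theta,a} = \delta$. Monotonicity of $\Gamma$ and $\chi$ is immediate because $\delta \mapsto -\delta t$ is decreasing for $t > 0$; continuity on $(\alpha(\theta), b_{\max}]$ follows because an infimum of affine functions of $\delta$ is concave, and concave functions are continuous on the interior of their domain. The main obstacle throughout is the box-to-Hausdorff upgrade on each individual slice, since in principle $\dim_H E_{\theta,a} < \dim_B E_{\theta,a}$; the matrix product / finite type structure inherited from the projected IFS $\phi$ provides just enough approximate self-similarity in the fibre direction to make the conditional-measure construction work.
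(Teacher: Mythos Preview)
Your overall architecture matches the paper's: the case $\delta\le\alpha(\theta)$ via Theorem~\ref{ttyp}(\ref{ttyp1}), a covering argument for the upper bound, and a Gibbs measure $\mu_t$ for the lower bound. The paper routes the upper bound directly through $\{a:\underline{\dim}_B E_{\theta,a}\ge\delta\}$ and Lemma~\ref{lmxbox2} rather than through the local-dimension inclusion you write, but the two are equivalent via Proposition~\ref{pdc1}, so no real difference there.

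The substantive divergence is in the box-to-Hausdorff upgrade on the slices. You correctly identify this as the crux, and you propose to handle it by lifting $\mu_t$ to $\{0,1,2\}^{\mathbb N}$, disintegrating over the projection, and extracting a Frostman exponent $\delta$ on the conditional measures from ``submultiplicativity along the random matrix product''. The paper does \emph{not} do this: it invokes Proposition~\ref{phdbdcoin}, which in turn rests on Ledrappier's lemma (Lemma~\ref{lboxhd}) that for a $T_2\times T_2$-invariant compact set and any $T_2$-invariant base measure, Hausdorff and box dimensions of the fibres agree almost everywhere. Since the Gibbs measure $\mu_t$ is shift-invariant and, by Proposition~\ref{pmain} and the Gibbs property, satisfies the positivity hypothesis~(\ref{eassumption}), Proposition~\ref{phdbdcoin} applies directly and gives $\dim_H E_{\theta,a}=\dim_B E_{\theta,a}$ for $\mu_t$-a.e.\ $a$ in one stroke.

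Your disintegration route is a legitimate alternative in principle, but the justification you sketch is too thin as written. Submultiplicativity of $n\mapsto\underline{e}\mx{A}_{\xi_1}\cdots\mx{A}_{\xi_n}\underline{e}$ controls products from above, whereas a Frostman bound on the conditional measure requires controlling the conditional mass of $n$-cylinders from above, i.e.\ comparing the joint measure of a cylinder in $\{0,1,2\}^n$ against the base measure of its image in $\{0,1\}^n$; this needs the two-sided Gibbs estimate plus a genuine analysis of how the lift distributes mass among the $(\mx{A}_{\xi_1}\cdots\mx{A}_{\xi_n})_{k,j}$ preimages, not submultiplicativity per se. It can be made to work (this is the Ledrappier--Young/Feng--Hu style of argument), but you would be reproving a fair chunk of what Lemma~\ref{lboxhd} already packages. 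The paper's shortcut via Proposition~\ref{phdbdcoin} is both shorter and avoids this unproven step.
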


For an example of the function $\delta\mapsto\dim_H\left\{a\in\Delta_{\theta}:\dim_H E_{\theta,a}=\delta\right\}$ with $\tan\theta=\frac{\sqrt{3}}{3}$ in the usual Sierpi\'nski gasket case, see Figure \ref{fspect}.

\begin{figure}
  \includegraphics[width=80mm]{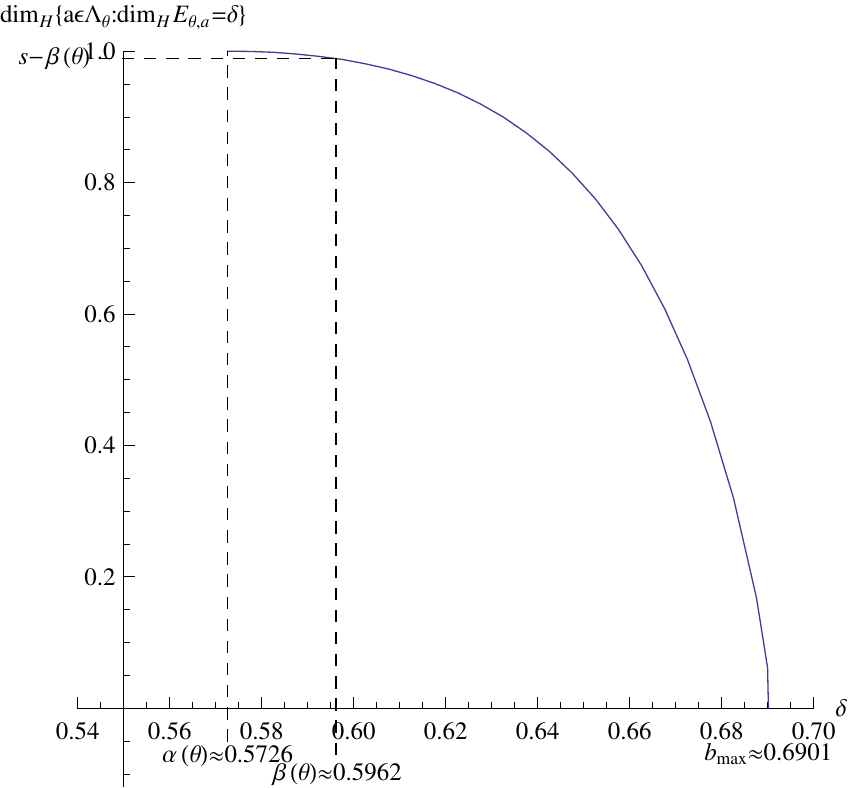}\\
  \caption{The graph of the function $\delta\mapsto\dim_H\left\{a\in\Delta_{\theta}:\dim_H E_{\theta,a}=\delta\right\}$ of the case $\frac{p}{q}=1$.}\label{fspect}
\end{figure}

The organisation of the paper is as follows: in Section \ref{sprop} we prove \ref{pdc1}.  Section \ref{sttyp} contains proof of Proposition \ref{ttyp}.  Finally, in Section \ref{stspectra} we prove Theorem \ref{tspectra}.

\section{Proof of Proposition \ref{pdc1}}\label{sprop}

In this section we prove Proposition \ref{pdc1}.  The method is adapted from \cite[Proposition 4]{MS} where an analogous result is proved for the Sierpi\'nski carpet.

We first introduce notation that will be fixed for the remainder of the paper. Let $S_0, S_1, S_2$ be as in (\ref{esiernor}), moreover let $\Sigma=\left\{0,1,2\right\}^{\N}$ and $\Sigma^*=\bigcup_{n=0}^{\infty}\left\{0,1,2\right\}^n$. Write $\sigma:\Sigma\mapsto\Sigma$ for the left shift operator. Moreover, let $\Pi:\Sigma\mapsto\Delta$ the natural projection. That is, for every $\ii=(i_1i_2\cdots)\in\Sigma$
\[
\Pi(\ii)=\lim_{n\rightarrow\infty}S_{i_1}\circ S_{i_2}\circ\cdots\circ S_{i_n}(0).
\]
Let $\mu$ be the equally distributed Bernoulli measure on $\Sigma$. That is, for every $\underline{i}\in\Sigma^*$ the measure of $[\underline{i}]=\left\{\ii:\ii=\underline{i}\omega\right\}$ is $\mu([\underline{i}])=3^{-\left|\underline{i}\right|}$, where $\left|\underline{i}\right|$ denotes the length of $\underline{i}$. Then $\nu=\Pi^*\mu=\mu\circ\Pi^{-1}$.

For simplicity we denote by $\Delta_{i_1\cdots i_n}=S_{i_1}\circ\cdots\circ S_{i_n}(\Delta)$. Let us call the $n$'th level ``good sets'' of $a\in\Delta_{\theta}$ the set of $(i_1\cdots i_n)$ such that $\Delta_{i_1\cdots i_n}$ intersects the set $E_{\theta,a}$. More precisely,

\begin{equation}\label{egoodset}
G_n(\theta,a)=\left\{(i_1\cdots i_n):\Delta_{i_1\cdots i_n}\cap E_{\theta,a}\neq\emptyset\right\}.
\end{equation}

\begin{lemma}\label{lbdgs}
For every $\theta\in[0,\frac{\pi}{3})$ and $a\in\Delta_{\theta}$
\[
\underline{\dim}_B E_{\theta,a}=\liminf_{n\rightarrow\infty}\frac{\log\sharp G_n(\theta,a)}{n\log2}\text{ and } \overline{\dim}_B E_{\theta,a}=\limsup_{n\rightarrow\infty}\frac{\log\sharp G_n(\theta,a)}{n\log2},
\]
where $\sharp G_n(\theta,a)$ denotes the cardinality of $G_n(\theta,a)$.
\end{lemma}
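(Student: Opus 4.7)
The plan is to bracket $\sharp G_n(\theta,a)$ between two constant multiples of the box-counting function of $E_{\theta,a}$ at scale $2^{-n}$, so that after taking $\frac{1}{n\log 2}\log(\cdot)$ and $\liminf$ (respectively $\limsup$) the multiplicative constants disappear. Let $N_\varepsilon(E_{\theta,a})$ denote the least number of sets of diameter at most $\varepsilon$ needed to cover $E_{\theta,a}$; the box dimensions of $E_{\theta,a}$ are then $\liminf$ and $\limsup$ of $\log N_{2^{-n}}(E_{\theta,a})/(n\log 2)$.

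First I would observe that for every word $(i_1,\ldots,i_n)$ the cylinder $\Delta_{i_1\cdots i_n}=S_{i_1}\circ\cdots\circ S_{i_n}(\Delta)$ is a similar copy of $\Delta$ with contraction ratio $2^{-n}$, so it has diameter $2^{-n}\,\mathrm{diam}(\Delta)$. Since every point of $E_{\theta,a}$ lies in some level-$n$ cylinder that necessarily belongs to $G_n(\theta,a)$, the family $\{\Delta_{i_1\cdots i_n}\}_{(i_1,\ldots,i_n)\in G_n(\theta,a)}$ is a cover of $E_{\theta,a}$ by sets of diameter $2^{-n}\,\mathrm{diam}(\Delta)$. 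This yields the easy direction
\[
N_{2^{-n}\,\mathrm{diam}(\Delta)}(E_{\theta,a})\le \sharp G_n(\theta,a).
\]

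For the opposite inequality, the key geometric fact is a uniform bounded-multiplicity property: there exists a constant $M=M(\Delta)$, independent of $n$, $\theta$ and $a$, such that any set of diameter at most $2^{-n}$ meets at most $M$ distinct level-$n$ cylinders. Indeed, level-$n$ cylinders have pairwise disjoint interiors (two different cylinders can meet only at a vertex), all have the same shape and the same diameter $2^{-n}\,\mathrm{diam}(\Delta)$, so a packing-type count bounds the number of them that a set of this diameter can intersect by a constant depending only on $\Delta$. Now take an optimal cover $\mathcal{U}$ of $E_{\theta,a}$ by $N_{2^{-n}}(E_{\theta,a})$ sets of diameter at most $2^{-n}$. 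Every cylinder indexed by $G_n(\theta,a)$ contains at least one point of $E_{\theta,a}$, hence meets some $U\in\mathcal{U}$, and each $U$ meets at most $M$ such cylinders; therefore
\[
\sharp G_n(\theta,a)\le M\cdot N_{2^{-n}}(E_{\theta,a}).
\]

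Combining the two bounds gives $N_{c\cdot 2^{-n}}(E_{\theta,a})\le \sharp G_n(\theta,a)\le M\cdot N_{2^{-n}}(E_{\theta,a})$ with $c=\mathrm{diam}(\Delta)$, so that $\log\sharp G_n(\theta,a)$ and $\log N_{2^{-n}}(E_{\theta,a})$ differ by an additive $O(1)$. Dividing by $n\log 2$ and passing to $\liminf$ and $\limsup$ as $n\to\infty$ kills the constants and produces both identities of the lemma simultaneously. There is no serious obstacle; the only point that needs a little care is the bounded-multiplicity constant $M$, which is a standard consequence of the open set condition for the gasket together with the fact that all level-$n$ pieces are translates of a common scaled copy of $\Delta$.
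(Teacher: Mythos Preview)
Your proof is correct and follows essentially the same approach as the paper: both arguments sandwich $\sharp G_n(\theta,a)$ between constant multiples of the box-counting number at scale $2^{-n}$, using that the level-$n$ cylinders in $G_n(\theta,a)$ cover $E_{\theta,a}$ for one inequality and a bounded-multiplicity (packing) argument for the other. The paper phrases the covering by intervals on the line and gives an explicit constant $\lceil 4\sqrt{3}(2+\pi)/3\rceil$, but the structure of the argument is the same as yours.
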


\begin{proof}
Let us denote  the minimal number of intervals with length $r$ covering the set $E_{\theta,a}$ by $N_r(\theta,a)$. It is easy to see that
\begin{equation}\label{ebll}
N_{2^{-n}}(\theta,a)\leq\sharp G_n(\theta,a).
\end{equation}

On the other hand, for a minimal cover of $E_{\theta,a}$ with intervals of side length $2^{-n}$ every such interval will intersect an element of $G_n(\theta,a)$.  Further, every element of $G_n(\theta,a)$ will intersect some element of this minimal cover. Finally, we observe that for every interval with side length $2^{-n}$ there are at most $\left\lceil\frac{4\sqrt{3}(2+\pi)}{3}\right\rceil$ cylinders in $G_n(\theta,a)$ which images under $\Pi$ intersect the interval. Hence,
\begin{equation}\label{ebul}
\sharp G_n(\theta,a)\leq\left\lceil\frac{4\sqrt{3}(2+\pi)}{3}\right\rceil N_{2^{-n}}(\theta,a).
\end{equation}

The equations (\ref{ebll}) and (\ref{ebul}) imply the statement of the lemma.
\end{proof}

\begin{proof}[Proof of Proposition \ref{pdc1}]
Let $\theta\in(0,\frac{\pi}{3})$ and let us take a point $a\in\Delta_{\theta}$. Take the $C(\theta)2^{-n}$ neighbourhood of $a$, where $C(\theta)=\frac{1}{2}\min\left\{\tan\theta,\cos(\theta+\frac{\pi}{6})\right\}$. Then
\[
\nu_{\theta}(B_{C(\theta)2^{-n}}(a))=\nu(B_{\cos\theta C(\theta)2^{-n}}(L_{\theta,a}))\geq\nu\left(\bigcup_{\underline{i}\in G_{n-c(\theta)}}\Delta_{\underline{i}}\right)=3^{-n+c(\theta)}\sharp G_{n-c(\theta)}(\theta,a),
\]
where $c(\theta)=\frac{\log\left(\cos\theta C(\theta)\right)}{\log2}$. Taking logarithms and dividing by $-n\log2$ yields
\[
\frac{\log(\nu_{\theta}(B_{C(\theta)2^{-n}}(a)))}{-n\log2}\leq\frac{(n-c(\theta))\log3}{n\log2}+\frac{\log\sharp G_{n-c(\theta)}(\theta,a)}{-n\log2}.
\]
Taking limit inferior and limit superior and using Lemma \ref{lbdgs} we obtain
\begin{equation}\label{edcub}
\begin{split}
&\underline{d}_{\nu_{\theta}}(a)+\overline{\dim}_B E_{\theta,a}\leq s,\\
&\overline{d}_{\nu_{\theta}}(a)+\underline{\dim}_B E_{\theta,a}\leq s.
\end{split}
\end{equation}

For the reverse inequality we have to introduce the so called ``bad'' sets which do not intersect $E_{\theta,a}$ but intersect its neighbourhood. That is,
\[
R_n(\theta,a)=\left\{(i_1\cdots i_n):\Delta_{i_1\cdots i_n}\cap E_{\theta,a}=\emptyset\text{ and }\Delta_{i_1\cdots i_n}\cap B_{\cos\theta C(\theta)2^{-n}}(L_{\theta,a})\neq\emptyset\right\}.
\]
Then
\[
\nu_{\theta}(B_{C(\theta)2^{-n}}(a))=\nu(B_{\cos\theta C(\theta)2^{-n}}(L_{\theta,a}))\leq3^{-n}\left(\sharp R_n(\theta,a)+\sharp G_n(\theta,a)\right).
\]
It is enough to prove that $\sharp R_n(\theta,a)$ is less than or equal to $\sharp G_n(\theta,a)$ up to a multiplicative constant.

\begin{figure}
\begin{center}
  \includegraphics[width=10cm]{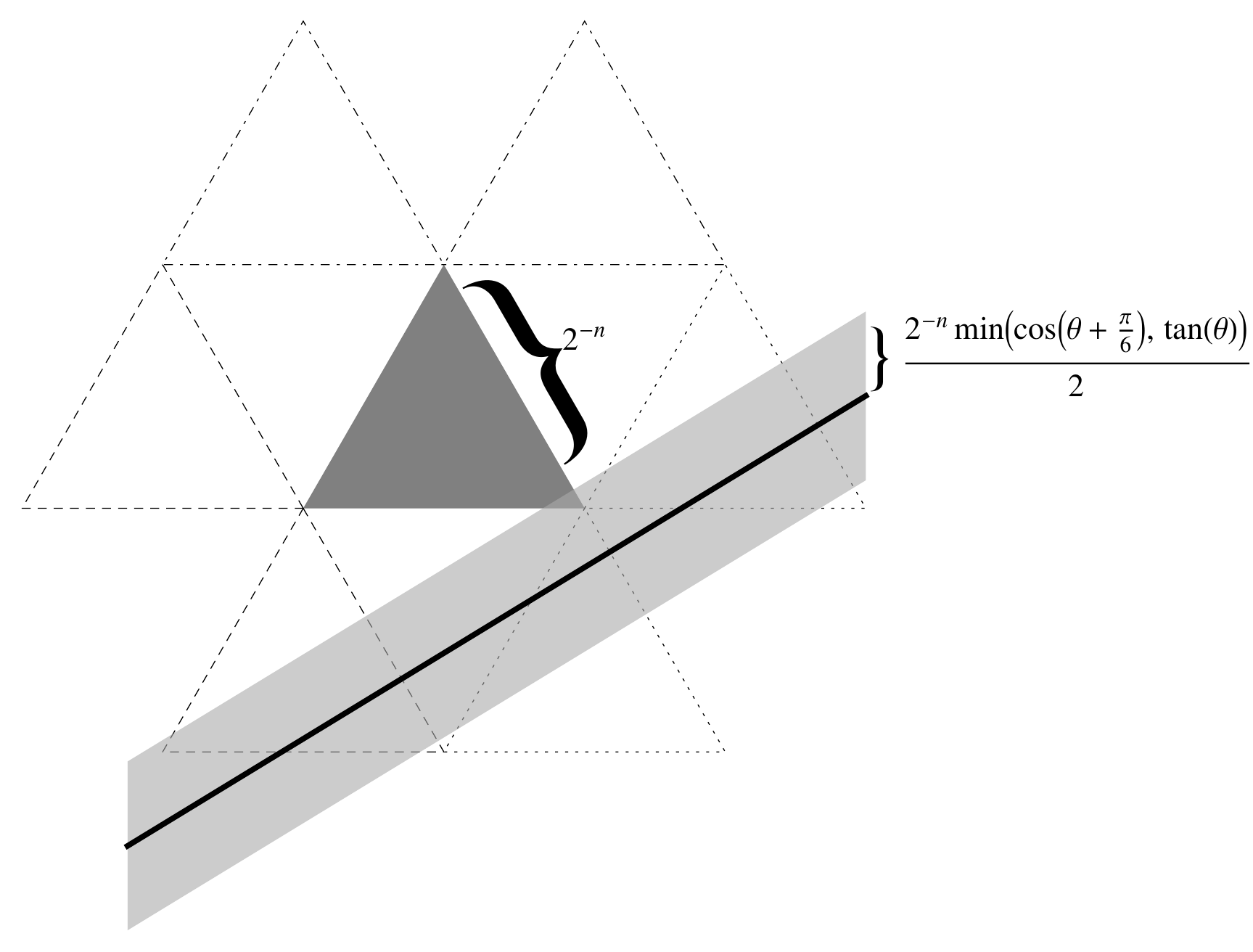}\\
  \caption{A ``bad'' set of the Sierpi\'nski gasket}\label{fsier}
\end{center}
\end{figure}

Let $\Delta_{\underline{i}}$ be an arbitrary $n$'th level cylinder set of $\Delta$. It is easy to see that if $\Delta_{\underline{i}}$ is not one of the corners of $\Delta$ then every corner of $\Delta_{\underline{i}}$ connects to another $n$'th level cylinder set, see Figure \ref{fsier}. We note that the constant $C(\theta)$ is chosen in the way that if the $\cos\theta C(\theta)2^{-n}$ neighbourhood of the line $L_{\theta,a}$ intersects a cylinder but not the line itself intersects it (that is it is a ``bad'' set) then the line intersects the closest neighbour of the cylinder. Therefore, for every $\underline{i}\in R_n(\theta,a)$ there exists at least one $\underline{j}\in G_n(\theta,a)$ such that $\Delta_{\underline{i}}$ and $\Delta_{\underline{j}}$ are connected to each other (by the choice of $C(\theta)$). Moreover, a cylinder set can be connected to at most $6$ other cylinder sets. Therefore, $R_n(\theta,a)\leq6G_n(\theta,a)$.

Applying that, we have
\[
\nu_{\theta}(B_{C(\theta)2^{-n}}(a))\leq3^{-n}7\#G_n(\theta,a).
\]
Taking logarithms, dividing by $-n\log2$ and taking limit inferior and limit superior we get by Lemma \ref{lbdgs}
\begin{equation}\label{edclb}
\begin{split}
&\underline{d}_{\nu_{\theta}}(a)+\overline{\dim}_B E_{\theta,a}\geq s,\\
&\overline{d}_{\nu_{\theta}}(a)+\underline{\dim}_B E_{\theta,a}\geq s.
\end{split}
\end{equation}

The inequalities (\ref{edcub}) and (\ref{edclb}) imply the statements.
\end{proof}

\section{Proof of Theorem \ref{ttyp}}\label{sttyp}

In this Section we prove Theorem \ref{ttyp}, that is for $\tan(\theta)\in\mathbb{Q}$ the angle $\theta$ is an exceptional direction in Marstrand's theorem.  We encode the box dimension of a slice $E_{\theta,a}$ using the matrices $\mx{A}_0,\mx{A}_1$.  This coding first appeared in \cite{LXZ}.  We then show that the Lebesgue-typical dimension of a slice is strictly less than $s-1$ by applying a result of Manning and Simon \cite[Theorem 9]{MS}.   Finally, we show that the $\nu_\theta$-typical dimension of a slice is strictly bigger than $s-1$.

For the rest of the paper we will work with the right-angle Sierpi\'nski gasket $\Lambda$ and for rational slopes. 

For the rest of the paper we assume that $\theta\in(0,\frac{\pi}{2})$ such that $\tan\theta=\frac{p}{q}$ where $p,q\in\N$ and the greatest common divisor is $1$. (This is equivalent with the choice $\theta\in(0,\frac{\pi}{3})$ for $\Delta$.)

\begin{lemma}\label{lmxbox}
Let $\theta$ and $a\in\Lambda_{\theta}$ be such that $\tan\theta=\frac{p}{q}$ and
\[
a=1-\frac{k-1}{q}-\frac{1}{q}\sum_{i=1}^{\infty}\frac{\xi_i}{2^i}
\]
then
\[
\underline{\dim}_B E_{\theta,a}=\liminf_{n\rightarrow\infty}\frac{\log\underline{e}_k\mx{A}_{\xi_1}\cdots\mx{A}_{\xi_n}\underline{e}}{n\log2}\text{ and }\overline{\dim}_B E_{\theta,a}=\limsup_{n\rightarrow\infty}\frac{\log\underline{e}_k\mx{A}_{\xi_1}\cdots\mx{A}_{\xi_n}\underline{e}}{n\log2},
\]
where $\underline{e}_k$ is the $k$'th element of the natural basis of $\R^{p+q}$ and $\underline{e}=\sum_{k=1}^{p+q}\underline{e}_k$.
\end{lemma}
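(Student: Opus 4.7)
I will establish the identity $\#G_n(\theta,a) = \underline{e}_k\,\mx A_{\xi_1}\mx A_{\xi_2}\cdots\mx A_{\xi_n}\,\underline{e}$ by induction on $n$ and then invoke Lemma \ref{lbdgs}, which already rewrites the upper and lower box dimensions of $E_{\theta,a}$ in terms of $\log\#G_n(\theta,a)/(n\log 2)$.

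\emph{Reformulation.} A level-$n$ cylinder $\Lambda_{i_1\cdots i_n}=F_{i_1}\cdots F_{i_n}(\Lambda)$ meets the line $L_{\theta,a}$ if and only if $a\in f_{i_1}\cdots f_{i_n}(\Lambda_\theta)$. Since $\Lambda_\theta=\bigcup_{j=1}^{p+q}I_j$ and each $f_\ell$ sends a single interval $I_j$ onto some half-interval $I_m^\epsilon$, iterating shows that $f_{i_1}\cdots f_{i_n}(\Lambda_\theta)$ is the interior-wise disjoint union of the $p+q$ intervals $f_{i_1}\cdots f_{i_n}(I_j)$, each of length $q^{-1}2^{-n}$ and each an iterated $2^n$-fold half of some $I_m$. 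Hence
\[
\#G_n(\theta,a)\;=\;\sum_{j=1}^{p+q}\#\bigl\{(i_1,\dots,i_n):\ a\in f_{i_1}\cdots f_{i_n}(I_j)\bigr\}.
\]

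\emph{Zoom-in step.} The explicit formula for $a$ makes $\xi_1$ record which half of $I_k$ contains the point: the minus sign in $a=1-(k-1)/q-q^{-1}\sum\xi_i2^{-i}$ gives $a\in I_k^{1-\xi_1}$. Fix any $i_1\in\{0,1,2\}$ with $a\in f_{i_1}(\Lambda_\theta)$; then $a_{i_1}:=f_{i_1}^{-1}(a)$ lies in the unique interval $I_{j_1}$ such that $f_{i_1}(I_{j_1})=I_k^{1-\xi_1}$. Writing $f_{i_1}(t)=t/2+c_{i_1}$ and substituting, a direct computation gives
\[
a_{i_1}\;=\;1-\frac{j_1-1}{q}-\frac{1}{q}\sum_{i\geq 1}\frac{\xi_{i+1}}{2^i},
\]
so the symbolic code of $a_{i_1}$ in $I_{j_1}$ is precisely the left shift $\xi_2\xi_3\cdots$ of the code of $a$ in $I_k$. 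Comparing against the matrices displayed for $p/q=2/3$, one confirms that $(\mx A_{\xi_1})_{k,j_1}$ counts exactly the number of $i_1$'s for which $f_{i_1}(I_{j_1})=I_k^{1-\xi_1}$.

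\emph{Induction and conclusion.} Grouping the admissible $i_1$'s by the resulting $j_1$ and applying the inductive hypothesis to $a_{i_1}\in I_{j_1}$ (which has the shifted expansion) yields
\[
\#G_n(\theta,a)\;=\;\sum_{j_1=1}^{p+q}(\mx A_{\xi_1})_{k,j_1}\,\#G_{n-1}(\theta,a_{i_1})\;=\;\underline{e}_k\,\mx A_{\xi_1}\cdots\mx A_{\xi_n}\,\underline{e},
\]
with base case $\#G_0=1=\underline{e}_k\underline{e}$. Substituting into Lemma \ref{lbdgs} gives both identities of the lemma. I expect the main obstacle to be the careful bookkeeping in the zoom-in step: one must align the half-label convention of $\mx A_{\xi_1}$ with the complemented half $I_k^{1-\xi_1}$ that actually contains $a$, exploit the fact that each $f_k$ is orientation-preserving so that iterated halving is compatible under $f_{i_1}^{-1}$, and handle (harmlessly, since they form a $\nu_\theta$-null set) the dyadic ambiguity when $a$ lies on a boundary between two half-intervals.
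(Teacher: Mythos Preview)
Your approach is essentially the paper's: relate $\#G_n(\theta,a)$ to the matrix product and then invoke Lemma~\ref{lbdgs}. The paper does this by reading off directly that $(\mx{A}_{\xi_1}\cdots\mx{A}_{\xi_n})_{k,j}$ equals the number of words $\underline{i}\in\{0,1,2\}^n$ with $f_{\underline{i}}(I_j)=I_k^{\xi_1,\dots,\xi_n}$; your inductive ``zoom-in'' is an equivalent way to obtain the same combinatorial interpretation.

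There is, however, a genuine gap in how you dispose of the boundary points. The lemma is asserted for \emph{every} $a\in\Lambda_\theta$ admitting the stated expansion, so saying the dyadic endpoints form a $\nu_\theta$-null set does not finish the proof. At such an $a$ the exact identity $\#G_n(\theta,a)=\underline{e}_k\mx{A}_{\xi_1}\cdots\mx{A}_{\xi_n}\underline{e}$ can genuinely fail: the line $L_{\theta,a}$ may meet a cylinder $\Lambda_{\underline{i}}$ whose projected interval $f_{\underline{i}}(\Lambda_\theta)$ touches $I_k^{\xi_1,\dots,\xi_n}$ only at an endpoint, so that no $j$ satisfies $f_{\underline{i}}(I_j)=I_k^{\xi_1,\dots,\xi_n}$; and conversely your sum over $j$ can double-count when $a$ sits on the common boundary of two images. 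The paper does not try to salvage equality. Instead it proves the two-sided estimate
\[
\underline{e}_k\mx{A}_{\xi_1}\cdots\mx{A}_{\xi_n}\underline{e}\ \le\ \#G_n(\theta,a)\ \le\ 3\,\underline{e}_k\mx{A}_{\xi_1}\cdots\mx{A}_{\xi_n}\underline{e},
\]
the factor $3$ coming from the fact that each level-$n$ cylinder of $\Lambda$ is adjacent to at most three others, so that any ``boundary'' cylinder in $G_n(\theta,a)$ can be matched to a neighbouring one that \emph{is} counted by the matrix product. Any uniform multiplicative bound of this kind is enough once you pass to $\log(\cdot)/(n\log 2)$, so your argument is easily repaired along these lines; but as written it does not yet cover all $a$.
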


\begin{proof}
By the definition of the matrices $\mx{A}_0, \mx{A}_1$ it is easy to see that for every $n\geq1$ and $\xi_1,\dots,\xi_n\in\left\{0,1\right\}$ we have
\[
\left(\mx{A}_{\xi_1}\cdots\mx{A}_{\xi_n}\right)_{i,j}=\sharp\left\{\underline{i}\in\left\{0,1\right\}^n:f_{\underline{i}}(I_j)=I_i^{\xi_1,\dots,\xi_n}\right\},
\]
where $I_i^{\xi_1,\dots,\xi_n}$ denotes the interval $[1-\frac{i-1}{q}-\frac{1}{q}\sum_{l=1}^n\frac{\xi_l}{2^l}-\frac{1}{q2^{n}},1-\frac{i-1}{q}-\frac{1}{q}\sum_{l=1}^n\frac{\xi_l}{2^l}]$. Therefore
\[
\underline{e}_k\mx{A}_{\xi_1}\cdots\mx{A}_{\xi_n}\underline{e}=\sharp\left\{\underline{i}\in\left\{0,1\right\}^n:\text{there exists a }1\leq j\leq p+q\text{ such that }f_{\underline{i}}(I_j)=I_k^{\xi_1,\dots,\xi_n}\right\}.
\]
For every $I_k^{\xi_1,\dots,\xi_n}$ and every $(i_1,\dots,i_n)$ such that there exists a $1\leq j\leq p+q$ such that $f_{i_1,\dots,i_n}(I_j)=I_k^{\xi_1,\dots,\xi_n}$, then $I_k^{\xi_1,\dots,\xi_n}\subseteq\proj_{\theta}\Lambda_{i_1,\dots,i_n}$. This implies that for every $a\in I_k^{\xi_1,\dots,\xi_n}$
\[
\underline{e}_k\mx{A}_{\xi_1}\cdots\mx{A}_{\xi_n}\underline{e}\leq\sharp G_n(\theta,a).
\]

On the other hand for every $a\in\proj_{\theta}\Lambda$ if $a\in\mathrm{int}(I_k^{\xi_1,\dots,\xi_n})$ then for every $(i_1,\dots,i_n)\in G_n(\theta,a)$ there exists a $1\leq j\leq p+q$ such that $f_{i_1,\dots,i_n}(I_j)=I_k^{\xi_1,\dots,\xi_n}$. If $a\in\partial(I_k^{\xi_1,\dots,\xi_n})$ then for every $(i_1,\dots,i_n)\in G_n(\theta,a)$ there exists a $(i_1',\dots,i_n')\in G_n(\theta,a)$ and a $1\leq j\leq p+q$ such that $f_{i_1',\dots,i_n'}(I_j)=I_k^{\xi_1,\dots,\xi_n}$ as well as $\Lambda_{i_1,\dots,i_n}$ and $\Lambda_{i_1',\dots,i_n'}$ are connected or equal.
Since for every cylinder set can be connected to at most three other cylinder sets we have for any $a\in I_k^{\xi_1,\dots,\xi_n}$ that
\[
\sharp G_n(\theta,a)\leq3\underline{e}_k\mx{A}_{\xi_1}\cdots\mx{A}_{\xi_n}\underline{e}.
\]
The proof is completed by Lemma \ref{lbdgs}.
\end{proof}

One of the main properties of the matrices $\mx{A}_0, \mx{A}_1$ is stated in the following proposition.

\begin{prop}\label{pmain}
Let $p,q$ be integers such that the greatest common divisor is $1$, and let $\mx{A}_0$ and $\mx{A}_1$ be defined as in (\ref{eprojmatr}) (or equivalently as in (\ref{eprojmatr2})). Then there exists $n_0\geq1$ and a finite sequence $(\xi_1,\dots,\xi_{n_0})\in\left\{0,1\right\}^{n_0}$ such that every element of $\mx{A}_{\xi_1}\cdots\mx{A}_{\xi_{n_0}}$ is strictly positive.

Moreover, for every $n\geq1$
\begin{multline}\label{epmain}
\sharp\left\{(\xi_1,\dots,\xi_n)\in\left\{0,1\right\}^n:\exists1\leq i,j\leq p+q\text{ such that }\left(\mx{A}_{\xi_1,\dots,\xi_n}\right)_{i,j}=0\right\}\leq\\\sum_{l=0}^{(p+q-1)(p+q)-1}\binom{n}{l}2^l.
\end{multline}
\end{prop}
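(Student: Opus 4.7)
The proposition splits into two assertions, the second of which is the main obstacle. For the existence of a word $w_0$ with $\mx{A}_{w_0}$ strictly positive, I would first establish primitivity of $\mx{A}_0+\mx{A}_1$. From the description (\ref{eprojmatr2}) each row and column of $\mx{A}_n$ contains at least one $1$, and $(\mx{A}_0+\mx{A}_1)\underline{e}=3\underline{e}$ since each interval $I_j$ is covered exactly three times by $f_0(I_j),f_1(I_j),f_2(I_j)$. Strong connectivity of the associated directed graph follows from transitivity of the IFS $\phi$ on $\Lambda_\theta$, and aperiodicity from the fixed point of $f_0$; hence $(\mx{A}_0+\mx{A}_1)^{n_1}$ is strictly positive for some $n_1$. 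Since each $\mx{A}_\xi$ has no zero row or column, strict positivity of a subproduct is preserved under any further multiplication. Starting from any word with at least one positive entry in its product, one can then append short blocks (using primitivity of the sum) to turn on new entries one at a time while preserving the old ones; after at most $(p+q)^2$ such extensions one obtains $w_0$.

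Call $w\in\{0,1\}^n$ \emph{bad} if $\mx{A}_w$ has a zero entry; by the persistence of strict positivity noted above, every prefix of a bad word is bad. Classify a step in a bad trajectory as \emph{branching} if both $\xi\in\{0,1\}$ yield bad continuations, and \emph{forced} otherwise. The counting reduces to the claim that every bad trajectory of length $n$ has at most $M-1$ branching steps, where $M=(p+q-1)(p+q)$: bad trajectories are then parametrised by their subset of branching positions ($\binom{n}{l}$ choices if there are $l$) together with the symbol picked at each ($2^l$ choices), while forced positions are reconstructed from the current state, yielding the total $\sum_{l=0}^{M-1}\binom{n}{l}2^l$.

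To bound the branchings, I would employ the potential
\[\Phi(w)=\bigl|\{(i,j):(i,j)\notin\mathrm{supp}(\mx{A}_{w_1\cdots w_k})\text{ for all }0\leq k\leq|w|\}\bigr|,\]
which is monotonically non-increasing in $|w|$ and starts at $\Phi(\emptyset)=M$ (the off-diagonal entries of the identity). Two facts must then be established: first, $\Phi\geq 1$ on every bad trajectory, equivalently, that once the union of supports along all prefixes equals $\{1,\ldots,p+q\}^2$, some prefix has already realised the all-ones support and hence persistence forces the terminal product strictly positive; second, $\Phi$ strictly decreases at every branching step, since then both Boolean continuations must contribute a previously-unseen positive entry or else synchronisation to the absorbing all-ones support would be unavoidable. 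I expect both to follow from a careful combinatorial analysis of the support-automaton generated by the sparse, near-cyclic structure of $\mx{A}_0,\mx{A}_1$ visible in (\ref{eprojmatr2}) (each row and column has only one or two nonzero entries, shifted by a controlled rule modulo $p+q$). Verifying this dichotomy rigorously is the technical heart of the proof and is where I expect the principal difficulty to lie.
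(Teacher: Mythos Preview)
Your high-level framework---track a potential along the trajectory, argue that it can strictly drop at most $M-1=(p+q)(p+q-1)-1$ times, and that at all other steps the next symbol is determined---is exactly the skeleton of the paper's proof. The difficulty is that the potential you chose does not do the job.

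Your potential $\Phi(w)$ counts the entries that are zero in \emph{every} prefix product. For this to be useful you need $\Phi\geq 1$ on bad words, and you justify this by claiming that once the union of prefix supports is full, some single prefix already has full support. That is false: the supports of successive products are \emph{not} nested, only their cardinalities are monotone. Concretely, take $p=1$, $q=2$, so
\[
\mx{A}_0=\begin{pmatrix}0&1&0\\1&1&0\\0&0&1\end{pmatrix},\qquad
\mx{A}_1=\begin{pmatrix}1&0&0\\1&0&1\\0&1&1\end{pmatrix}.
\]
Then $\mx{A}_0\mx{A}_1\mx{A}_0$ has zeros exactly at $(1,1),(2,1)$, so $w=010$ is bad; but the zero sets of $I,\ \mx{A}_0,\ \mx{A}_0\mx{A}_1,\ \mx{A}_0\mx{A}_1\mx{A}_0$ intersect in the empty set, so $\Phi(010)=0$. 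Thus Claim~1 fails, and with it the whole counting argument built on $\Phi$.

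The paper instead uses the much simpler potential $\Psi(w)=\sum_j n_j(\mx{A}_w)$, the total number of zero entries in the \emph{current} product. The two properties one needs are: (i)~$\Psi$ is non-increasing under left multiplication by either $\mx{A}_n$; (ii)~whenever $\Psi>0$, at most one choice of $\mx{A}_n$ leaves $\Psi$ unchanged. Both are proved column by column via two short combinatorial lemmas about the explicit structure in (\ref{eprojmatr2}): a Hall-type matching statement (any $m$ columns of $\mx{A}_n$ are hit by $m$ distinct rows), and a growth statement (for any $m<p+q$ columns, at least one $\mx{A}_n$ produces $m+1$ distinct rows). These two lemmas are the real content; once you have them, the counting is immediate with the dichotomy ``$\Psi$ strictly drops'' versus ``$\Psi$ stays equal'' (not your branching/forced dichotomy, which does not align with $\Psi$). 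The existence of a strictly positive product also falls out of the same two lemmas directly, without the detour through primitivity of $\mx{A}_0+\mx{A}_1$.
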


We divide the proof of Proposition \ref{pmain} into the following three lemmas.

\begin{lemma}\label{lmxbasic}
Let $p,q$ be integers such that the greatest common divisor is $1$, and let $\mx{A}_0$ and $\mx{A}_1$ be defined as in (\ref{eprojmatr}). Then there are at least one and at most two $1$'s in each column and in each row of $\mx{A}_n$. Moreover, the sum of each column of $\mx{A}_0+\mx{A}_1$ is three.
\end{lemma}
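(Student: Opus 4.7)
The approach is to pin down, for each $k \in \{0,1,2\}$ and $j \in \{1,\ldots,p+q\}$, exactly which subinterval $I_i^n$ the image $f_k(I_j)$ coincides with; the row and column counts then follow by straightforward case analysis.

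Since each $f_k$ contracts by $1/2$ and the endpoints of $I_j$ lie on the grid $\{1 - m/q : m \in \mathbb{Z}\}$, the image $f_k(I_j)$ has length $1/(2q)$ and endpoints on the finer grid $\{1 - m/(2q)\}$, which is precisely the grid defining the partition $\{I_i^n\}$. A direct check that $f_k([-p/q,1]) \subseteq [-p/q,1]$ for each $k$ then shows that $f_k(I_j)$ equals a unique $I_{i(k,j)}^{n(k,j)}$. Because distinct maps $f_k$ produce distinct images, the entries of $\mx{A}_n$ lie in $\{0,1\}$, and summing the three contributions gives $\sum_{i,n}(\mx{A}_0 + \mx{A}_1)_{i,j} = 3$, which is the column-sum assertion.

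For the column bounds on $\mx{A}_n$, I would compute the $n$-type of $f_k(I_j)$. Writing the right endpoint as $1 - m_k/(2q)$, one gets $m_0 = q+j-1$, $m_1 = j-1$, $m_2 = p+q+j-1$, while the partition convention gives $n(k,j) \equiv m_k+1 \pmod 2$. So the three $n$-types are $q+j,\,j,\,p+q+j$ modulo $2$, and they can all coincide only when both $p$ and $q$ are even, contradicting $\gcd(p,q)=1$. Hence both types are realised by at least one and at most two of $f_0(I_j), f_1(I_j), f_2(I_j)$, which gives exactly $1 \leq \sum_i (\mx{A}_n)_{i,j} \leq 2$.

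For the row bounds, note that $f_1([-p/q,1])=[1/2-p/(2q),1]$ and $f_2([-p/q,1])=[-p/q,1/2-p/(2q)]$ meet only at the grid-point $1/2 - p/(2q)$, so the positive-length interval $I_i^n$ can be an image under at most one of $f_1,f_2$; thus at most two of $f_0,f_1,f_2$ can hit any given $I_i^n$, giving the row upper bound of $2$. Moreover, these two images already cover $[-p/q,1]$, so every $I_i^n$ lies entirely in one of them, and the corresponding preimage $f_k^{-1}(I_i^n)$ is a length-$1/q$ interval with endpoints on the $I_j$-grid, hence equals some $I_j$, forcing $(\mx{A}_n)_{i,j}=1$ and giving the row lower bound. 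The main delicacy is the parity argument for the column bounds, where the hypothesis $\gcd(p,q)=1$ enters essentially: without it a column of $\mx{A}_n$ could contain zero or three $1$'s.
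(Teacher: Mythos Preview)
Your argument is correct. The paper's own proof is the single sentence ``The proof follows immediately from the definition''; your computation of the images $f_k(I_j)$, the parity analysis using $\gcd(p,q)=1$, and the tiling of $[-p/q,1]$ by $f_1,f_2$ are precisely what that sentence unpacks to, so there is no difference in approach.
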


The proof follows immediately from the definition.

\begin{lemma}\label{lmxatleast}
Let $p,q$ be integers such that the greatest common divisor is $1$, and let $\mx{A}_0$ and $\mx{A}_1$ be defined as in (\ref{eprojmatr}) and (\ref{eprojmatr2}). Then for every $1\leq m\leq p+q$ distinct columns $1\leq j_1,\dots,j_m\leq p+q$ and every $n=0,1$ there exist $m$ distinct rows $1\leq i_1,\dots,i_m\leq p+q$ such that $\left(\mx{A}_n\right)_{i_k,j_k}=1$ for every $k=1,\dots,m$. Note that $i_1,\dots,i_m$ may depend on $n$.

\end{lemma}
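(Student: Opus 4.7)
The plan is to prove a slightly stronger statement: for each $n \in \{0,1\}$ the matrix $\mx{A}_n$ admits a perfect matching pattern, i.e.\ there is a permutation $\sigma_n$ of $\{1,\dots,p+q\}$ with $(\mx{A}_n)_{\sigma_n(j),j}=1$ for every $j$. The lemma then follows by taking $i_k=\sigma_n(j_k)$. Using (\ref{eprojmatr2}), define $\phi_a(i) \in \{1,\dots,p+q\}$ as the representative of $2i+1-n$ modulo $p+q$ and, when $i$ lies in $R(n):=\{i : q+1 \leq 2i+n-1 \leq 2q+p\}$, let $\phi_b(i)$ denote the representative of $2i+1-n-q$ modulo $p+q$. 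Formula (\ref{eprojmatr2}) says that row $i$ of $\mx{A}_n$ contains a $1$ in column $\phi_a(i)$ always, and additionally in column $\phi_b(i)$ exactly when $i \in R(n)$.

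If $p+q$ is odd then $\gcd(2,p+q)=1$, so $\phi_a$ is already a bijection on $\{1,\dots,p+q\}$ and one may take $\sigma_n=\phi_a^{-1}$. The substantive case is when $p+q$ is even; since $\gcd(p,q)=1$, this forces both $p$ and $q$ to be odd. In that situation $\phi_a(i)$ has constant parity $1-n$, while $\phi_b(i)$ has constant parity $n$ (using that $q$ is odd). Thus $\phi_a$ is $2$-to-$1$ onto the $(p+q)/2$ columns of parity $1-n$, and $\phi_b$ maps into the $(p+q)/2$ columns of parity $n$.

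I would next verify by elementary arithmetic that $R(n)$ is an integer interval of length exactly $(p+q)/2$ and that its complement $R(n)^c$ is either a single interval or a union of two intervals of total length $(p+q)/2$. Since each of these sets has diameter strictly less than $p+q$, and (in the two-interval case) the gap between the two pieces of $R(n)^c$ is precisely $(p+q)/2+1$, no two elements of $R(n)$ and no two elements of $R(n)^c$ differ by $(p+q)/2$. Because $\phi_a(i)=\phi_a(i')$ iff $i\equiv i'\pmod{(p+q)/2}$, and likewise for $\phi_b$, this shows that $\phi_b|_{R(n)}$ is injective, hence by cardinality a bijection onto the parity-$n$ columns, and that $\phi_a|_{R(n)^c}$ is a bijection onto the parity-$(1-n)$ columns. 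Defining $\sigma_n(j)=\phi_b^{-1}(j)$ when $j$ has parity $n$ and $\sigma_n(j)=(\phi_a|_{R(n)^c})^{-1}(j)$ otherwise gives the required permutation.

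The principal obstacle is the interval bookkeeping in the even case, in particular the confirmation that the gap between the two pieces of $R(n)^c$ is exactly $(p+q)/2+1$ and that the internal diameters of $R(n)$ and of each piece of $R(n)^c$ are strictly smaller than $(p+q)/2$. Once this is verified, the two bijections $\phi_b|_{R(n)}$ and $\phi_a|_{R(n)^c}$ assemble disjointly, since they target columns of opposite parity, and yield the desired perfect matching.
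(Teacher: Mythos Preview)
Your proof is correct. Both you and the paper split into the cases $p+q$ odd (where $i\mapsto 2i+1-n\pmod{p+q}$ is already a bijection) and $p+q$ even, and in the even case both arguments ultimately rest on the same arithmetic fact: the two $\phi_a$-preimages of any column differ by exactly $(p+q)/2$, and the interval $R(n)$ of length $(p+q)/2$ can contain only one of them.

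The presentations differ, however. The paper argues locally: it fixes a row $i_1\in R(n)$ with two $1$'s in columns $j_1=\phi_a(i_1)$ and $j_2=\phi_b(i_1)$, then shows that column $j_2$ has its unique $1$ in row $i_1$ and that the companion row $i_1'=i_1\pm(p+q)/2$ lies outside $R(n)$ and has its unique $1$ in column $j_1$. From these pairings the conclusion is asserted. Your argument is more global and explicit: you observe that $\phi_a$ and $\phi_b$ land in columns of opposite parity, verify by the interval bookkeeping that $\phi_a|_{R(n)^c}$ and $\phi_b|_{R(n)}$ are each injective on a set of size $(p+q)/2$, and assemble them into a single permutation $\sigma_n$ with $(\mx{A}_n)_{\sigma_n(j),j}=1$ for every $j$. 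This yields the slightly stronger statement that $\mx{A}_n$ dominates a permutation matrix, from which the lemma for arbitrary subsets of columns is immediate; the paper's structural observations imply the same thing but do not state it in this form.
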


\begin{proof}

If $p+q$ is odd then for any $j_k$ there exists a unique $i_k$ such that $2i_k-1+n\equiv j_k\mod p+q$ and, by (\ref{eprojmatr2}), $\left(\mx{A}_n\right)_{i_k,j_k}=1$. Moreover, if $j_k\neq j_{k'}$ then $i_k\neq i_{k'}$. This implies the statement of the lemma.

Now, let us assume that $p+q$ is even. Further, assume that there are two non-zero elements $j_1, j_2$ in the row $i_1$. Then
\[
2i_1-1+n\equiv j_1\mod p+q\text{ and }2i_1-1+n-q\equiv j_2\mod p+q.
\]
It is easy to see that every element of the column $j_2$ is $0$ except $(i_1,j_2)$. Moreover, there exists $1\leq i_1'\leq p+q$ such that $2i_1'-1+n\equiv j_1\mod p+q$. In this case, every element of the row $i_1'$ is $0$ except $(i_1',j_1)$. Otherwise, if there would be $j_3\neq j_1$ such that $2i_1'-1+n-q\equiv j_3\mod p+q$ then $j_3\equiv j_1-q\equiv j_2\mod p+q$, but every element of the column $j_2$ is zero except $(i_1,j_2)$, which is a contradiction.
Therefore, for $\mx{A}_n$, $n=0,1$ and for every $m$ distinct columns $j_1,\dots,j_m$ there are at least $m$ distinct rows $i_1,\dots,i_m$ such that $\left(\mx{A}_n\right)_{i_k,j_k}=1$.
\end{proof}

\begin{lemma}\label{lmxgrowth}
Let $p,q$ be integers such that the greatest common divisor is $1$, and let $\mx{A}_0$ and $\mx{A}_1$ be defined as in (\ref{eprojmatr}) and in (\ref{eprojmatr2}). Then for every $1\leq m<p+q$ distinct columns $1\leq j_1,\dots,j_m\leq p+q$ there exists an $n\in\left\{0,1\right\}$ and at least $m+1$ distinct rows $1\leq i_1,\dots,i_{m+1}\leq p+q$ such that $\left(\mx{A}_n\right)_{i_k,j_k}=1$ for $k=1,\dots,m$ and there exists a $j\in\left\{j_1,\dots,j_m\right\}$ such that $\left(\mx{A}_n\right)_{i_{m+1},j}=1$.
\end{lemma}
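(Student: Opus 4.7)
I plan to argue by contradiction. Let $G_n$ denote the bipartite graph of $\mx{A}_n$ and write $N_{G_n}(S)=\{i:(\mx{A}_n)_{i,j}=1\text{ for some }j\in S\}$, where $S=\{j_1,\dots,j_m\}$. By Lemma~\ref{lmxatleast}, each $G_n$ admits a perfect matching $j_k\mapsto i_k$ of $S$ to $m$ distinct rows inside $N_{G_n}(S)$; so if $|N_{G_n}(S)|\ge m+1$ for some $n$, then any $i_{m+1}\in N_{G_n}(S)\setminus\{i_1,\dots,i_m\}$ completes the lemma. I therefore assume $|N_{G_0}(S)|=|N_{G_1}(S)|=m$ and aim for a contradiction.

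\medskip

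Consider first $p+q$ odd, so $2$ is invertible modulo $p+q$. Writing $a_n(i)$ for the Case~A column function in~(\ref{eprojmatr2}) and $b_n(i)=a_n(i)-q\pmod{p+q}$ for Case~B, and denoting the Case~B range by $R_n=\{i:q+1\le 2i+n-1\le 2q+p\}$, one has
\[
N_{G_n}(S)=a_n^{-1}(S)\cup\bigl(b_n^{-1}(S)\cap R_n\bigr).
\]
Because $a_n$ is a bijection of $\mathbb{Z}/(p+q)\mathbb{Z}$ and $b_n^{-1}(\cdot)=a_n^{-1}(\cdot+q)$, the failure hypothesis $|N_{G_n}(S)|=m$ rearranges (after applying $a_n$) to $(S+q)\cap a_n(R_n)\subseteq S$ for both $n$. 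The core observation is the tiling identity $a_0(R_0)\cup a_1(R_1)=\mathbb{Z}/(p+q)\mathbb{Z}$: both $a_n(R_n)$ are step-$2$ arithmetic progressions of total length $|R_0|+|R_1|=p+q$, and using that $a_0$ and $a_1$ differ by a constant modulo $p+q$, together with the explicit endpoints of $R_n$, one checks they are disjoint (a short sub-case analysis by the parity of $q$ is required, since $p+q$ odd means exactly one of $p,q$ is even). Granted the tiling, the two failure conditions combine to $S+q\subseteq S$, hence $S+q=S$ by cardinality. Since $\gcd(p,q)=1$ forces $\gcd(q,p+q)=1$, the element $q$ generates $\mathbb{Z}/(p+q)\mathbb{Z}$ additively, so $S\in\{\emptyset,\mathbb{Z}/(p+q)\mathbb{Z}\}$, contradicting $0<m<p+q$.

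\medskip

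For $p+q$ even (so $p,q$ are both odd), the map $a_n$ is two-to-one: Case~A fills two rows per column for columns of one fixed parity in $\mx{A}_n$, and Case~B fills exactly one row per column of the opposite parity (the other Case~B preimage being cut by $R_n$). Splitting $S$ by the parity of its elements and rerunning the same inclusion chase again forces $S+q=S$, and the coprimality argument concludes. The main technical obstacle I expect is the clean verification of the tiling/disjointness claim for $a_0(R_0)$ and $a_1(R_1)$ across the various parity sub-cases of $(p,q)$; the identity is elementary but bookkeeping-heavy, and once it is in hand the contradiction follows mechanically from $\gcd(p,q)=1$.
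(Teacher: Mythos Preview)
Your argument is correct and reaches the same endgame as the paper---both exploit that Case~B columns are Case~A columns shifted by $q$, and both finish with $\gcd(q,p+q)=1$---but the packaging differs. The paper argues by building a directed graph on $\{j_1,\dots,j_m\}$: since each column has exactly three nonzero entries in $\mx{A}_0+\mx{A}_1$, one matrix contributes an ``extra'' row, and under the failure hypothesis that extra row must coincide with some other column's matched row; the resulting edge relation is $j_k\mapsto j_k\pm q\pmod{p+q}$, and a cycle of length $t\le m<p+q$ forces $tq\equiv 0\pmod{p+q}$, a contradiction. You instead show the set-theoretic invariance $S+q=S$ directly. These are equivalent endpoints (a $+q$-invariant nonempty proper subset is exactly a union of short cycles of the $+q$ permutation), so neither route is deeper, but yours is arguably cleaner in the odd case.

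Two remarks on execution. First, the tiling identity you flag as the main obstacle is in fact immediate once you note that $\{a_0(i):i\in R_0\}$ consists of the \emph{even} integers in $[q+1,2q+p]$ while $\{a_1(i):i\in R_1\}$ consists of the \emph{odd} integers in that same interval; together they enumerate an interval of length exactly $p+q$, hence a complete residue system. No parity sub-case analysis is needed. Second, your even-case sketch is correct but the details are worth recording: writing $S=S_e\cup S_o$ by parity, one finds $|N_{G_0}(S)|=2|S_e|+|S_o|-|\{j\in S_o:j+q\in S_e\}|$ and symmetrically for $G_1$; equating both to $m=|S_e|+|S_o|$ forces $|S_e|=|S_o|$ together with $S_o+q\subseteq S_e$ and $S_e+q\subseteq S_o$, whence $S+q=S$ as you claim.
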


\begin{proof}
We argue by contradiction. Let us fix the $m$ distinct columns $1\leq j_1,\dots,j_m\leq p+q$. By Lemma \ref{lmxbasic} in every column there are at least one and at most two ``$1$'' elements and by Lemma \ref{lmxatleast} there are at least $m$ different rows $1\leq i_1,\dots, i_m\leq p+q$ in $\mx{A}_0$ and at least $m$ different rows $1\leq s_1,\dots, s_m\leq p+q$ in $\mx{A}_1$ such that $\left(\mx{A}_0\right)_{i_k,j_k}=1$ and $\left(\mx{A}_1\right)_{s_k,j_k}=1$. To get a contradiction we assume that
\begin{equation}\label{a1}
 \tag{\textbf{A1}}
\forall i\not\in \left\{i_1,\dots ,i_m\right\},
\forall s\not\in \left\{s_1,\dots ,s_m\right\},
\forall k:\quad
\left(\mx{A}_0\right)_{i,j_k}=0,\
\left(\mx{A}_1\right)_{s,j_k}=0.
\end{equation}
By Lemma \ref{lmxbasic} the matrix $\mx{A}_0+\mx{A}_1$ has in each column exactly $3$ non-zero elements. Therefore we can assume without loss of generality that there is an $0\leq l\leq m$ such that in $\mx{A}_0$ the columns $j_1,\dots,j_l$ and in $\mx{A}_1$ the columns $j_{l+1},\dots,j_m$ contain two non-zero elements. Namely, there are $l$ distinct rows $1\leq i_1',\dots,i_l'\leq$ and $m-l$ distinct rows $1\leq s_{l+1}',\dots,s_m'\leq p+q$ such that $\left(\mx{A}_0\right)_{i_k',j_k}=1$ for $k=1,\dots,l$ and $\left(\mx{A}_1\right)_{s_k',j_k}=1$ for $k=l+1,\dots,m$. Moreover, by our assumption (\textbf{A1}) and Lemma \ref{lmxatleast}, for every $i_k'$ there exists a $i_{t_k}$ such that $l+1\leq t_k\leq m$ and $i_k'=i_{t_k}$.  Similarly, for every $s_k'$ there exists a $s_{t_k}$ such that $1\leq t_k\leq l$ and $s_k'=s_{t_k}$.

Let us define now a directed graph $G(V,E)$ such that the vertices are $V=\left\{j_1,\dots,j_m\right\}$ and there is an edge $j_k\rightarrow j_n$ if and only if $s_k'=s_n$ or $i_k'=i_n$. It is easy to see that
\begin{equation}\label{eedge}
j_k\rightarrow j_n\Longleftrightarrow\left\{\begin{array}{crcr} j_n-q\equiv j_k\mod p+q & \text{if }p+q\text{ is odd}\\
                                                                j_k-q\equiv j_n\mod p+q & \text{if }p+q\text{ is even.}\end{array}\right.
\end{equation}

Since from every vertex of $G$ there is an edge pointing out, there is a cycle $j_{n_1}\rightarrow j_{n_2}\rightarrow\cdots\rightarrow j_{n_t}\rightarrow j_{n_1}$, where $1\leq t\leq m$. By (\ref{eedge}) we have
\[
\begin{split}
&j_{n_1}\equiv j_{n_2}-q\equiv\cdots\equiv j_{n_t}-(t-1)q\equiv j_{n_1}-tq\mod p+q\text{ if }p+q\text{ is odd or }\\
&j_{n_1}\equiv j_{n_t}-q\equiv\cdots\equiv j_{n_2}-(t-1)q\equiv j_{n_1}-tq\mod p+q\text{ if }p+q\text{ is even.}
\end{split}
\]
Then $tq\equiv0\mod p+q$. Since $(q,p+q)=1$, then $t\equiv0\mod p+q$. Therefore $p+q\leq t\leq m<p+q$ which is a contradiction.
\end{proof}

\begin{proof}[Proof of Proposition \ref{pmain}]
First, we prove the existence of such a sequence. It is easy to see by Lemma \ref{lmxatleast} that for every matrix $\mx{B}$ with non-negative elements and $n=0,1$, if the $l$'th column of $\mx{B}$ contains $m$ non-zero elements then the $l$'th column of the matrix $\mx{A}_n\mx{B}$ contains at least $m$ non-zero elements. Moreover, by Lemma \ref{lmxgrowth}, for every column $l$ of $\mx{B}$ there exists an $n\in\left\{0,1\right\}$ such that if it contains $m$ non-zero elements then the $l$'th column of $\mx{A}_n\mx{B}$ contains at least $m+1$ non-zero elements.

Therefore, by taking $n=(p+q)(p+q-1)+1$ we have that there exists $\left\{\xi_k\right\}_{k=1}^{n}$ with $\xi_k\in \{0,1\}$ for which every entry
of the matrix $\mx{A}_{\xi_{n}}\cdots\mx{A}_{\xi_1}$ is non-zero.

For a $(p+q)\times(p+q)$ non-negative matrix $B$ and $1\leq j \leq p+ q$ we let $n_j(\mx{B})$ denote the number of entries that are zero in the $j$'th column.  We observe that for such a matrix we have $n_j(\mx{B}) \geq n_j(\mx{A}_i\mx{B})$ for each $i=0,1$.  Furthermore, we have that there is at most one matrix $\mx{A}_i$ for which $n_j(\mx{B}) = n_j(\mx{A}_i\mx{B})$.  Suppose now that for a finite word $(\xi_1,\dots,\xi_n)$ we have that the matrix $\mx{A}_{\xi_n}\cdots\mx{A}_{\xi_1}$ contains at least one zero entry.  Thus, we have that
\begin{equation*}
1 \leq \sum_{j=1}^{p+q} n_j(\mx{A}_{\xi_n}\cdots\mx{A}_{\xi_1}) \leq \sum_{j=1}^{p+q} n_j(\mx{A}_{\xi_{n-1}}\cdots\mx{A}_{\xi_1}) \leq \cdots \leq \sum_{j=1}^{p+q} n_j(\mx{A}_{\xi_1}) \leq (p+q)(p+q-1).
\end{equation*}
This implies that in this chain of inequalities we must have at most $(p+q)(p+q-1)-1$ strict inequalities. This means at least $n+1-(p+q)(p+q-1)$ of our choice of $\xi_k$ is determined by $\xi_1,\xi_2,\ldots,\xi_{k-1}$.  This implies the inequality (\ref{epmain}).

\end{proof}

It is natural to introduce the dyadic symbolic space. Let $\Xi=\left\{0,1\right\}^{\N}$ and $\Xi^*$ be the set of dyadic finite length words. Define the natural projection $\pi:\Xi\mapsto[0,1]$ by
\[
\pi(\ii)=\sum_{k=1}^{\infty}\frac{i_k}{2^k}.
\]
Moreover, let $\sigma$ be the left shift operator on $\Xi$.

For any $\theta$ with $\tan\theta\in\mathbb{Q}$ and $a\in\Lambda_{\theta}$ let us define $\Gamma_a=\left\{a+\frac{i}{q}\in\Lambda_{\theta}:i\in\mathbb{Z}\right\}$ and $F_{\theta,a}=\bigcup_{b\in\Gamma_a}E_{\theta,b}$.

\begin{prop}\label{pLebtypbox}
Let $p,q\in\N$ be relative primes and let $\theta\in(0,\frac{\pi}{2})$ be such that $\tan\theta=\frac{p}{q}$. Then for Lebesgue-almost every $a\in\Lambda_{\theta}$
\[
\dim_BE_{\theta,a}=\alpha(\theta),
\]
where
\begin{equation}\label{ealpha1}
\alpha(\theta)=\frac{1}{\log2}\lim_{n\rightarrow\infty}\frac{1}{n}\log\underline{e}\mx{A}_{\xi_1}\cdots\mx{A}_{\xi_n}\underline{e},\text{ for $\mathbb{P}$-a.a. }(\xi_1,\xi_2,\dots),
\end{equation}
where $\mathbb{P}$ is the equidistributed Bernoulli measure on $\Xi$. Similarly,
\begin{equation}\label{ealpha2}
\alpha(\theta)=\frac{1}{\log2}\lim_{n\rightarrow\infty}\frac{1}{n}\sum_{\xi_1,\dots,\xi_n}\frac{1}{2^n}\log\underline{e}\mx{A}_{\xi_1}\cdots\mx{A}_{\xi_n}\underline{e}.
\end{equation}
\end{prop}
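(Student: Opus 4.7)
The plan is to reduce the question to the asymptotics of the products $\mx{A}_{\xi_1}\cdots\mx{A}_{\xi_n}$, extract an almost sure exponential growth rate via Kingman's subadditive ergodic theorem, and then use the positivity statement of Proposition~\ref{pmain} to remove the dependence on the initial basis vector $\underline{e}_k$. Write $M_n(\xi) = \mx{A}_{\xi_1}\cdots\mx{A}_{\xi_n}$. By Lemma~\ref{lmxbox}, the box dimension of $E_{\theta,a}$ for $a = 1 - (k-1)/q - (1/q)\pi(\xi) \in I_k$ is $\lim_n \frac{1}{n \log 2} \log \underline{e}_k M_n(\xi)\underline{e}$ whenever the limit exists. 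The parametrisation $\xi \mapsto a$ pushes the Bernoulli measure $\mathbb{P}$ on $\Xi$ to a multiple of Lebesgue measure on $I_k$, so it suffices to show that for each $k$, the limit exists for $\mathbb{P}$-a.e.\ $\xi$ and equals a common constant $\alpha(\theta)$.

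I first treat the symmetric version $f_n(\xi) := \log \underline{e} M_n(\xi)\underline{e}$. A direct computation gives $\underline{e} AB\underline{e} = \sum_{i,j,k} A_{ij}B_{jk} \leq \bigl(\sum_{ij}A_{ij}\bigr)\bigl(\sum_{jk}B_{jk}\bigr) = (\underline{e} A \underline{e})(\underline{e} B \underline{e})$ for non-negative matrices $A,B$, so $f_{n+m}(\xi) \leq f_n(\xi) + f_m(\sigma^n \xi)$. Applying Kingman's subadditive ergodic theorem to the ergodic Bernoulli shift on $(\Xi,\mathbb{P})$ produces a constant $\lambda \geq 0$ with $f_n(\xi)/n \to \lambda$ $\mathbb{P}$-a.s. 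Since every column of $\mx{A}_0$ and $\mx{A}_1$ has sum at most $2$ (Lemma~\ref{lmxbasic}), one checks by induction that $\underline{e} M_n(\xi)\underline{e} \leq (p+q)2^n$; in particular $f_n/n$ is uniformly bounded by $\log 2 + O(1/n)$, so dominated convergence yields $\mathbb{E}[f_n/n] \to \lambda$, which after setting $\alpha(\theta) := \lambda/\log 2$ is exactly formula~(\ref{ealpha2}).

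It remains to upgrade this to the asymmetric quantity $\underline{e}_k M_n(\xi)\underline{e}$. The bound $\underline{e}_k M_n \underline{e} \leq \underline{e} M_n \underline{e}$ immediately gives $\limsup_n \frac{1}{n} \log \underline{e}_k M_n(\xi)\underline{e} \leq \lambda$ $\mathbb{P}$-a.s. For the matching lower bound I use Proposition~\ref{pmain} to fix a finite word $\eta = (\eta_1,\ldots,\eta_{n_0}) \in \{0,1\}^{n_0}$ for which every entry of $B := \mx{A}_{\eta_1}\cdots\mx{A}_{\eta_{n_0}}$ is at least some $c>0$. By Borel--Cantelli (or ergodicity), for $\mathbb{P}$-a.e.\ $\xi$ the word $\eta$ occurs in $\xi$; let $m = m(\xi)$ be the smallest starting position, so that $M_n(\xi) = M_m(\xi)\, B\, M_{n-m-n_0}(\sigma^{m+n_0}\xi)$ for $n > m+n_0$. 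Because every row of $\mx{A}_0$ and $\mx{A}_1$ contains at least one $1$ (Lemma~\ref{lmxbasic}), an induction shows that the non-negative integer row vector $\underline{e}_k M_m(\xi)$ has at least one coordinate $\geq 1$ and therefore componentwise dominates some standard basis vector $\underline{e}_{j^*}$; hence $\underline{e}_k M_m(\xi)B \geq \underline{e}_{j^*}B \geq c\,\underline{e}$ entrywise and
\[
\underline{e}_k M_n(\xi)\underline{e} \;\geq\; c\cdot \underline{e}\, M_{n-m-n_0}(\sigma^{m+n_0}\xi)\, \underline{e}.
\]
Taking $\frac{1}{n}\log$ and letting $n\to\infty$ (with $m$ fixed, and using that $\sigma^{m+n_0}\xi$ remains $\mathbb{P}$-typical by shift-invariance) yields $\liminf_n \frac{1}{n}\log \underline{e}_k M_n(\xi)\underline{e} \geq \lambda$. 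Combined with the upper bound, we obtain~(\ref{ealpha1}); Lemma~\ref{lmxbox} then converts the almost sure limit into $\dim_B E_{\theta,a} = \alpha(\theta)$ for Lebesgue-a.e.\ $a \in I_k$, and a union over $k = 1,\ldots,p+q$ finishes the proof.

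The step I expect to be the main obstacle is precisely the passage from the symmetric quantity $\underline{e} M_n \underline{e}$, which enjoys the clean subadditivity needed for Kingman's theorem, to the asymmetric quantity $\underline{e}_k M_n \underline{e}$ singled out by Lemma~\ref{lmxbox}; bridging this gap is exactly where the uniform positivity of some finite product $\mx{A}_{\eta_1}\cdots\mx{A}_{\eta_{n_0}}$ furnished by Proposition~\ref{pmain} is essential.
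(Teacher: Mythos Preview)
Your proof is correct and follows essentially the same strategy as the paper: subadditivity of $\log\underline{e}M_n\underline{e}$ plus Kingman's theorem gives (\ref{ealpha1}) and (\ref{ealpha2}), and the strictly positive product from Proposition~\ref{pmain} is then used to pass from the symmetric quantity $\underline{e}M_n\underline{e}$ to the asymmetric $\underline{e}_kM_n\underline{e}$ needed in Lemma~\ref{lmxbox}. The only cosmetic difference is that the paper phrases the last step geometrically---introducing the finite union of slices $F_{\theta,a}=\bigcup_{b\in\Gamma_a}E_{\theta,b}$, whose box dimension corresponds to $\underline{e}M_n\underline{e}$, and arguing that points in the cylinders $I_k^{\zeta_1,\dots,\zeta_n\xi_1,\dots,\xi_{n_0}}$ have $\dim_BE_{\theta,a}=\dim_BF_{\theta,a'}$---whereas you carry out the equivalent computation directly at the level of matrix inequalities via $\underline{e}_kM_m B\geq c\,\underline{e}$.
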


\begin{proof}
Since $\mx{A}_0, \mx{A}_1$ are non-negative matrices, we have for any $(\xi_1,\dots,\xi_n)\in\Xi^*$ and $1\leq k\leq n$
\[
\underline{e}\mx{A}_{\xi_1}\cdots\mx{A}_{\xi_n}\underline{e}\leq\underline{e}\mx{A}_{\xi_1}\cdots\mx{A}_{\xi_k}\underline{e}\ \underline{e}\mx{A}_{\xi_{k+1}}\cdots\mx{A}_{\xi_n}\underline{e}.
\]

Let $\mathbb{P}=\left\{\frac{1}{2},\frac{1}{2}\right\}^{\N}$ be the equidistributed Bernoulli measure on $\Xi$. Then by the sub-additive ergodic theorem (see \cite[p. 231]{W}) we have for $\mathbb{P}$-almost all $\underline{\xi}\in\Xi$ the limit (\ref{ealpha1}) exists and is  constant. The equation (\ref{ealpha2}) follows also from the sub-additive ergodic theorem.

It is easy to see that the measure $\sum_{k=1}^{p+q}\frac{1}{p+q}\left.\mathbb{P}\circ\pi^{-1}\circ h_k\right|_{I_k}$ is equivalent with the Lebesgue measure on $\Lambda_{\theta}$, where $h_k(x)=-qx+q-k$, so that $h_k(I_k)=[0,1]$. This and Lemma \ref{lmxbox} implies that for Lebesgue almost every $a\in\Lambda_{\theta}$
\begin{equation}\label{edim1}
\max_{b\in\Gamma_a}\dim_BE_{\theta,b}=\dim_BF_{\theta,a}=\alpha(\theta).
\end{equation}

Let $(\xi_1,\dots,\xi_{n_0})\in\left\{0,1\right\}^{n_0}$ be as in Proposition \ref{pmain}. Then for every $1\leq k\leq p+q$ and every finite length word $(\zeta_1,\dots,\zeta_n)\in\left\{0,1\right\}^*$ and Lebesgue-almost every $a\in I_k^{\zeta_1,\dots,\zeta_n\xi_1\dots\xi_{n_0}}$ we have
\[
\dim_BE_{\theta,a}=\dim_BF_{\theta,a'}=\alpha(\theta),
\]
where $a'=2^{n+n_0}\left(a-1+\frac{k-1}{q}\right)+\frac{1}{q}\sum_{i=1}^n2^{n+n_0-i}\zeta_i+\frac{1}{q}\sum_{i=1}^{n_0}2^{n_0-i}\xi_i+1-\frac{k-1}{q}$. The statement of the proposition follows from the fact that the set\\ $\bigcup_{k=1}^{p+q}\bigcup_{n=0}^{\infty}\bigcup_{(\zeta_1,\dots,\zeta_n)\in\left\{0,1\right\}^n}I_k^{\zeta_1,\dots,\zeta_n\xi_1\dots\xi_{n_0}}$ has full Lebesgue measure in $\Lambda_{\theta}$.
\end{proof}

\begin{lemma}\label{lalpha}
The function $\alpha(\theta)<s-1$ for every $\theta$ such that $\tan\theta\in\mathbb{Q}^{+}$.
\end{lemma}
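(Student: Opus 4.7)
The plan is to prove $\alpha(\theta)\leq s-1$ via Jensen's inequality, then promote this to a strict inequality via strict convexity of the pressure function from (\ref{emxpressure}), which in turn rests on the primitivity of the matrix cocycle established in Proposition \ref{pmain}.

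For the upper bound, Lemma \ref{lmxbasic} states that each column of $\mx{A}_0+\mx{A}_1$ sums to $3$, so $\underline{e}(\mx{A}_0+\mx{A}_1) = 3\underline{e}$ and
$$\sum_{(\xi_1,\ldots,\xi_n)\in\{0,1\}^n}\underline{e}\mx{A}_{\xi_1}\cdots\mx{A}_{\xi_n}\underline{e} \;=\; \underline{e}(\mx{A}_0+\mx{A}_1)^n\underline{e} \;=\; 3^n(p+q).$$
Applying Jensen's inequality (concavity of the logarithm) to the uniform measure on $\{0,1\}^n$ yields
$$\frac{1}{2^n}\sum_\xi \log\underline{e}\mx{A}_{\xi_1}\cdots\mx{A}_{\xi_n}\underline{e} \;\leq\; n\log\tfrac{3}{2} + \log(p+q),$$
and dividing by $n\log 2$ and invoking (\ref{ealpha2}) gives $\alpha(\theta)\leq s-1$.

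For the strict inequality I will interpret $\alpha(\theta)\log 2$ as the derivative at zero of the pressure $P$ from (\ref{emxpressure}). Direct computation gives $P(0) = \log 2$ and $P(1) = \log 3$, so the chord from $(0,P(0))$ to $(1,P(1))$ has slope exactly $\log(3/2) = (s-1)\log 2$. Differentiating inside the defining limit of $P$,
$$P'(0) \;=\; \lim_{n\to\infty}\frac{1}{n\cdot 2^n}\sum_\xi \log\underline{e}\mx{A}_{\xi_1}\cdots\mx{A}_{\xi_n}\underline{e} \;=\; \alpha(\theta)\log 2.$$
Since $P$ is automatically convex (as a limit of log-sums of exponentials in $t$), the bound $P'(0)\leq \log(3/2)$ is immediate, reproducing the previous estimate; the required strict inequality thus reduces to showing that $P$ is \emph{not} affine on $[0,1]$.

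The main obstacle is to rule out this affine possibility, and here is where Proposition \ref{pmain} is essential. That proposition furnishes a word $\xi^*$ of length $n_0$ with every entry of $\mx{A}_{\xi_1^*}\cdots\mx{A}_{\xi_{n_0}^*}$ strictly positive, making $(\mx{A}_0,\mx{A}_1)$ a primitive non-negative matrix cocycle. By the thermodynamic formalism for products of non-negative matrices (\cite{F1,F2}), $P$ is then real-analytic and strictly convex on $\R$ unless the quantities $\underline{e}\mx{A}_{\xi_1}\cdots\mx{A}_{\xi_n}\underline{e}$ are asymptotically constant in $\xi$. The latter degeneracy is easily excluded from (\ref{eprojmatr2}): already at length one, when $p+q$ is odd the identity $\underline{e}\mx{A}_0\underline{e} + \underline{e}\mx{A}_1\underline{e} = 3(p+q)$ forces $\underline{e}\mx{A}_0\underline{e}\neq\underline{e}\mx{A}_1\underline{e}$, while in the even case a short inspection of length-two products produces two values (exactly as in the explicit computation $4\neq 5$ one finds when $p=q=1$). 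This overall strategy parallels that of Manning and Simon \cite[Theorem 9]{MS}, where the analogous strict inequality was established in the Sierpi\'nski carpet setting.
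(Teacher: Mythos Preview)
Your overall strategy—Jensen for the non-strict bound, then strict convexity of $P$ on $[0,1]$ to upgrade it—is sound and is indeed in the spirit of the Manning--Simon argument to which the paper defers. The Jensen step is correct as written. The strict-inequality step, however, has a genuine gap.

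A minor point first: the identification $P'(0)=\alpha(\theta)\log 2$ by ``differentiating inside the defining limit'' is not automatic. One needs $P$ differentiable at $0$ together with the fact that pointwise convergence of convex functions forces convergence of derivatives at points of differentiability. The paper's Lemma~\ref{lpresproperty} only asserts differentiability for $t>0$, and Lemma~\ref{lpreslim} (which does compute $P'(0^+)$) uses Lemma~\ref{lalpha} in its proof, so invoking it here would be circular. Under primitivity this can be repaired via Feng's formalism, but it needs to be said.

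The real problem is your criterion for strict convexity. For a primitive cocycle, $P$ is affine on $[0,1]$ exactly when the equilibrium states $\mu_0$ (Bernoulli $(\tfrac12,\tfrac12)$) and $\mu_1$ coincide; by the Gibbs property this amounts to $\underline{e}\,\mx{A}_{\xi_1}\cdots\mx{A}_{\xi_n}\underline{e}\asymp (3/2)^{n}$ with constants \emph{uniform in $n$ and in the word $\xi$}. In particular it forces $\rho(\mx{A}_\omega)=(3/2)^{|\omega|}$ for every finite word $\omega$. Your test—that $\underline{e}\mx{A}_0\underline{e}\ne\underline{e}\mx{A}_1\underline{e}$ at level one, or that two level-two products differ—does \emph{not} exclude this degeneracy: distinct finite-level values are perfectly compatible with uniform bounds $D^{-1}(3/2)^n\le \underline{e}\mx{A}_\xi\underline{e}\le D(3/2)^n$. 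What is actually required is to exhibit some word $\omega$ with $\rho(\mx{A}_\omega)\ne(3/2)^{|\omega|}$ (or an equivalent non-degeneracy statement), and carrying this out uniformly over all coprime $(p,q)$ is precisely the content of \cite[\S3.4--3.5]{MS}; it is not a one-line parity check. Your treatment of the case $p+q$ even is likewise only a single worked example rather than an argument.
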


The proof of Lemma \ref{lalpha} coincides with the proof of \cite[Theorem 9]{MS}, (see \cite[Subsection 3.4, Subsection 3.5]{MS}), therefore we omit it.

Finally, we have to state a proposition about the coincidence of the Hausdorff and box dimension for ``typical'' points before we prove Theorem \ref{ttyp}.

\begin{prop}\label{phdbdcoin}
Let $p,q\in\N$ be relative primes and let $\theta\in(0,\frac{\pi}{2})$ be such that $\tan\theta=\frac{p}{q}$. Let $\eta$ be a left shift invariant measure on $\Xi$ such that
\begin{equation}\label{eassumption}
\eta\left(\bigcup_{n=0}^{\infty}\bigcup_{(\zeta_1,\dots,\zeta_n)\in\left\{0,1\right\}^n}[\zeta_1,\dots,\zeta_n\xi_1\dots\xi_{n_0}]\right)=1,
\end{equation}
where $(\xi_1,\dots,\xi_{n_0})$ is as in Proposition \ref{pmain}. Let $\eta=\sum_{k=1}^{p+q}\eta_k$ be an arbitrary positive decomposition of $\eta$. (That is, $\eta_k([\zeta_1,\dots,\zeta_n])>0$ for any $1\leq k\leq p+q$ and any cylinder set.) Then for $\lambda$-almost every $a\in\Lambda_{\theta}$
\[
\dim_HE_{\theta,a}=\dim_BE_{\theta,a},
\]
where
\[
\lambda=\sum_{k=1}^{p+q}\left.\eta_k\circ\pi^{-1}\circ h_k\right|_{I_k}.
\]
\end{prop}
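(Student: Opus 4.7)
The goal is to prove $\dim_H E_{\theta,a}=\dim_B E_{\theta,a}$ for $\lambda$-almost every $a$. Since $\dim_H\leq\underline{\dim}_B\leq\overline{\dim}_B$ holds trivially, we must both establish the existence of the box dimension and produce a matching lower bound for $\dim_H$. The plan is to construct a Frostman-type measure on $E_{\theta,a}$ as a weak-$\ast$ limit of the uniform distributions on $\mathcal{F}_n(\theta,a)$, and to leverage the positive matrix product from Proposition \ref{pmain} to control its mass on cylinders.

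By shift-invariance of $\eta$ together with hypothesis (\ref{eassumption}), $\eta$-a.e.\ $\omega\in\Xi$ contains the good block $(\xi_1,\dots,\xi_{n_0})$ at infinitely many starting positions $m_1<m_2<\cdots$. Via ergodic decomposition, Birkhoff's theorem forces a positive Cesaro frequency of occurrences, and consequently $(m_{k+1}-m_k)/m_k\to 0$ (otherwise some interval $[m_k,(1+\delta)m_k]$ would avoid the good block along a subsequence, contradicting positive density). The function $G_n(\omega)=\log\underline{e}\mx{A}_{\omega_1}\cdots\mx{A}_{\omega_n}\underline{e}$ is subadditive along the shift via the entrywise bound $\underline{e}(BC)\underline{e}\leq\|\underline{e}B\|_1\|C\underline{e}\|_1$, so Kingman's theorem yields $G_n(\omega)/n\to\gamma(\omega)$ a.s. Moreover, after any good block the entries of the row vector $\underline{e}_k\mx{A}_{\omega_1}\cdots\mx{A}_{\omega_n}$ become comparable to its $\ell^1$-norm $N_n(\omega,k)$ up to the factor $c_0:=\pi_0/((p+q)\pi_1)$, where $\pi_0,\pi_1$ are the minimum and maximum entries of the positive matrix $\mx{A}_{\xi_1}\cdots\mx{A}_{\xi_{n_0}}$. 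Hence $\log N_n(\omega,k)=G_n(\omega)+O(1)$ for large $n$, and Lemma \ref{lmxbox} gives $\overline{\dim}_B E_{\theta,a}=\underline{\dim}_B E_{\theta,a}=\beta:=\gamma(\omega)/\log 2$ at $\lambda$-a.e.\ $a$.

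Let $\mu_n$ be the uniform probability measure on $\mathcal{F}_n(\theta,a)$ (mass $1/N_n$ on each cylinder $\Lambda_{i_1\cdots i_n}$) and let $\mu$ be any weak-$\ast$ subsequential limit, supported on $E_{\theta,a}$. For $(i_1,\dots,i_m)\in\mathcal{F}_m$ with associated index $j$ (i.e.\ $f_{i_1\cdots i_m}(I_j)=I_k^{\omega_1,\dots,\omega_m}$),
\[
\mu_n(\Lambda_{i_1\cdots i_m})=\frac{\underline{e}_j\mx{A}_{\omega_{m+1}}\cdots\mx{A}_{\omega_n}\underline{e}}{\sum_{j'}(\underline{e}_k\mx{A}_{\omega_1}\cdots\mx{A}_{\omega_m})_{j'}\,\underline{e}_{j'}\mx{A}_{\omega_{m+1}}\cdots\mx{A}_{\omega_n}\underline{e}}.
\]
When $m$ is a \emph{good-block-ending time} (i.e.\ $(\omega_{m-n_0+1},\dots,\omega_m)$ equals the good block), the row vector in the denominator has each coordinate at least $c_0 N_m$, yielding $\mu_n(\Lambda_{i_1\cdots i_m})\leq(c_0 N_m)^{-1}$ uniformly in $n\geq m$. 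Approximating the cylinder from the outside by open neighbourhoods (the $\mu_n$-mass on the $O(1)$ boundary cylinders is $O(1)/N_n=o(1)$) and applying Portmanteau's theorem yields $\mu(\Lambda_{i_1\cdots i_m})\leq C/N_m$ at every such $m$.

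For $x\in E_{\theta,a}$ and $r>0$, take the largest good-block-ending time $m$ with $2^{-m}\geq r$. Then $B_r(x)$ meets $O(1)$ cylinders of level $m$, so $\mu(B_r(x))\leq C'/N_m$. The density estimate $m_{k+1}-m_k=o(m_k)$ gives $m/|\log_2 r|\to 1$, while $\log N_m/(m\log 2)\to\beta$; therefore $\underline{d}_\mu(x)\geq\beta$ for every $x\in\mathrm{supp}(\mu)$. The mass distribution principle then yields $\dim_H E_{\theta,a}\geq\beta=\dim_B E_{\theta,a}$, and the reverse inequality is trivial. The chief technical obstacle lies in this final interpolation: the explicit cylinder bound only holds at the sparse good-block-ending scales, and converting it into a local-dimension estimate at all radii $r$ requires both the asymptotic density of the good block (to get $m/|\log_2 r|\to 1$) and the existence of $\lim G_n/n$; a secondary issue is invoking the ergodic decomposition since $\eta$ is only assumed shift-invariant, not ergodic.
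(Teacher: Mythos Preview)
Your argument is essentially correct and takes a genuinely different route from the paper. The paper does not build a Frostman measure at all: instead it passes to the torus via $P(x,y)=(x,(px-qy)\bmod 1)$, observes that $P(\Lambda)$ is compact and $T_2\times T_2$-invariant, and then invokes Ledrappier's lemma (quoted from Kenyon--Peres) which states that for such sets the Hausdorff and box dimensions of the vertical fibres agree for any $T_2$-invariant measure on the base. This immediately yields $\dim_H F_{\theta,a}=\dim_B F_{\theta,a}$ for $\lambda$-a.e.\ $a$, and the positive matrix product from Proposition~\ref{pmain} is used only at the very end, to pass from the union $F_{\theta,a}$ of parallel slices to the single slice $E_{\theta,a}$. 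Your approach, by contrast, uses the positive block throughout: first to show $\log N_n(\omega,k)/n$ inherits the Kingman limit of $G_n$, then to get the uniform cylinder bound $\mu(\Lambda_{i_1\cdots i_m})\leq C/N_m$ at good-block-ending times, and finally via positive density of the good block to interpolate to all scales. The paper's proof is considerably shorter but outsources the real work to Ledrappier's lemma; yours is self-contained and actually exhibits a measure of full dimension on $E_{\theta,a}$, which is extra information. Two small points of sloppiness worth tightening: the claim ``$\log N_n(\omega,k)=G_n(\omega)+O(1)$'' is not literally true (the constant depends on the first good-block time, which varies with $\omega$), though the weaker statement $\log N_n/n\to\gamma(\omega)$ that you actually need does follow; and in the Portmanteau step the extra mass in a fixed open neighbourhood $U_\delta$ of the cylinder is bounded by $O(1)/N_m$ (counting neighbouring level-$m$ cylinders in $\mathcal F_m$), not $O(1)/N_n$ as written.
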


The following lemma appears in a paper of Kenyon and Peres \cite[Proposition 2.6]{KP}, the proof is attributed to Ledrappier.  We state the lemma only for our special case.

\begin{lemma}[Ledrappier]\label{lboxhd}
Let $T_2$ be the endomorphism $T_2(x)=2x\mod1$ on the one-dimensional torus $S^1$. Assume that $F\subset S^1\times S^1=\mathbb{T}^2$ is compact and invariant under $T_2\times T_2$ and $\nu$ a $T_2$-invariant probability measure on $S^1$. Then for $\nu$-a.e. $x$
\[
\dim_H\proj^{-1}(x)=\dim_B\proj^{-1}(x),
\]
where $\proj:F\mapsto S^1$ is the projection to the second coordinate.
\end{lemma}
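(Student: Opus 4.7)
My plan is to recode $F$ symbolically via binary expansion and read both dimensions of the fibre $F_x$ off the exponential growth rate of admissible prefixes over $\omega:=\pi^{-1}(x)$. Let $\Sigma=\{0,1\}^{\mathbb N}$, $\sigma$ the left shift and $\pi:\Sigma\to S^1$ the binary expansion, so that $\pi\circ\sigma=T_2\circ\pi$. I would lift $F$ to the closed $\sigma\times\sigma$-invariant set $\widetilde F=(\pi\times\pi)^{-1}(F)$ and, for $\omega\in\Sigma$, set
$$Z_n(\omega)=\sharp\bigl\{\tau\in\{0,1\}^n:\exists\,\rho\in\Sigma\text{ with }(\rho,\omega)\in\widetilde F\text{ and }\rho|_n=\tau\bigr\},$$
i.e.\ the number of length-$n$ prefixes appearing in the fibre $\widetilde F^\omega=\{\rho:(\rho,\omega)\in\widetilde F\}$. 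A covering argument entirely analogous to Lemma~\ref{lbdgs} gives, for $x=\pi(\omega)$,
$$\overline{\dim}_B F_x=\limsup_{n\to\infty}\frac{\log Z_n(\omega)}{n\log 2},\quad \underline{\dim}_B F_x=\liminf_{n\to\infty}\frac{\log Z_n(\omega)}{n\log 2}.$$

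Next I would establish the subadditive cocycle inequality $Z_{n+m}(\omega)\le Z_n(\omega)\,Z_m(\sigma^n\omega)$: each admissible length-$(n+m)$ prefix above $\omega$ decomposes into a length-$n$ head admissible above $\omega$ and a length-$m$ tail which, by the $\sigma\times\sigma$-invariance of $\widetilde F$, is admissible above $\sigma^n\omega$. Kingman's subadditive ergodic theorem, applied to $(\Sigma,\sigma,\pi^{-1}_*\nu)$ (recall $\nu$ is assumed $T_2$-invariant but not ergodic), then produces a $\sigma$-invariant function $L$ with $\lim_n n^{-1}\log Z_n(\omega)=L(\omega)$ for $\nu$-a.e.\ $\omega$. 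In particular $\dim_B F_x$ exists and equals $L(\omega)/\log 2$ for $\nu$-a.e.\ $x$, yielding $\overline{\dim}_B F_x=\underline{\dim}_B F_x\ge\dim_H F_x$ by the trivial inequality.

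For the matching lower bound $\dim_H F_x\ge L(\omega)/\log 2$ I would invoke the mass distribution principle: construct, for $\nu$-a.e.\ $\omega$, a Borel probability measure $\mu_\omega$ on $\widetilde F^\omega$ whose $\mu_\omega$-a.s.\ local dimension equals $L(\omega)/\log 2$. The natural candidate is the weak-$*$ limit of the uniform measures on the $Z_n(\omega)$ admissible length-$n$ prefixes, or equivalently the measure obtained by splitting mass at each node of the prefix tree proportionally to the number of descendants at a horizon $N$ and sending $N\to\infty$. The pair $(\omega,\rho)$ then evolves under the skew-product $\sigma\times\sigma$ on $\widetilde F$ equipped with the $\sigma\times\sigma$-invariant probability measure $\int\mu_\omega\,d\nu(\omega)$; a second application of Kingman's theorem, now to the cocycle $-\log\mu_\omega(\text{cylinder of length }n)$, should deliver the desired almost-sure local dimension $L(\omega)/\log 2$. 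I expect this final step to be the main obstacle: the naive uniform-splitting rule gives, by Jensen's inequality, strictly less entropy than $\log Z_n(\omega)$ at scale $n$, so one must work with the descendant-weighted construction and carefully justify both its weak-$*$ convergence and the fact that the limiting cocycle attains the correct almost-sure value. Combined with $\dim_H\le\overline{\dim}_B$, this yields $\dim_H F_x=\dim_B F_x$ for $\nu$-a.e.\ $x$.
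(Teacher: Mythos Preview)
The paper does not prove this lemma; it is quoted from Kenyon--Peres \cite[Proposition~2.6]{KP}, where the argument is attributed to Ledrappier. Your outline tracks that argument. The symbolic lift, the subadditivity $\log Z_{n+m}(\omega)\le\log Z_n(\omega)+\log Z_m(\sigma^n\omega)$, and the application of Kingman's theorem to obtain the a.e.\ existence of $\dim_B F_x=L(\omega)/\log 2$ are all correct and constitute exactly the first half of Ledrappier's proof.

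The step you flag as ``the main obstacle'' is a genuine gap, and the difficulty is somewhat different from how you frame it. The measure $\int\mu_\omega\,d\nu(\omega)$ you propose has no reason to be $\sigma\times\sigma$-invariant: invariance would force the compatibility $\mu_{\omega'}=\sum_{\omega\in\sigma^{-1}\omega'}p(\omega)\,\sigma_*\mu_\omega$ among the fibre measures, and neither the uniform-at-horizon-$N$ limit nor the descendant-weighted construction yields such a family. Without an invariant ambient measure there is no dynamical system on which to run your second ergodic theorem, and even granting invariance one must still identify the resulting fibre entropy with $L(\omega)$. One way to close the gap is to invoke the relative variational principle of Ledrappier--Walters: it supplies a $\sigma\times\sigma$-invariant probability on $\widetilde F$ projecting to the lift of $\nu$ whose relative entropy equals the relative topological entropy $\int L\,d\nu$, after which a relative Shannon--McMillan--Breiman argument gives the conditional measures the correct local dimension. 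The original argument in Kenyon--Peres is more self-contained and avoids building a global invariant measure; it extracts enough uniformity from the a.e.\ convergence of $n^{-1}\log Z_n$ to carry out a direct Frostman construction on the fibre tree. Either completion requires substantially more than your sketch provides.
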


\begin{proof}[Proof of Proposition \ref{phdbdcoin}]
It is easy to see that
\[
F_{\theta,a}=\Lambda\cap\left\{(x,y):px-qy\equiv -qa\mod1\right\}.
\]
Let $P:(x,y)\mapsto(x,(px-qy)\mod1)$ be a map of $\mathbb{T}^2$ into itself. Then
\[
\underline{\dim}_BP(F_{\theta,a})=\underline{\dim}_BF_{\theta,a},\ \overline{\dim}_BP(F_{\theta,a})=\overline{\dim}_BF_{\theta,a}\text{ and }\dim_HP(F_{\theta,a})=\dim_HF_{\theta,a}.
\]
and $P(\Lambda)\subset\mathbb{T}^2$ is compact and $T_2\times T_2$-invariant. Moreover, let $Q(a)=-qa\mod1$ be the mapping $\Lambda_{\theta}$ into $S^1$. Since $\eta$ is left shift invariant then $\lambda\circ Q^{-1}=\eta\circ\pi^{-1}$ is $T_2$ invariant. Since
\[
\proj^{-1}(-qa\mod1)=P(F_{\theta,a})
\]
by Lemma \ref{lboxhd} we have for $\lambda$-almost all $a\in\Lambda_{\theta}$ that
\begin{equation}\label{eprcoin}
\dim_HF_{\theta,a}=\dim_BF_{\theta,a}.
\end{equation}

Let $(\xi_1,\dots,\xi_{n_0})\in\left\{0,1\right\}^{n_0}$ be as in Proposition \ref{pmain}. Then by assumptions we have that for every $1\leq k\leq p+q$ and every finite length word $(\zeta_1,\dots,\zeta_n)\in\left\{0,1\right\}^*$ the measure $\lambda(I_k^{\zeta_1,\dots,\zeta_n\xi_1\dots\xi_{n_0}})>0$ and for $\lambda$-almost every $a\in I_k^{\zeta_1,\dots,\zeta_n\xi_1\dots\xi_{n_0}}$ the equation (\ref{eprcoin}) holds. Moreover, the fact that the matrix $\mx{A}_{\xi_1}\cdots\mx{A}_{\xi_{n_0}}$ have strictly positive coefficient implies that
\[
\dim_BE_{\theta,a}=\dim_BF_{\theta,a'}=\dim_HF_{\theta,a'}=\dim_HE_{\theta,a},
\]
where $a'=2^{n+n_0}\left(a-1+\frac{k-1}{q}\right)+\frac{1}{q}\sum_{i=1}^n2^{n+n_0-i}\zeta_i+\frac{1}{q}\sum_{i=1}^{n_0}2^{n_0-i}\xi_i+1-\frac{k-1}{q}$. The proof is completed by applying the assumption (\ref{eassumption}).
\end{proof}

\begin{proof}[Proof of Theorem \ref{ttyp}]
Theorem \ref{ttyp}(\ref{ttyp1}) is an easy consequence of Proposition \ref{pLebtypbox}, Lemma \ref{lalpha} and Proposition \ref{phdbdcoin}.

The equalities of Theorem \ref{ttyp}(\ref{ttyp2}) follow from Corollary \ref{cbox} and Proposition \ref{phdbdcoin}. It is enough to prove that $\beta(\theta)>s-1$. To prove this fact, we use the method of \cite{R}.

Define $\eta$ probability measure on $\Xi$ as \[
\eta([\xi_1,\dots,\xi_n])=\frac{1}{3^n}\underline{e}\mx{A}_{\xi_1}\cdots\mx{A}_{\xi_n}\underline{p},
\]
where $\underline{p}$ is the unique probability vector such that $\frac{1}{3}\left(\mx{A}_0+\mx{A}_1\right)\underline{p}=\underline{p}$. Then it is easy to see that $\eta$ is left shift invariant, moreover, by Perron-Frobenius Theorem, $\eta$ is mixing and therefore, an ergodic probability measure. Decompose $\eta=\sum_{k=1}^{p+q}\eta_k$ as
\[
\eta_k([\xi_1,\dots,\xi_n])=\frac{1}{3^n}\underline{e}_k\mx{A}_{\xi_1}\cdots\mx{A}_{\xi_n}\underline{p}
\]
for every cylinder set $[\xi_1,\dots,\xi_n]$. Let us recall that $\nu_{\theta}$ is the projection of the natural self-similar measure on $\Lambda$. Observe that $\left.\nu_{\theta}\right|_{I_k}\circ h_k=\eta_k\circ\pi^{-1}$ and define $\widetilde{\nu}_\theta(.)=\sum_{k=1}^{p+q}\left.\nu_{\theta}\right|_{I_k}\circ h_k=\eta\circ\pi^{-1}$. Then $\widetilde{\nu}_{\theta}$ is a $T_2$- invariant probability measure satisfying the assumptions of Proposition \ref{phdbdcoin}.

By the Volume lemma \cite[Theorems 10.4.1, 10.4.2]{PU} we have
\begin{equation}\label{elimbeta}
\dim_H\widetilde{\nu}_{\theta}=\lim_{n\rightarrow\infty}-\frac{1}{n\log2}\sum_{\xi_1,\dots,\xi_n=0}^1\frac{1}{3^n}\underline{e}\mx{A}_{\xi_1}\cdots\mx{A}_{\xi_n}\underline{p}\log\left(\frac{1}{3^n}\underline{e}\mx{A}_{\xi_1}\cdots\mx{A}_{\xi_n}\underline{p}\right)
\end{equation}
On the other hand, since $\left.\nu_{\theta}\right|_{I_k}\circ h_k\ll\widetilde{\nu}_{\theta}$ for every $1\leq k\leq p+q$ which implies that $\dim_H\left.\nu_{\theta}\right|_{I_k}=\dim_H\left.\nu_{\theta}\right|_{I_k}\circ h_k\leq\dim_H\widetilde{\nu}_{\theta}$. However, \[
\dim_H\widetilde{\nu}_{\theta}=\inf_{1\leq k\leq p+q}\dim_H\left.\nu_{\theta}\right|_{I_k}\circ h_k=\inf_{1\leq k\leq p+q}\dim_H\left.\nu_{\theta}\right|_{I_k}=\dim_H\nu_{\theta}.
\]

By Lemma \ref{lalpha} there exists $\delta>0$ such that for sufficiently large $n$ there exists a sequence $(\xi_1,\dots,\xi_n)$ with
\[
\underline{e}\mx{A}_{\xi_1}\cdots\mx{A}_{\xi_n}\underline{p}<\frac{1}{2^{n+\delta n}}.
\]
This implies that the limit in (\ref{elimbeta}) is strictly less than $1$. The proof can be finished by Corollary \ref{cbox}.
\end{proof}

\begin{proof}[Proof of Proposition \ref{pcalc}]
The statement of the proposition follows from Proposition \ref{pLebtypbox} and the proof of Theorem \ref{ttyp}(\ref{ttyp2}).
\end{proof}

\section{Proof of Theorem \ref{tspectra}}\label{stspectra}

In this section we apply the results of \cite{F1,F2,FL2} to the matrices $A_0,A_1$ to obtain a multifractal description of the dimension of the slices.  Let \[
\widetilde{\Lambda}_{\theta}=\left\{a=1-\frac{k-1}{q}-\frac{1}{q}\sum_{i=1}^{\infty}\frac{\xi_i}{2^i}\in\Lambda_{\theta}:\exists k\geq1,\  \mx{A}_{\xi_1}\cdots\mx{A}_{\xi_k}>0\right\}.
\]
By (\ref{epmain}) we have
\begin{equation}\label{eb0}
\overline{\dim}_B\Lambda_{\theta}\backslash\widetilde{\Lambda}_{\theta}=0.
\end{equation}
Moreover, we can reformulate Lemma \ref{lmxbox}.

\begin{lemma}\label{lmxbox2}
Let $\theta$ and $a\in\widetilde{\Lambda}_{\theta}$ be such that $\tan\theta=\frac{p}{q}$ and
\[
a=1-\frac{k-1}{q}-\frac{1}{q}\sum_{i=1}^{\infty}\frac{\xi_i}{2^i}
\]
then
\[
\underline{\dim}_B E_{\theta,a}=\liminf_{n\rightarrow\infty}\frac{\log\underline{e}\mx{A}_{\xi_1}\cdots\mx{A}_{\xi_n}\underline{e}}{n\log2}\text{ and }\overline{\dim}_B E_{\theta,a}=\limsup_{n\rightarrow\infty}\frac{\log\underline{e}\mx{A}_{\xi_1}\cdots\mx{A}_{\xi_n}\underline{e}}{n\log2}.
\]
\end{lemma}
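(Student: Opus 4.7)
The plan is to reduce Lemma \ref{lmxbox2} to Lemma \ref{lmxbox} by exploiting the defining property of $\widetilde{\Lambda}_\theta$. Since $a\in\widetilde{\Lambda}_\theta$, there exists $n_1 \geq 1$ such that the matrix $\mx{B}:=\mx{A}_{\xi_1}\cdots\mx{A}_{\xi_{n_1}}$ has all entries strictly positive; set $m:=\min_{i,j}\mx{B}_{i,j} > 0$ and $M:=\max_{i,j}\mx{B}_{i,j}$. These constants depend on $a$ and $\theta$ but not on $n$, and are the only ingredients needed to replace the vector $\underline{e}_k$ appearing in Lemma \ref{lmxbox} by the vector $\underline{e}$.

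First I would establish that, for every $n\geq n_1$ and every $1\leq k\leq p+q$, the quantities $\underline{e}_k\mx{A}_{\xi_1}\cdots\mx{A}_{\xi_n}\underline{e}$ and $\underline{e}\mx{A}_{\xi_1}\cdots\mx{A}_{\xi_n}\underline{e}$ are comparable up to multiplicative constants independent of $n$. Writing $\mx{C}:=\mx{A}_{\xi_{n_1+1}}\cdots\mx{A}_{\xi_n}$, so that the full product is $\mx{B}\mx{C}$, a direct expansion using $m\leq\mx{B}_{k,l}\leq M$ gives
\[
m\,\underline{e}\mx{C}\underline{e}\ \leq\ \underline{e}_k\mx{B}\mx{C}\underline{e}\ \leq\ M\,\underline{e}\mx{C}\underline{e},
\]
for every $k$. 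Summing over $k$ yields $(p+q)m\,\underline{e}\mx{C}\underline{e}\leq\underline{e}\mx{B}\mx{C}\underline{e}\leq(p+q)M\,\underline{e}\mx{C}\underline{e}$, and combining the two chains of inequalities I obtain
\[
\frac{m}{(p+q)M}\ \leq\ \frac{\underline{e}_k\mx{A}_{\xi_1}\cdots\mx{A}_{\xi_n}\underline{e}}{\underline{e}\mx{A}_{\xi_1}\cdots\mx{A}_{\xi_n}\underline{e}}\ \leq\ \frac{M}{(p+q)m}.
\]

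Taking logarithms, dividing by $n\log 2$ and letting $n\to\infty$, the contribution of $\log\bigl(m/((p+q)M)\bigr)$ and $\log\bigl(M/((p+q)m)\bigr)$ vanishes, so
\[
\liminf_{n\to\infty}\frac{\log\underline{e}_k\mx{A}_{\xi_1}\cdots\mx{A}_{\xi_n}\underline{e}}{n\log 2}\ =\ \liminf_{n\to\infty}\frac{\log\underline{e}\mx{A}_{\xi_1}\cdots\mx{A}_{\xi_n}\underline{e}}{n\log 2},
\]
and analogously for the limit superior. Applying Lemma \ref{lmxbox} concludes the proof.

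There is no genuine obstacle here; the statement is a cosmetic strengthening of Lemma \ref{lmxbox} made possible by restricting to $\widetilde{\Lambda}_\theta$, where the eventual strict positivity of a partial matrix product permits one to trade the basis vector $\underline{e}_k$ for the all-ones vector $\underline{e}$ with only bounded multiplicative distortion, invisible on the scale $(n\log 2)^{-1}\log(\cdot)$.
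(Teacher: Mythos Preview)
Your argument is correct and is precisely the intended reduction: the paper does not give an explicit proof of this lemma, presenting it simply as a reformulation of Lemma~\ref{lmxbox}, and your comparability estimate between $\underline{e}_k\mx{A}_{\xi_1}\cdots\mx{A}_{\xi_n}\underline{e}$ and $\underline{e}\mx{A}_{\xi_1}\cdots\mx{A}_{\xi_n}\underline{e}$ via the eventual strict positivity of a partial product is exactly how one makes that reformulation rigorous.
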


\begin{proof}[Proof of Proposition \ref{cspectra}(\ref{tspectra1})]
As a consequence of Lemma \ref{lmxbox2} and (\ref{eb0}) we have
\begin{multline*}
\dim_H\left\{a\in\Lambda_{\theta}:\dim_BE_{\theta,a}=\alpha\right\}=\\
\dim_H\left\{1-\frac{k-1}{q}-\frac{1}{q}\sum_{i=1}^{\infty}\frac{\xi_i}{2^i}\in\widetilde{\Lambda}_{\theta}:\lim_{n\rightarrow\infty}\frac{\log\underline{e}\mx{A}_{\xi_1}\cdots\mx{A}_{\xi_n}\underline{e}}{n}=\alpha\log2\right\}=\\
\dim_H\left\{(\xi_1,\xi_2,\dots)\in\Xi:\lim_{n\rightarrow\infty}\frac{\log\underline{e}\mx{A}_{\xi_1}\cdots\mx{A}_{\xi_n}\underline{e}}{n}=\alpha\log2\right\}.
\end{multline*}
By Proposition \ref{pmain}, one can finish the proof using \cite[Theorem 1.1]{F2}.
\end{proof}

By \cite[Lemma 2.2]{F2} and \cite[Theorem 3.3]{FL2} we state the following lemma for the pressure function.

\begin{lemma}\label{lpresproperty}
Let $P(t)$ be defined as in (\ref{emxpressure}). Then $P(t)$ is monotone increasing, convex and continuous for $t\in\R$. Moreover, for $t>0$ the pressure is differentiable.
\end{lemma}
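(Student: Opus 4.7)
The plan is to verify that the pressure function $P(t)$ defined in (\ref{emxpressure}) falls under the hypotheses of \cite[Lemma 2.2]{F2} and \cite[Theorem 3.3]{FL2}, at which point all four assertions (including the existence of the defining limit) follow by quoting those results directly. The two inputs needed are that $\mx{A}_0,\mx{A}_1$ are non-negative matrices and that some finite product of them is strictly positive; the first is immediate from (\ref{eprojmatr}) and the second is exactly the content of Proposition \ref{pmain}.

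Granted the setup, monotonicity is essentially immediate from Lemma \ref{lmxatleast}: for every word $(\xi_1,\dots,\xi_n)\in\{0,1\}^n$ the product $\mx{A}_{\xi_1}\cdots\mx{A}_{\xi_n}$ inherits from the iteration argument in the proof of Proposition \ref{pmain} the property that each of its $p+q$ columns contains at least one nonzero (integer) entry, so
\[
\underline{e}\mx{A}_{\xi_1}\cdots\mx{A}_{\xi_n}\underline{e}\geq p+q\geq 1.
\]
Hence every summand of the partition function $Z_n(t)=\sum_{\xi}(\underline{e}\mx{A}_{\xi_1}\cdots\mx{A}_{\xi_n}\underline{e})^t$ is non-decreasing in $t$, and monotonicity of $P$ passes to the limit. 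Convexity follows because for each fixed $n$ the map $t\mapsto\log Z_n(t)$ is convex in $t$ by H\"older's inequality (it is the log-moment generating function of the counting measure on $\{0,1\}^n$), and a pointwise limit of convex functions is convex. Continuity on $\R$ is then automatic, since a finite-valued convex function on an open interval is continuous.

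The only genuinely subtle claim is the differentiability of $P$ on $(0,\infty)$, which is where I expect the main obstacle to lie, and I would invoke \cite[Theorem 3.3]{FL2} directly for it rather than reprove anything. That theorem constructs, precisely under the positivity condition supplied by Proposition \ref{pmain}, a unique ergodic equilibrium state for the sequence of non-negative matrix products $\mx{A}_{\xi_1}\cdots\mx{A}_{\xi_n}$ and identifies $P'(t)$ as an average against this state. The reason the statement restricts to $t>0$ is that differentiability at $t\leq 0$ can genuinely fail due to phase transitions, since for negative $t$ the sum $Z_n(t)$ is dominated by atypical configurations for which a column of $\mx{A}_{\xi_1}\cdots\mx{A}_{\xi_n}$ picks up only a very small total, whereas for $t>0$ the primitivity granted by Proposition \ref{pmain} is enough to rule this out and yield smoothness.
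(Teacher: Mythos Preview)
Your proposal is correct and follows essentially the same route as the paper: the paper simply records that Lemma \ref{lpresproperty} is a consequence of \cite[Lemma 2.2]{F2} and \cite[Theorem 3.3]{FL2}, precisely the two references you invoke, with Proposition \ref{pmain} supplying the primitivity hypothesis. Your write-up additionally spells out the elementary arguments for monotonicity, convexity and continuity, which the paper leaves implicit in the citation; this is harmless extra detail rather than a different approach.
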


\begin{lemma}\label{lgamma1}
For every $0\leq\delta\leq\alpha(\theta)$,
\[
\dim_H\left\{a\in\Lambda_{\theta}:\dim_HE_{\theta,a}\geq\delta\right\}=1.
\]
\end{lemma}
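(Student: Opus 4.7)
The plan is to reduce the lemma directly to the Lebesgue-typical dimension result proved earlier in the paper. The key observation is that the projection set $\Lambda_\theta = [-\tan\theta,1]$ is a genuine closed interval of positive one-dimensional Lebesgue measure. Combining Proposition \ref{pLebtypbox} with Proposition \ref{phdbdcoin} (applied exactly as in the proof of Theorem \ref{ttyp}(\ref{ttyp1})), one obtains the Lebesgue-typical equality
\[
\dim_H E_{\theta,a} = \dim_B E_{\theta,a} = \alpha(\theta)
\]
for Lebesgue-almost every $a\in\Lambda_\theta$.

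Fix $\delta \in [0,\alpha(\theta)]$ and set $A_\delta := \{a\in\Lambda_\theta : \dim_H E_{\theta,a} \geq \delta\}$. By the previous paragraph, $A_\delta$ contains a subset of full Lebesgue measure inside the interval $\Lambda_\theta$, hence $A_\delta$ itself has positive (in fact full) Lebesgue measure in $\Lambda_\theta$. Any subset of $\mathbb{R}$ of positive Lebesgue measure has Hausdorff dimension $1$, so $\dim_H A_\delta \geq 1$. Trivially $A_\delta \subset \Lambda_\theta \subset \mathbb{R}$ gives the reverse bound $\dim_H A_\delta \leq 1$, and the equality $\dim_H A_\delta = 1$ follows.

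There is no real obstacle in this argument; the lemma is essentially a bookkeeping consequence of Theorem \ref{ttyp}(\ref{ttyp1}) together with the fact that $\Lambda_\theta$ has non-empty interior in $\mathbb{R}$. The only point that warrants comment is the implicit use of Proposition \ref{phdbdcoin} to upgrade the $\dim_B$ statement of Proposition \ref{pLebtypbox} to the $\dim_H$ statement needed here, but this upgrade is already carried out verbatim in the proof of Theorem \ref{ttyp}(\ref{ttyp1}) and can simply be quoted.
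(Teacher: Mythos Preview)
Your argument is correct and is essentially identical to the paper's proof: both pass to the full-Lebesgue-measure set $\{a:\dim_H E_{\theta,a}=\dim_B E_{\theta,a}=\alpha(\theta)\}$ furnished by Theorem~\ref{ttyp}(\ref{ttyp1}), observe it is contained in $A_\delta$ for $\delta\le\alpha(\theta)$, and note the upper bound is trivial.
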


\begin{proof}
For every $0\leq\delta\leq\alpha(\theta)$ we have
\begin{multline*}
\dim_H\left\{a\in\Lambda_{\theta}:\dim_HE_{\theta,a}\geq\delta\right\}\geq\\\dim_H\left\{a\in\Lambda_{\theta}:\dim_HE_{\theta,a}=\dim_BE_{\theta,a}=\alpha(\theta)\right\}=1.
\end{multline*}
The last equation follows from Theorem \ref{ttyp}(\ref{ttyp1}). The upper bound is trivial.
\end{proof}

\begin{lemma}\label{lpreslim}
Let $P(t)$ be defined as in (\ref{emxpressure}). Then
\[
\lim_{t\rightarrow0+}P'(t)=\alpha(\theta)\log2.
\]
\end{lemma}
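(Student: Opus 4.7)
The plan is to prove $P'_{+}(0):=\lim_{t\to 0^+}P'(t)=\alpha(\theta)\log 2$; this limit exists by the convexity of $P$ (Lemma~\ref{lpresproperty}) and coincides with the right derivative of $P$ at $0$. I will bracket $P'_{+}(0)$ by $\alpha(\theta)\log 2$ from both sides.

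First, I apply Jensen's inequality. Writing $X_n(\xi):=\underline{e}\mx{A}_{\xi_1}\cdots\mx{A}_{\xi_n}\underline{e}$ and using concavity of $\log$ on the uniform average over $\{0,1\}^n$, one has for every $t\in\R$
\[
\frac{1}{n}\log\sum_{\xi\in\{0,1\}^n}X_n(\xi)^t \;=\; \log 2 + \frac{1}{n}\log\Bigl(\frac{1}{2^n}\sum_\xi X_n(\xi)^t\Bigr) \;\geq\; \log 2 + \frac{t}{n\cdot 2^n}\sum_\xi \log X_n(\xi).
\]
Letting $n\to\infty$ and invoking identity~(\ref{ealpha2}) of Proposition~\ref{pLebtypbox}, one deduces $P(t) \geq \log 2 + t\,\alpha(\theta)\log 2$ for every $t\in\R$. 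Since $P(0)=\log 2$, this expresses that $\alpha(\theta)\log 2$ is a subgradient of $P$ at the origin, so $P'_{-}(0) \leq \alpha(\theta)\log 2 \leq P'_{+}(0)$.

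To collapse this sandwich to an equality one needs $P$ to be differentiable at $0$, and this is the genuine difficulty; Lemma~\ref{lpresproperty} only gives differentiability for $t>0$. The key input is the primitivity afforded by Proposition~\ref{pmain}: the existence of a word $(\xi_1,\dots,\xi_{n_0})$ for which $\mx{A}_{\xi_1}\cdots\mx{A}_{\xi_{n_0}}$ is strictly positive places the family $(\mx{A}_0,\mx{A}_1)$ within the subadditive thermodynamic formalism of~\cite[Theorem~3.3]{FL2}. That machinery produces a unique equilibrium state $\mu_t$ for each $t$, depending continuously on $t$, and supplies the derivative formula $P'(t)=\int\phi\,d\mu_t$, where $\phi(\underline{\xi})=\lim_n\tfrac{1}{n}\log X_n$ is the subadditive potential. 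In particular, $P$ is differentiable at every $t\in\R$, and $\mu_0$ is the unique measure of maximal entropy on $\Xi$, namely the uniform Bernoulli $\mathbb{P}$, against which identity~(\ref{ealpha2}) of Proposition~\ref{pLebtypbox} gives $\int\phi\,d\mathbb{P}=\alpha(\theta)\log 2$. Combined with the sandwich from the previous paragraph, this yields the stated limit. The chief obstacle throughout is ruling out a left-sided phase transition of $P$ at the origin; Proposition~\ref{pmain} is the essential ingredient that enables the subadditive thermodynamic machinery to handle this.
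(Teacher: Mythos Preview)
Your Jensen argument for the inequality $P'_{+}(0)\ge \alpha(\theta)\log 2$ is correct and in fact slicker than the paper's contradiction argument for that direction: the line $t\mapsto \log 2 + t\,\alpha(\theta)\log 2$ is a genuine supporting line of $P$ at $0$, and you derive this cleanly from \eqref{ealpha2}.

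The gap is in the opposite inequality. You close the sandwich by asserting that \cite[Theorem~3.3]{FL2} gives a unique equilibrium state and the derivative formula $P'(t)=\int\phi\,d\mu_t$ for \emph{every} $t\in\R$, in particular at $t=0$. But this is precisely what the paper does \emph{not} extract from that reference: Lemma~\ref{lpresproperty}, which cites the same \cite[Theorem~3.3]{FL2}, records differentiability of $P$ only for $t>0$. In the Feng--Lau matrix-product framework the transfer-operator arguments that yield a unique Gibbs/equilibrium state and the differentiability of $P$ are carried out for positive parameters; they do not automatically cover $t=0$ or $t<0$. So as written you have not excluded a corner of $P$ at the origin, and $P'_{-}(0)\le \alpha(\theta)\log 2 \le P'_{+}(0)$ does not collapse.

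The paper circumvents this by a completely different route: it combines the multifractal spectrum formula of Proposition~\ref{cspectra}(\ref{tspectra1}) with Theorem~\ref{ttyp}(\ref{ttyp1}), which together force $\inf_t\{-\alpha(\theta)t+P(t)/\log 2\}=1$, and from this and convexity it derives both inequalities by contradiction. Your approach \emph{can} be rescued without appealing to differentiability at $0$, but you must supply the missing step. One way: from Lemma~\ref{lgibbs} one has $h(\mu_t)=P(t)-tP'(t)\to \log 2$ as $t\to 0^+$, so by upper semicontinuity of entropy any weak-$*$ limit of $\mu_t$ has entropy $\log 2$ and hence equals $\mathbb{P}$; then upper semicontinuity of the subadditive Lyapunov exponent $\mu\mapsto \inf_n \tfrac{1}{n}\int\log X_n\,d\mu$ (an infimum of continuous functionals) gives $\alpha(\theta)\log 2=\Lambda(\mathbb{P})\ge \limsup_{t\to 0^+}\Lambda(\mu_t)=\limsup_{t\to 0^+}P'(t)=P'_{+}(0)$. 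This is the substance your second paragraph is missing.
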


\begin{proof} First, we prove $\lim_{t\rightarrow0+}P'(t)\geq\alpha(\theta)\log2$. Suppose by way of contradiction that that there is a $t'>0$ such that $P'(t')=\alpha(\theta)\log2$ and that for every $0<t<t'$ we have $P'(t)<\alpha(\theta)\log2$. Then
\[
1=\dim_H\left\{a\in\Lambda_{\theta}:\dim_BE_{\theta,a}=\alpha(\theta)\right\}=\inf_t\left\{-\alpha(\theta)t+\frac{P(t)}{\log2}\right\}=-\alpha(\theta)t'+\frac{P(t')}{\log2}.
\]
Therefore $P(0)=\log2$ and $P(t')=\log2\alpha(\theta)t'+\log2$ contradicting our assumption that $P'(t)<\alpha(\theta)\log2$.

We now prove the other inequality  $\lim_{t\rightarrow0+}P'(t)\leq\alpha(\theta)\log2$.  Suppose now that $\lim_{t\rightarrow0+}P'(t)>\delta\log(2)>\alpha(\theta)\log(2)$ for some $\delta$.  Then by Theorem \ref{cspectra}(\ref{tspectra1})  there is a $t^-\leq0$
\begin{multline*}
\dim_H\left\{a\in\Lambda_{\theta}:\dim_BE_{\theta,a}=\delta\right\}=\inf_t\left\{-\delta t+\frac{P(t)}{\log2}\right\}=-\delta t^-+\frac{P(t^-)}{\log2}>\\-\alpha(\theta)t^-+\frac{P(t^-)}{\log2}\geq\inf_t\left\{-\alpha(\theta)t+\frac{P(t)}{\log2}\right\}=\dim_H\left\{a\in\Lambda_{\theta}:\dim_BE_{\theta,a}=\alpha(\theta)\right\}=1,
\end{multline*}
which is a contradiction. (The last equality follows from Theorem \ref{ttyp}(\ref{ttyp1}).)
\end{proof}

Before we prove the case when $\alpha(\theta)<\delta\leq b_{\max}$ we need the so-called Gibbs measure.

\begin{lemma}\label{lgibbs}
For every $t>0$ there is a unique ergodic, left shift invariant Gibbs measure $\mu_t$ on $\Xi$ such that there exists a $C>0$ that for any $(\xi_1,\dots,\xi_k)\in\Xi^{*}$
\[
C^{-1}\leq\frac{\mu_t((\xi_1,\dots,\xi_k))}{\left(\underline{e}\mx{A}_{\xi_1}\cdots\mx{A}_{\xi_k}\underline{e}\right)^te^{-kP(t)}}\leq C.
\]
Moreover,
\begin{equation}\label{lgibbs2}
\dim_H\mu_t=\frac{-tP'(t)+P(t)}{\log2}
\end{equation}
and
\begin{equation}\label{lgibbs3}
\lim_{n\rightarrow\infty}\frac{\log\underline{e}\mx{A}_{\xi_1}\cdots\mx{A}_{\xi_n}\underline{e}}{n\log2}=\frac{P'(t)}{\log2}\text{ for $\mu_t$-a.a. }(\xi_1,\xi_2,\dots).
\end{equation}
\end{lemma}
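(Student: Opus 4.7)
The plan is to deduce the lemma from the Gibbs-measure machinery for quasi-multiplicative matrix products developed by Feng and Lau in \cite{FL2} and Feng in \cite{F2}. The critical input from the present paper is Proposition \ref{pmain}: the word $(\xi^{*}_1,\ldots,\xi^{*}_{n_0})$ with $\mx{A}_{\xi^{*}_1}\cdots \mx{A}_{\xi^{*}_{n_0}}$ strictly positive makes the potential $\Phi_n(\underline{\xi}):=\underline{e}\mx{A}_{\xi_1}\cdots\mx{A}_{\xi_n}\underline{e}$ quasi-multiplicative: inserting $\xi^{*}$ between any two admissible blocks yields a uniform constant $c>0$ with
\[
\Phi_{k+n_0+\ell}(\underline{i}\,\xi^{*}\,\underline{j})\ \geq\ c\,\Phi_k(\underline{i})\,\Phi_\ell(\underline{j}),\qquad \underline{i}\in\{0,1\}^k,\ \underline{j}\in\{0,1\}^\ell,
\]
and this together with the obvious submultiplicativity $\Phi_{k+\ell}(\underline{i}\,\underline{j})\leq\Phi_k(\underline{i})\Phi_\ell(\underline{j})$ is precisely the hypothesis required by \cite[Proposition 1.2]{FL2}. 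That result then furnishes, for each $t>0$, a unique ergodic $\sigma$-invariant Borel probability measure $\mu_t$ on $\Xi$ satisfying the Gibbs estimate of the statement with a constant $C=C(t)>0$.

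Turning to \eqref{lgibbs3}, $\log\Phi_n$ is a subadditive cocycle over the shift, so Kingman's subadditive ergodic theorem applied to the ergodic measure $\mu_t$ gives $\mu_t$-almost sure convergence of $\tfrac{1}{n}\log\Phi_n(\underline{\xi})$ to a constant $\Lambda(t)=\lim_n \tfrac{1}{n}\int\log\Phi_n\,d\mu_t$. The identification $\Lambda(t)=P'(t)$ is standard subadditive thermodynamic formalism: writing $Z_n(t):=\sum_{|\underline{w}|=n}\Phi_n(\underline{w})^t$, one checks
\[
\frac{\partial}{\partial t}\frac{1}{n}\log Z_n(t)\ =\ \frac{1}{n}\sum_{|\underline{w}|=n}\frac{\Phi_n(\underline{w})^t}{Z_n(t)}\log\Phi_n(\underline{w}),
\]
and by the Gibbs property the weights $\Phi_n(\underline{w})^t/Z_n(t)$ are comparable to $\mu_t([\underline{w}])$, so the right-hand side converges to $\Lambda(t)$. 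The differentiability and convexity of $P$ from Lemma \ref{lpresproperty} then allow one to interchange the limit in $n$ with the $t$-derivative and conclude $P'(t)=\Lambda(t)$.

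For \eqref{lgibbs2}, interpret $\dim_H\mu_t$ as the Hausdorff dimension of the push-forward $\mu_t\circ\pi^{-1}$ on $[0,1]$. Taking logarithms in the Gibbs bound and combining with \eqref{lgibbs3} and the Shannon--McMillan--Breiman theorem yields, for $\mu_t$-a.e.\ $\underline{\xi}$,
\[
\lim_{n\to\infty}\frac{-\log\mu_t([\xi_1,\ldots,\xi_n])}{n}\ =\ -t\,P'(t)+P(t)\ =\ h_{\mu_t}(\sigma).
\]
Since the cylinder $[\xi_1,\ldots,\xi_n]$ is mapped by $\pi$ onto a dyadic interval of length $2^{-n}$, the local dimension of $\mu_t\circ\pi^{-1}$ at $\pi(\underline{\xi})$ equals $h_{\mu_t}(\sigma)/\log 2$, and the Volume Lemma used already in the proof of Theorem \ref{ttyp} completes the argument.

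The main technical obstacle is the identification $\Lambda(t)=P'(t)$: while morally an immediate differentiate-the-pressure calculation, it rigorously depends both on the Gibbs-property comparison between empirical averages under $\mu_t$ and the Gibbs weights, and on the $t>0$ differentiability of $P$ from Lemma \ref{lpresproperty}. Once this identification is in hand, the remaining ingredients are routine consequences of Kingman's theorem, Shannon--McMillan--Breiman, and the Volume Lemma.
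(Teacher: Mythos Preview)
Your proposal is correct and follows essentially the same route as the paper: the paper's entire proof is the one-line citation ``follows from \cite[Theorem 3.2]{FL2} and \cite[Proof of Theorem 1.3]{FL2}'', i.e.\ an appeal to the Feng--Lau Gibbs-measure machinery for quasi-multiplicative matrix cocycles, with Proposition~\ref{pmain} supplying the positivity/quasi-multiplicativity hypothesis. You have simply unpacked what that citation contains---the existence of $\mu_t$, the identification $\Lambda(t)=P'(t)$, and the entropy/dimension computation via Shannon--McMillan--Breiman and the Volume Lemma---so there is no substantive difference in approach.
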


The proof of the lemma follows from \cite[Theorem 3.2]{FL2} and \cite[Proof of Theorem 1.3]{FL2}.

\begin{lemma}\label{lgamma2}
For every $\alpha(\theta)<\delta\leq b_{\max}$,
\[
\dim_H\left\{a\in\Lambda_{\theta}:\dim_HE_{\theta,a}\geq\delta\right\}=\inf_{t>0}\left\{-\delta t+\frac{P(t)}{\log2}\right\}.
\]
\end{lemma}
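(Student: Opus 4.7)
The plan is to prove matching upper and lower bounds. The lower bound will use a Gibbs measure furnished by Lemma \ref{lgibbs} together with the box-equals-Hausdorff principle of Proposition \ref{phdbdcoin}, while the upper bound comes from a pressure-based covering argument.

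For the lower bound, I would fix $\delta\in(\alpha(\theta),b_{\max})$. By Lemma \ref{lpresproperty}, $P$ is convex and differentiable on $(0,\infty)$; Lemma \ref{lpreslim} gives $\lim_{t\to 0+}P'(t)=\alpha(\theta)\log 2$, and convexity together with the definition $b_{\max}=\lim_{t\to\infty}P(t)/t$ forces $\lim_{t\to\infty}P'(t)=b_{\max}\log 2$. Hence there exists $t_\delta>0$ with $P'(t_\delta)=\delta\log 2$. Let $\mu_{t_\delta}$ be the associated Gibbs measure from Lemma \ref{lgibbs}. For $\mu_{t_\delta}$-a.e.\ $(\xi_1,\xi_2,\ldots)$, identity (\ref{lgibbs3}) combined with Lemma \ref{lmxbox2} yields $\dim_B E_{\theta,a}=\delta$ for the corresponding $a\in\Lambda_\theta$, while (\ref{lgibbs2}) gives $\dim_H\mu_{t_\delta}=-\delta t_\delta+P(t_\delta)/\log 2$. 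Pushing $\mu_{t_\delta}$ forward to $\Lambda_\theta$ by distributing mass equally over the intervals $I_1,\ldots,I_{p+q}$ yields a measure $\lambda$. Ergodicity and full support of $\mu_{t_\delta}$ imply that $\lambda$ satisfies the hypothesis (\ref{eassumption}) of Proposition \ref{phdbdcoin}, so that $\dim_H E_{\theta,a}=\dim_B E_{\theta,a}=\delta$ for $\lambda$-a.e.\ $a$. The mass distribution principle then gives $\dim_H\{a:\dim_H E_{\theta,a}\geq\delta\}\geq\dim_H\lambda=-\delta t_\delta+P(t_\delta)/\log 2$, and the first-order condition $P'(t_\delta)=\delta\log 2$ identifies the right-hand side with $\inf_{t>0}\{-\delta t+P(t)/\log 2\}$.

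For the upper bound, $\dim_H\leq\overline{\dim}_B$ yields $\{a:\dim_H E_{\theta,a}\geq\delta\}\subseteq\{a:\overline{\dim}_B E_{\theta,a}\geq\delta\}$, and by (\ref{eb0}) I may restrict attention to $\widetilde\Lambda_\theta$, on which Lemma \ref{lmxbox2} applies. For each $\epsilon>0$, a point $a\in\widetilde\Lambda_\theta$ with $\overline{\dim}_B E_{\theta,a}\geq\delta$ lies, for infinitely many $n$, in some interval $I_k^{\xi_1,\ldots,\xi_n}$ whose prefix lies in $G_n^\epsilon=\{(\xi_1,\ldots,\xi_n):\underline{e}\mx{A}_{\xi_1}\cdots\mx{A}_{\xi_n}\underline{e}\geq 2^{n(\delta-\epsilon)}\}$. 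For any $t>0$, Markov's inequality and the definition (\ref{emxpressure}) of the pressure give $\sharp G_n^\epsilon\leq 2^{-nt(\delta-\epsilon)}e^{n(P(t)+o(1))}$. Each cylinder $I_k^{\xi_1,\ldots,\xi_n}$ has diameter $1/(q2^n)$, so summing the $s$-Hausdorff weight over this natural cover at scale $2^{-N}$ produces a convergent geometric series as soon as $s>-t(\delta-\epsilon)+P(t)/\log 2$. Sending $N\to\infty$, then $\epsilon\to 0$, and then infimising over $t>0$ yields the required upper bound.

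The main technical obstacle I anticipate lies in verifying cleanly that the push-forward of $\mu_{t_\delta}$ admits the positive decomposition required by Proposition \ref{phdbdcoin} and is genuinely supported on $\widetilde\Lambda_\theta$, so that Lemma \ref{lmxbox2} can be applied at typical points. The boundary case $\delta=b_{\max}$ is handled by approximation from below with $\delta_n\uparrow b_{\max}$, combined with the monotonicity of $\Gamma$ and the continuity of the Legendre transform at $b_{\max}$.
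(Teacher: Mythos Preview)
Your proposal is correct and follows essentially the same strategy as the paper: the lower bound via the Gibbs measure $\mu_{t_\delta}$ combined with Proposition~\ref{phdbdcoin}, and an upper bound by a pressure-controlled covering. The only cosmetic differences are in the upper bound: the paper bounds $\{a:\underline{\dim}_B E_{\theta,a}\ge\delta\}$ (using $\dim_H\le\underline{\dim}_B$) and controls the Hausdorff sum directly by the Gibbs property $\sum_{(\xi_1,\dots,\xi_k)\in\mathbf{B}_n(\varepsilon)}\mu_t\le 1$ over a disjoint subcover, whereas you pass to the larger set $\{a:\overline{\dim}_B E_{\theta,a}\ge\delta\}$ and use a Markov/cardinality bound on $G_n^\varepsilon$ together with a Borel--Cantelli tail sum; both routes yield the same estimate. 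Your ``technical obstacle'' is not really one: the Gibbs inequality of Lemma~\ref{lgibbs} gives $\mu_t([\xi_1,\dots,\xi_k])>0$ for every cylinder (since $\underline{e}\mx{A}_{\xi_1}\cdots\mx{A}_{\xi_k}\underline{e}\ge 1$ by Lemma~\ref{lmxbasic}), so assumption~(\ref{eassumption}) and the positive decomposition in Proposition~\ref{phdbdcoin} are immediate, exactly as the paper asserts.
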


\begin{proof}
Let us observe by Lemma \ref{lpreslim} that
\[
\inf_{t}\left\{-\delta t+\frac{P(t)}{\log2}\right\}=\inf_{t>0}\left\{-\delta t+\frac{P(t)}{\log2}\right\}.
\]
First, we will prove the upper bound with the method of \cite[Lemma 3.18]{Wi}.
Let us define the following set of intervals:
\[
\mathbf{A}_n(\varepsilon)=\left\{(\xi_1,\dots,\xi_k):k\geq n,\ \delta-\varepsilon\leq\frac{\log\underline{e}\mx{A}_{\xi_1}\cdots\mx{A}_{\xi_k}\underline{e}}{k\log2}\right\}.
\]
It is easy to see that the set
\[
\bigcup_{j=1}^{p+q}\bigcup_{(\xi_1,\dots,\xi_k)\in\mathbf{A}_n(\varepsilon)}I_j^{\xi_1,\dots,\xi_k}
\]
covers the set $G_{\delta}=\left\{a\in\Lambda_{\theta}:\delta\leq\underline{\dim}_BE_{\theta,a}\right\}$. Let $\mathbf{B}_n(\varepsilon)$ be the set of disjoint cylinders of $\mathbf{A}_n(\varepsilon)$ such that
\[
\bigcup_{j=1}^{p+q}\bigcup_{(\xi_1,\dots,\xi_k)\in\mathbf{B}_n(\varepsilon)}I_j^{\xi_1,\dots,\xi_k}=\bigcup_{j=1}^{p+q}\bigcup_{(\xi_1,\dots,\xi_k)\in\mathbf{A}_n(\varepsilon)}I_j^{\xi_1,\dots,\xi_k}.
\]
Then for any $t>0$ and $\varepsilon'>0$ we have
\begin{multline*}
\mathcal{H}_{2^{-n}}^{-\delta t+\frac{P(t)}{\log2}+\varepsilon't}(G_{\delta})\leq\sum_{j=1}^{p+q}\sum_{(\xi_1,\dots,\xi_k)\in\mathbf{B}_n(\varepsilon)}\left|I_j^{\xi_1,\dots,\xi_k}\right|^{-\delta t+\frac{P(t)}{\log2}+\varepsilon't}\leq\\(p+q)2^{(\varepsilon-\varepsilon')n t}\sum_{(\xi_1,\dots,\xi_k)\in\mathbf{B}_n(\varepsilon)}\left(\underline{e}\mx{A}_{\xi_1}\cdots\mx{A}_{\xi_k}\underline{e}\right)^te^{-kP(t)}.
\end{multline*}
By Lemma \ref{lgibbs}
\[
\mathcal{H}_{2^{-n}}^{-\delta t+\frac{P(t)}{\log2}+\varepsilon'}(G_{\delta})\leq C(p+q)2^{(\varepsilon-\varepsilon')nt}\sum_{(\xi_1,\dots,\xi_k)\in\mathbf{B}_n(\varepsilon)}\mu_t((\xi_1,\dots,\xi_k))\leq C(p+q)2^{(\varepsilon-\varepsilon')nt}.
\]
This implies that
\[
\dim_H\left\{a\in\Lambda_{\theta}:\delta\leq\dim_HE_{\theta,a}\right\}\leq\dim_H\left\{a\in\Lambda_{\theta}:\delta\leq\underline{\dim}_BE_{\theta,a}\right\}\leq-\delta t+\frac{P(t)}{\log2}+\varepsilon't
\]
for any $t>0$ and $\varepsilon'>\varepsilon>0$. This proves the upper bound.

Now, we prove the lower bound. By Lemma \ref{lpresproperty}, for every $\alpha(\theta)<\delta< b_{\max}$ there exists a $t>0$ such that $P'(t)=\delta\log2$. By Lemma \ref{lgibbs}, let $\mu_t$ be the Gibbs measure. The measure $\mu_t$ is shift invariant and ergodic. Moreover, by the Gibbs property, $\mu_t$ satisfies the assumption of Proposition \ref{phdbdcoin} and we have
\[
\dim_HE_{\theta,a}=\dim_BE_{\theta,a}\text{ for $\mu_t$-almost all $(\xi_1,\xi_2,\dots)$,}
\]
where $a=1-\frac{k-1}{q}-\frac{1}{q}\sum_{i=1}^{\infty}\frac{\xi_i}{2^i}$ for some $1\leq k\leq p+q$.
Then by (\ref{lgibbs2}) and (\ref{lgibbs3}) we have
\begin{multline*}
\dim_H\left\{a\in\Lambda_{\theta}:\dim_HE_{\theta,a}\geq\delta\right\}\geq\dim_H\left\{a\in\Lambda_{\theta}:\dim_HE_{\theta,a}=\dim_BE_{\theta,a}=\delta\right\}\geq\\\dim_H\mu_t=-t\delta+\frac{P(t)}{\log2}\geq\inf_{t>0}\left\{-t\delta+\frac{P(t)}{\log2}\right\}.
\end{multline*}
If $\delta=b_{\max}$ then
\begin{multline*}
\dim_H\left\{a\in\Lambda_{\theta}:\dim_HE_{\theta,a}\geq b_{\max}\right\}\leq\lim_{\delta\rightarrow b_{\max}+}\dim_H\left\{a\in\Lambda_{\theta}:\dim_HE_{\theta,a}\geq\delta\right\}=\\\lim_{\delta\rightarrow b_{\max}+}\inf_{t>0}\left\{-t\delta+\frac{P(t)}{\log2}\right\}=\inf_{t>0}\left\{-t b_{\max}+\frac{P(t)}{\log2}\right\}=0.
\end{multline*}
In the last two equations we used the continuity property \cite[Theorem 1.1]{F2} and the definition of $b_{\max}$.
\end{proof}

\begin{proof}[Proof of Theorem \ref{tspectra}(\ref{tspectra2})]
The proof is the combination of Lemma \ref{lgamma1} and Lemma \ref{lgamma2}.
\end{proof}

\begin{proof}[Proof of Theorem \ref{tspectra}(\ref{tspectra3})]
By the observation
\begin{multline*}
\dim_H\left\{a\in\Lambda_{\theta}:\underline{\dim}_BE_{\theta,a}\geq\delta\right\}\geq\dim_H\left\{a\in\Lambda_{\theta}:\dim_HE_{\theta,a}=\delta\right\}\geq\\\dim_H\left\{a\in\Lambda_{\theta}:\dim_BE_{\theta,a}=\dim_HE_{\theta,a}=\delta\right\}
\end{multline*}
one can finish the proof as Lemma \ref{lgamma2}.
\end{proof}

\noindent{\bf Acknowledgment.} The authors would like to express their gratitude to the anonymous referees for their reading of the original version as well as their helpful comments.
\\\\
The research of
B\'ar\'any and Simon was supported by OTKA Foundation grant \# K 71693. Ferguson acknowledges support from EPSRC grant EP/I024328/1 and the University of Bristol.

\bibliographystyle{alpha}
\bibliography{sierbib}

\end{document}